\documentclass[12pt, reqno]{amsart}
\usepackage{amsmath, amsthm, amscd, amsfonts, amssymb}

\usepackage{amssymb}
\usepackage{amsfonts}
\usepackage{amsmath}
\usepackage[all,cmtip]{xy}

\usepackage{graphicx}

\usepackage{enumerate}

\usepackage{tikz}
\usetikzlibrary{arrows}
\usetikzlibrary{automata,positioning}
\usetikzlibrary{calc,shapes.arrows}

\textheight 22.5truecm \textwidth 14.5truecm
\setlength{\oddsidemargin}{0.35in}\setlength{\evensidemargin}{0.35in}

\setlength{\topmargin}{-.5cm}

\newtheorem{theorem}{Theorem}[section]
\newtheorem{lemma}[theorem]{Lemma}
\newtheorem{proposition}[theorem]{Proposition}
\newtheorem{corollary}[theorem]{Corollary}
\theoremstyle{definition}
\newtheorem{definition}[theorem]{Definition}
\newtheorem{example}[theorem]{Example}

\theoremstyle{remark}
\newtheorem{remark}[theorem]{Remark}
\numberwithin{equation}{section}

\newcommand{\C}{$C^\ast$}
\newcommand{\G}{\mathcal{G}}

\usepackage{datetime}


\begin{document}

\title[Relative ultragraph algebras and infinite interval maps]{Relative ultragraph algebras and infinite interval maps}
   
 \author{Ben-Hur Eidt, Daniel Gon\c{c}alves, Danilo Royer}

\address{Instituto de Ciências Matemáticas e de Computação, Universidade de São Paulo, São Carlos, 13566-590\\
 Brazil}
   \email{benhur96dt@gmail.com}  
 
\address{Departamento de Matem\'atica,
Universidade Federal de Santa Catarina,
Florian\'opolis, 88040-900\\
 Brazil}
   \email{daemig@gmail.com}  

\address{Departamento de Matem\'atica,
Universidade Federal de Santa Catarina,
Florian\'opolis, 88040-900\\
 Brazil}
   \email{daniloroyer@gmail.com}

\begin{abstract}

We introduce the notion of relative ultragraph $C^*$-algebras and extend classical injectivity criteria for representations—especially those arising from branching systems—to this relative setting.

We next consider interval maps with a countable Markov partition (henceforth, “Markov infinite maps”). Each such map determines a countably infinite transition matrix and thereby an ultragraph $\mathcal{G}$; under suitable hypotheses this yields representations of  relative ultragraph $C^*$-algebras $C^*(\mathcal{G},X)$. For one-dimensional systems whose underlying Markov infinite map admits an escape set, we show that the representation of $C^*(\mathcal{G},X)$ built from point orbits coincides with the representation arising from an appropriate relative branching system.
Finally, we apply our injectivity criteria to these representations, obtaining concrete conditions under which they are faithful.

\end{abstract}

\maketitle

{\it Mathematics Subject Classification}: Primary 46L05, 37E05;
Secondary 37E10.

{\it Keywords}: Relative ultragraph $C^*$-algebra; representations; infinite Markov maps, branching systems.

\section{Introduction}

The study of $C^*$-algebras associated to directed graphs and their generalizations is a central theme at the interface of operator algebras and dynamics. Among these generalizations, \emph{ultragraph} $C^*$-algebras (see, e.g., \cite{Tom3,Katsura, GilleseDaniel}) extend the classical Cuntz--Krieger and Exel--Laca frameworks, while \emph{relative} graph $C^*$-algebras \cite{Tom} provide a flexible setting interpolating between Toeplitz and graph algebras (see also \cite{CarlsenandLarsen, Sims}). 

A complementary perspective is offered by \emph{branching systems}. Originating in work on Cuntz–Krieger algebras, dynamical systems, and wavelets (e.g., \cite{AK, BJ,DuPalle,marcolli}), branching and semibranching systems have since underpinned a broad program connecting symbolic dynamics to concrete representations of graph-like $C^*$-algebras. In the graph setting, a systematic representation theory via branching systems has been developed, including unitary equivalence \cite{Daniel} and further structural developments \cite{DDinicio, GLR2016CMB}. The approach has been extended to related algebraic frameworks (e.g., relative Cohn path algebras \cite{danielcristobal, DDseparated}) and lifted to higher-rank graphs \cite{GLR2018GMJ}, as well as to separable/monic representations and wavelet-type constructions \cite{FGKP2016,FGJKP2020}. These developments make branching systems a natural language for building and analyzing concrete representations.

In this paper, we introduce and develop \emph{relative ultragraph} $C^*$-algebras, and we show that, under a mild additional hypothesis, a large class of them are (canonically) isomorphic to ultragraph $C^*$-algebras (Theorem~\ref{teo iso algebras ultragrafos e relativas}). This parallels the graph case \cite{Tom} and has algebraic analogues for relative Cohn and Leavitt path algebras \cite{Livroversaoalgebrica}. 

Motivated by classical (finite) Markov maps on intervals (cf.\ \cite{CMP,RMP5,RMP10} and the broader Markov-map literature \cite{Rufus,OutroMarkov}), we formulate a notion of \emph{infinite partition Markov interval maps with escape sets}. Each such map $g$ determines a $0$--$1$ incidence matrix and hence an ultragraph $\mathcal{G}$; we then construct representations of suitable relative ultragraph algebras $C^*(\mathcal{G},X)$ inside $B(\ell^2(R_g(x)))$ directly from the dynamics of $g$ (Theorem ~\ref{basic} and Proposition \ref{repMarkov}). A key point is that the dichotomy “return to the domain vs.\ escape’’ for the iterates of $g$ naturally produces the relative relations. 

A second main contribution is to bridge these \emph{Markov-induced} representations with branching systems: under natural hypotheses, every representation built from $g$ coincides with one induced by a $(\mathcal{G},X)$-relative branching system (Theorem~\ref{markovandbsareequal}). This identification lets us import injectivity criteria and uniqueness theorems from the branching-system toolkit to the Markov setting with minimal overhead (see Theorem~\ref{injectivityformarkovreps} and its applications).

Our results thus establish a new bridge between interval dynamics and ultragraph operator algebras, expanding the reach of the relative framework and offering new tools for the analysis of infinite-state dynamical systems.

The paper is organized as follows.
Section~2 defines relative ultragraph $C^*$-algebras and proves the isomorphism with suitable ultragraph algebras. Section~3 adapts uniqueness theorems to the relative context. Section~4 develops relative branching systems and their induced representations, establishing the connection with ultragraph representations. Section~5 introduces infinite Markov interval maps with escape sets and builds the associated representations, proving their equivalence with branching-system representations. Section~6 presents examples illustrating the theory and injectivity criteria in concrete settings.

\section{Relative ultragraph $C^\ast$-algebras}
\label{secgraphalgs}

\subsection{Background on ultragraphs and its C*-algebras}

In this subsection we recall the main definitions and relevant results regarding ultragraphs, as introduced in \cite{Tom2}.

\begin{definition}\label{def of ultragraph}
An \emph{ultragraph} is a quadruple $\mathcal{G}=(\G^0, \mathcal{G}^1, r,s)$ consisting of two countable sets $\G^0, \mathcal{G}^1$, a map $s:\mathcal{G}^1 \to \G^0$, and a map $r:\mathcal{G}^1 \to P(\G^0)$, where $P(\G^0)$ stands for the power set of $\G^0$.
\end{definition}



Before we define the C*-algebra associated to an ultragraph we need the following notion.

\begin{definition}\label{def of mathcal{G}^0}
Let $\mathcal{G}$ be an ultragraph. Define $\mathcal{E}$ to be the smallest subset of $P(G^0)$ that contains $\{v\}$ for all $v\in G^0$, contains $r(e)$ for all $e\in \mathcal{G}^1$, and is closed under finite unions and finite intersections. The collection $\mathcal{E}$ is called the boolean algebra associated to the ultragraph $\G$.
\end{definition}


\begin{definition}\label{def of C^*(mathcal{G})}
Let $\mathcal{G}$ be an ultragraph. The \emph{ultragraph algebra} $C^*(\mathcal{G})$ is the universal $C^*$-algebra generated by a family of partial isometries with orthogonal ranges $\{s_e:e\in \mathcal{G}^1\}$ and a family of projections $\{p_A:A\in \mathcal{E}\}$ satisfying
\begin{enumerate}
\item\label{p_Ap_B=p_{A cap B}}  $p_\emptyset=0,  p_Ap_B=p_{A\cap B},  p_{A\cup B}=p_A+p_B-p_{A\cap B}$, for all $A,B\in \mathcal{E}$;
\item\label{s_e^*s_e=p_{r(e)}}$s_e^*s_e=p_{r(e)}$, for all $e\in \mathcal{G}^1$;
\item $s_es_e^*\leq p_{s(e)}$ for all $e\in \mathcal{G}^1$; and
\item\label{CK-condition} $p_v=\sum\limits_{s(e)=v}s_es_e^*$ whenever $0<\vert s^{-1}(v)\vert< \infty$.
\end{enumerate}
\end{definition}


\subsection{Relative ultragraph C*-algebras}

Let $\G$ be an ultragraph and denote by $Reg(\G)$ the set of regular vertices, that is, the set of vertices $v$ such that $0<\vert s^{-1}(v)\vert< \infty$. We have the following definition.

\begin{definition}\label{relative}
We say that  a pair $(\mathcal{G},X)$ is a relative ultragraph if $\G$ is an ultragraph and $X$ is any subset of $Reg(\G)$. The relative ultragraph \C-algebra $C^*(\G,X)$ is the universal 
$C^*$-algebra generated by a family of partial isometries with orthogonal ranges $\{s_e:e\in \mathcal{G}^1\}$ and a family of projections $\{p_A:A\in \mathcal{E}\}$ satisfying the conditions (1) to (3) of Definition~\ref{def of C^*(mathcal{G})} and satisfying, for every vertex $v\in X$, condition (4) of Definition~\ref{def of C^*(mathcal{G})}.
\end{definition}

Notice that for a given relative ultragraph $(\mathcal{G}, X)$, the ultragraph 
$C^*$-algebra $C^*(\mathcal{G})$ can be realized as a quotient of the relative 
ultragraph $C^*$-algebra $C^*(\mathcal{G}, X)$. We now describe a construction 
that associates to a relative ultragraph an ordinary ultragraph in such a way 
that, under suitable conditions, the corresponding $C^*$-algebras are isomorphic. 
This construction parallels the one for graphs presented in 
\cite[Theorem~3.7]{Tom}.




\begin{definition}\label{ultragrap GX}
Let $(\mathcal{G},X)$ be a relative ultragraph, and set 
\[
Y := \mathrm{Reg}(\mathcal{G}) \setminus X, 
\quad 
Y' := \{\, y' \mid y \in Y \,\},
\]
where $Y'$ denotes a disjoint copy of $Y$.  
We define the ultragraph $\mathcal{G}_X = (\mathcal{G}^0_X, \mathcal{G}^1_X, r_X, s_X)$ as follows:
\[
\mathcal{G}_X^0 := \mathcal{G}^0 \cup \{\, v' \mid v \in Y \,\},
\ \ \ \mathcal{G}_X^1 := \mathcal{G}^1 \cup \{\, e' \mid e \in \mathcal{G}^1 \text{ and } r(e) \cap Y \neq \emptyset \,\}.
\]
The maps $s_X$ and $r_X$, extending $s$ and $r$ to $\mathcal{G}_X^1$, are defined in the elements of $\mathcal{G}_X^1$ that  are not contained in $\mathcal{G}^1$ by
\[
s_X(e') := s(e), 
\quad 
r_X(e') := (r(e) \cap Y)' = \{\, w' \mid w \in r(e) \cap Y \,\}.
\]
\end{definition}

\begin{remark}
If $A,B \subseteq Y$, then 
\[
(A \cap B)' = A' \cap B',
\quad 
(A \cup B)' = A' \cup B'.
\]
\end{remark}

We shall use the notation
\[
C := \{\, e' \mid e \in \mathcal{G}^1 \text{ and } r(e) \cap Y \neq \emptyset \,\},
\]
so that $\mathcal{G}_X^1 = \mathcal{G}^1 \cup C$.  
Moreover, let $\mathcal{E}$ denote the Boolean algebra of the ultragraph $\mathcal{G}$, and let $\mathcal{E}_X$ denote the Boolean algebra of the ultragraph $\mathcal{G}_X$.  
The following lemma describes the relation between these two Boolean algebras.

\begin{lemma}
Let $(\mathcal{G}, X)$ be a relative ultragraph, and set $Y := \mathrm{Reg}(\mathcal{G}) \setminus X$.
\begin{enumerate}
    \item For every $A \in \mathcal{E}$ we have $(A \cap Y)' \in \mathcal{E}_X$.
    \item For every $B \in \mathcal{E}_X$ we have $B \cap \mathcal{G}^0 \in \mathcal{E}$.
\end{enumerate}
\end{lemma}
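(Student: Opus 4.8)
The plan is to exploit the inductive description of the Boolean algebras $\mathcal{E}$ and $\mathcal{E}_X$ as the \emph{smallest} subsets of $P(\mathcal{G}^0)$ and $P(\mathcal{G}_X^0)$, respectively, that contain all singletons and all range sets and that are closed under finite unions and finite intersections. For each of the two items I would introduce the collection of elements satisfying the asserted property and show that it contains the generators and is stable under the two Boolean operations; minimality then forces it to coincide with the whole algebra.

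For item (1), set $\mathcal{F} := \{A \in \mathcal{E} : (A \cap Y)' \in \mathcal{E}_X\}$ and show $\mathcal{F} = \mathcal{E}$. On the generators: for a singleton $\{v\}$ with $v \in Y$ one has $(\{v\} \cap Y)' = \{v'\}$, a singleton of $\mathcal{G}_X^0$, hence in $\mathcal{E}_X$; if $v \notin Y$ the set is empty, which lies in $\mathcal{E}_X$. For a range set $r(e)$, either $r(e) \cap Y \neq \emptyset$, in which case $e' \in C$ and $(r(e) \cap Y)' = r_X(e')$ is a generator of $\mathcal{E}_X$, or $r(e) \cap Y = \emptyset$ and the set is again empty. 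Closure under the two operations is immediate from the identities $(A \cap B) \cap Y = (A \cap Y) \cap (B \cap Y)$ and $(A \cup B) \cap Y = (A \cap Y) \cup (B \cap Y)$ together with the preceding Remark, which lets the prime commute with $\cap$ and $\cup$; since $\mathcal{E}_X$ is closed under both operations, so is $\mathcal{F}$.

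For item (2), set $\mathcal{F}' := \{B \in \mathcal{E}_X : B \cap \mathcal{G}^0 \in \mathcal{E}\}$ and show $\mathcal{F}' = \mathcal{E}_X$. The generators split into two types. Those coming from $\mathcal{G}$, namely the singletons $\{v\}$ with $v \in \mathcal{G}^0$ and the range sets $r_X(e) = r(e)$ with $e \in \mathcal{G}^1$, are already subsets of $\mathcal{G}^0$, so intersecting with $\mathcal{G}^0$ returns the set itself, which lies in $\mathcal{E}$. The new generators, namely the singletons $\{v'\}$ with $v \in Y$ and the range sets $r_X(e') = (r(e) \cap Y)'$ with $e' \in C$, are contained in $Y' \subseteq \mathcal{G}_X^0 \setminus \mathcal{G}^0$, so their intersection with $\mathcal{G}^0$ is empty. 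Closure under unions and intersections then follows from distributivity of $\,\cdot \cap \mathcal{G}^0$ over $\cup$ and $\cap$, exactly as before.

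In both arguments the only point needing care is the status of the empty set: one must know that $\emptyset$ belongs to each Boolean algebra so that the degenerate cases ($v \notin Y$, $r(e) \cap Y = \emptyset$, or a generator disjoint from $\mathcal{G}^0$) land back inside $\mathcal{E}$ or $\mathcal{E}_X$. This is automatic from the relation $p_\emptyset = 0$ built into the definition, equivalently from the fact that $\emptyset$ arises as the intersection of two disjoint singletons. Beyond this bookkeeping there is no genuine obstacle: the result is a structural induction whose entire content is the generator check together with the elementary set-theoretic identities supplied by the Remark.
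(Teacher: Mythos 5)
Your proof is correct, but it follows a genuinely different route from the paper's. You proceed by structural induction, exploiting minimality: for each item you form the collection of sets with the desired property, verify that it contains the generators (singletons and ranges) and is closed under finite unions and intersections, and conclude that it exhausts the whole Boolean algebra. The paper instead invokes the normal form of \cite[Lemma~2.12]{Tom2} --- every element of $\mathcal{E}$ (resp.\ $\mathcal{E}_X$) is a finite union of finite intersections of ranges together with a finite vertex set --- and pushes that normal form through $(\cdot)'$, respectively through intersection with $\mathcal{G}^0$, by direct computation. Your argument is more elementary and self-contained: it needs no external structure lemma and avoids the index bookkeeping with the sets $X_i$, $F$ and $Q$ (where, incidentally, the paper's replacement of $\bigcap_{e\in X_i}(r(e)\cap Y)'$ by $\bigcap_{e\in X_i\cap Q} r(e')$ is legitimate only after discarding the terms with $X_i \not\subseteq Q$, which vanish). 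What the paper's computation buys is an explicit normal form inside $\mathcal{E}_X$, which is precisely the tool reused immediately afterwards to prove the decomposition results (Lemma~\ref{decomposition1} and Corollary~\ref{decomposition2}); your induction establishes membership but produces no such formula. One small point of care in your write-up: the claim that $\emptyset$ arises as the intersection of two disjoint singletons requires at least two vertices, so in degenerate cases one should simply invoke the standing convention that $\emptyset$ belongs to these Boolean algebras (as the relation $p_\emptyset = 0$ already presupposes), which you do also mention.
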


\begin{proof}
\begin{enumerate}
    \item Let $A \in \mathcal{E}$. By \cite[Lemma~2.12]{Tom2}, we may write
    \[
    A = \bigcup_{i=1}^N \left( \bigcap_{e \in X_i} r(e) \right) \cup F,
    \]
    where $F \subseteq \mathcal{G}^0$ is finite and each $X_i \subseteq \mathcal{G}^1$ is finite.  
    Using Remark~\ref{ultragrap GX}, we obtain
    \begin{align*}
        (A \cap Y)' 
        &= \bigcup_{i=1}^N \left( \bigcap_{e \in X_i} (r(e) \cap Y)' \right) \cup (F \cap Y)' \\
        &= \bigcup_{i=1}^N \left( \bigcap_{e \in X_i \cap Q} r(e') \right) \cup (F \cap Y)',
    \end{align*}
    where 
    \[
    Q := \{\, f \in \mathcal{G}^1 : r(f) \cap Y \neq \emptyset \,\}.
    \]
    (Note that if $f \notin Q$, then $r(f) \cap Y = \emptyset$.)  
    Since each $X_i \cap Q$ is finite and $(F \cap Y)'$ is finite, \cite[Lemma~2.12]{Tom2} implies that $(A \cap Y)' \in \mathcal{E}_X$.

    \item Let $B \in \mathcal{E}_X$. By \cite[Lemma~2.12]{Tom2}, we may write
    \[
    B = \bigcup_{i=1}^N \left( \bigcap_{f \in X_i} r_X(f) \right) \cup F,
    \]
    where $F \subseteq \mathcal{G}^0_X$ is finite and each $X_i \subseteq \mathcal{G}_X^1$ is finite.  
    Intersecting with $\mathcal{G}^0$ gives
    \[
    B \cap \mathcal{G}^0 = \bigcup_{i=1}^N \left( \bigcap_{f \in X_i} (r_X(f) \cap \mathcal{G}^0) \right) \cup (F \cap \mathcal{G}^0).
    \]
    Recall that $\mathcal{G}_X^1 = \mathcal{G}^1 \cup C$.  
    If $f \in C$, then $r_X(f) \cap \mathcal{G}^0 = \emptyset$, while if $f \in \mathcal{G}^1$ we have $r_X(f) = r(f) = r(f) \cap \mathcal{G}^0$.  
    Hence
    \[
    B \cap \mathcal{G}^0 = \bigcup_{i=1}^N \left( \bigcap_{f \in X_i \cap \mathcal{G}^1} r(f) \right) \cup (F \cap \mathcal{G}^0).
    \]
    By \cite[Lemma~2.12]{Tom2}, it follows that $B \cap \mathcal{G}^0 \in \mathcal{E}$.
\end{enumerate}
\end{proof}

In the following result we construct a natural $*$-homomorphism from the relative ultragraph algebra 
$C^*(\mathcal{G},X)$ into the ultragraph algebra $C^*(\mathcal{G}_X)$. We denote by $s_e$ and $p_A$ the canonical generators of 
$C^*(\mathcal{G},X)$, where $e \in \mathcal{G}^1$ and $A \in \mathcal{E}$, 
and by $S_f$ and $P_B$ the canonical generators of $C^*(\mathcal{G}_X)$, 
where $f \in \mathcal{G}_X^1$ and $B \in \mathcal{E}_X$.

\begin{proposition}
    \label{esfriou, vai dar tainha}
Let $(\mathcal{G}, X)$ be a relative ultragraph. Then there exists a 
$*$-homomorphism 
\[
\phi: C^*(\mathcal{G},X) \longrightarrow C^*(\mathcal{G}_X)
\]
such that, for each $B \in \mathcal{E}$,
\[
\phi(p_B) = P_B + P_{(B \cap Y)'}
\]
and, for each $e \in \mathcal{G}^1$,
\[
\phi(s_e) =
\begin{cases}
    S_e, & \text{if } r(e) \cap Y = \emptyset, \\[6pt]
    S_e + S_{e'}, & \text{if } r(e) \cap Y \neq \emptyset.
\end{cases}
\]
\end{proposition}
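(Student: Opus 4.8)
The plan is to construct $\phi$ via the universal property of $C^*(\mathcal{G},X)$: I will exhibit a family in $C^*(\mathcal{G}_X)$ satisfying the defining relations of $C^*(\mathcal{G},X)$ and read off $\phi$ from universality. Concretely, set $q_A := P_A + P_{(A\cap Y)'}$ for $A\in\mathcal{E}$ and
\[
t_e := \begin{cases} S_e, & r(e)\cap Y=\emptyset,\\ S_e+S_{e'}, & r(e)\cap Y\neq\emptyset. \end{cases}
\]
By part (1) of the preceding Lemma, $(A\cap Y)'\in\mathcal{E}_X$, so each $q_A$ is well defined; since $A\subseteq\mathcal{G}^0$ and $(A\cap Y)'\subseteq Y'$ are disjoint, $P_A\perp P_{(A\cap Y)'}$ and $q_A$ is a projection. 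I must then verify conditions (1)--(3) of Definition~\ref{def of C^*(mathcal{G})} for all $A,B\in\mathcal{E}$, $e\in\mathcal{G}^1$, together with condition (4) at every $v\in X$.

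For condition (1) I would expand $q_Aq_B$ and $q_A+q_B-q_{A\cap B}$, using $P_AP_B=P_{A\cap B}$ in $C^*(\mathcal{G}_X)$ and the Remark that priming commutes with finite unions and intersections of subsets of $Y$; the disjointness of $\mathcal{G}^0$ and $Y'$ kills the cross terms $P_AP_{(B\cap Y)'}$, yielding $q_Aq_B=q_{A\cap B}$ and $q_{A\cup B}=q_A+q_B-q_{A\cap B}$. For the partial-isometry relations I first record the standard fact that orthogonal ranges of the $S_f$ force $S_f^*S_g=0$ for $f\neq g$. In the case $r(e)\cap Y\neq\emptyset$ this gives $t_e^*t_e=S_e^*S_e+S_{e'}^*S_{e'}=P_{r(e)}+P_{(r(e)\cap Y)'}=q_{r(e)}$, a projection, so $t_e$ is a partial isometry and condition (2) holds (the other case is immediate).

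A point needing a little care is that the cross terms $S_eS_{e'}^*$ vanish, so that $t_et_e^*=S_eS_e^*+S_{e'}S_{e'}^*$. This follows from $S_e=S_eP_{r(e)}$ together with $r(e)\cap(r(e)\cap Y)'=\emptyset$, which give $S_eP_{(r(e)\cap Y)'}=0$ and hence $(S_eS_{e'}^*)(S_eS_{e'}^*)^*=S_eP_{(r(e)\cap Y)'}S_e^*=0$. With this, orthogonality of the ranges of $\{t_e\}_{e\in\mathcal{G}^1}$ reduces to the pairwise orthogonality of the ranges of the distinct edges $e,e',f,f'$ of $\mathcal{G}_X$ (for $e\neq f$), and condition (3) follows because $S_eS_e^*\leq P_{s(e)}$ and $S_{e'}S_{e'}^*\leq P_{s_X(e')}=P_{s(e)}$, being orthogonal, sum to something $\leq P_{s(e)}\leq q_{s(e)}$.

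The main obstacle is condition (4) at $v\in X$. Here I would first confirm that $v$ remains regular in $\mathcal{G}_X$: since $v\in X\subseteq\mathrm{Reg}(\mathcal{G})$ we have $0<|s^{-1}(v)|<\infty$, and $s_X^{-1}(v)$ consists of the edges $e$ with $s(e)=v$ together with the primed edges $e'$ with $s(e)=v$ and $r(e)\cap Y\neq\emptyset$, a finite nonempty set. As $v\notin Y$ we have $q_v=P_v$, and the Cuntz--Krieger relation for $\mathcal{G}_X$ at $v$ gives
\[
P_v=\sum_{s_X(f)=v}S_fS_f^*=\sum_{s(e)=v}S_eS_e^*+\sum_{\substack{s(e)=v\\ r(e)\cap Y\neq\emptyset}}S_{e'}S_{e'}^*=\sum_{s(e)=v}t_et_e^*,
\]
which is exactly condition (4) for the family $\{t_e\},\{q_A\}$. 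Having checked conditions (1)--(3) everywhere and (4) on $X$, the universal property of $C^*(\mathcal{G},X)$ produces the desired $*$-homomorphism $\phi$ with $\phi(p_A)=q_A$ and $\phi(s_e)=t_e$.
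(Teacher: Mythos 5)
Your proposal is correct and takes essentially the same route as the paper's proof: exhibit the candidate family $\{q_A,t_e\}$ in $C^*(\mathcal{G}_X)$, verify relations (1)--(3) of Definition~\ref{def of C^*(mathcal{G})} together with (4) at each $v\in X$, and invoke the universal property of $C^*(\mathcal{G},X)$. You are in fact slightly more careful than the paper on two points it leaves implicit: the vanishing of the cross terms $S_eS_{e'}^*$ (via $S_e=S_eP_{r(e)}$ and $r(e)\cap(r(e)\cap Y)'=\emptyset$), and the check that $v\in X$ remains a regular vertex of $\mathcal{G}_X$, which is needed to apply the Cuntz--Krieger relation in $C^*(\mathcal{G}_X)$ at $v$.
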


\begin{proof}
We verify that 
\[
\{\phi(p_B): B \in \mathcal{E}\}\ \cup\ \{\phi(s_e): e \in \mathcal{G}^1\}
\]
satisfies the defining relations of the universal $C^*$-algebra $C^*(\mathcal{G},X)$. 
By the universal property, $\phi$ then extends to a $*$-homomorphism 
$C^*(\mathcal{G},X)\to C^*(\mathcal{G}_X)$.

\begin{enumerate}
\item \textbf{Boolean relations.}
Clearly $\phi(p_{\emptyset})=P_{\emptyset}=0$. For $A,B\in\mathcal{E}$,
\begin{align*}
\phi(p_A)\phi(p_B)
&=(P_A+P_{(A\cap Y)'})(P_B+P_{(B\cap Y)'})\\
&=P_AP_B+P_AP_{(B\cap Y)'}+P_{(A\cap Y)'}P_B+P_{(A\cap Y)'}P_{(B\cap Y)'}\\
&=P_{A\cap B}+P_{(A\cap B\cap Y)'}\\
&=\phi(p_{A\cap B}),
\end{align*}
so $\phi(p_A)\phi(p_B)=\phi(p_{A\cap B})$. Similarly,
\[
\phi(p_A)+\phi(p_B)-\phi(p_{A\cap B})=\phi(p_{A\cup B}).
\]

\item \textbf{Range relations.}
We show $\phi(p_{r(e)})=\phi(s_e)^*\phi(s_e)$ for all $e\in\mathcal{G}^1$.

If $r(e)\cap Y=\emptyset$, then $\phi(s_e)=S_e$ and
\[
\phi(s_e)^*\phi(s_e)=S_e^*S_e=P_{r(e)}=\phi(p_{r(e)}).
\]
If $r(e)\cap Y\neq\emptyset$, then
\begin{align*}
\phi(p_{r(e)})&=P_{r(e)}+P_{(r(e)\cap Y)'}\\
&=P_{r_X(e)}+P_{r_X(e')}\\
&=S_e^*S_e+S_{e'}^*S_{e'},
\end{align*}
while
\begin{align*}
\phi(s_e)^*\phi(s_e)
&=(S_e+S_{e'})^*(S_e+S_{e'})\\
&=S_e^*S_e+S_e^*S_{e'}+S_{e'}^*S_e+S_{e'}^*S_{e'}\\
&=S_e^*S_e+S_{e'}^*S_{e'},
\end{align*}
since $S_f^*S_g=0$ whenever $f\ne g\in\mathcal{G}_X^1$. Hence $\phi(p_{r(e)})=\phi(s_e)^*\phi(s_e)$.

\item \textbf{Source projection relation.}
For every $e\in\mathcal{G}^1$,
\[
\phi(p_{s(e)})\,\phi(s_e)\phi(s_e)^*=\phi(s_e)\phi(s_e)^*.
\]
Indeed, write $\phi(p_{s(e)})=P_{s(e)}+\mathbf{1}_{\{s(e)\in Y\}}\,P_{s(e)'}$.  
If $r(e)\cap Y=\emptyset$, then $\phi(s_e)\phi(s_e)^*=S_eS_e^*\le P_{s(e)}$ and $P_{s(e)'}S_eS_e^*=0$, so the equality holds.  
If $r(e)\cap Y\neq\emptyset$, then $\phi(s_e)\phi(s_e)^*=S_eS_e^*+S_{e'}S_{e'}^*$ and each term is $\le P_{s(e)}$, while again $P_{s(e)'}$ annihilates them. Thus the relation holds in all cases.

\item \textbf{Cuntz--Krieger relation at $v\in X$.}
Fix $v\in X$. Since $\{v\}\cap Y=\emptyset$, we have $\phi(p_v)=P_v$. Moreover,
\begin{align*}
P_v
&=\sum_{\{f\in\mathcal{G}_X^1:\ s_X(f)=v\}} S_fS_f^* = \sum_{\{e\in\mathcal{G}^1:\ s(e)=v\}} S_eS_e^*
\;+\;\sum_{\{e'\in\mathcal{G}_X^1\setminus\mathcal{G}^1:\ s_X(e')=v\}} S_{e'}S_{e'}^*\\
&= \sum\limits_{ \{ e \in \G^1 \ | \ r(e) \cap Y = \emptyset \textrm{ and } s(e) = v \} } S_e S_e^* + \sum\limits_{ \{ e \in \G^1 \ | \ r(e) \cap Y \neq \emptyset \textrm{ and } s(e) = v \} } S_e S_e^* + S_{e'}S_{e'}^*.
\end{align*}
For $e$ with $s(e)=v$ we have, by direct computation,
\[
\phi(s_e)\phi(s_e)^*=
\begin{cases}
S_eS_e^*, & r(e)\cap Y=\emptyset,\\[4pt]
S_eS_e^*+S_{e'}S_{e'}^*, & r(e)\cap Y\neq\emptyset.
\end{cases}
\]
Summing over all $e$ with source $v$ gives
\[
P_v=\sum_{\{e\in\mathcal{G}^1:\ s(e)=v\}} \phi(s_e)\phi(s_e)^*,
\]
which is the required Cuntz--Krieger relation at $v$.
\end{enumerate}

Therefore all defining relations of $C^*(\mathcal{G},X)$ are satisfied, and by universality $\phi$ extends to a $*$-homomorphism
\[
\phi: C^*(\mathcal{G},X)\longrightarrow C^*(\mathcal{G}_X).
\]
\end{proof}

The next lemma will be used later to show that, under an additional hypothesis, the
homomorphism $\phi$ constructed above is an isomorphism.

\begin{lemma}\label{decomposition1}
Let $(\mathcal{G},X)$ be a relative ultragraph and set $Y:=\mathrm{Reg}(\mathcal{G})\setminus X$.
If $Z \in \mathcal{E}_X$, then there exist $A,B \in \mathcal{E}$ such that
\[
Z \;=\; A \;\cup\; (B \cap Y)'.
\]
Moreover, $A$ and $B \cap Y$ are uniquely determined.
\end{lemma}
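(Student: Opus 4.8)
The plan is to exploit the disjoint decomposition $\mathcal{G}_X^0 = \mathcal{G}^0 \sqcup Y'$ of the vertex set of $\mathcal{G}_X$, so that any $Z \in \mathcal{E}_X$ splits canonically as $Z = (Z \cap \mathcal{G}^0) \cup (Z \cap Y')$, and then to identify each piece with the desired shape. The first summand will give $A$, and the second will be shown to be a prime $(B\cap Y)'$.

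For existence, I would first set $A := Z \cap \mathcal{G}^0$; by part (2) of the preceding lemma, $A \in \mathcal{E}$. It then remains to realize $Z \cap Y'$ as $(B \cap Y)'$ for some $B \in \mathcal{E}$. Here I would invoke \cite[Lemma~2.12]{Tom2} to write $Z = \bigcup_{i=1}^N \big(\bigcap_{f \in X_i} r_X(f)\big) \cup F$ with each $X_i \subseteq \mathcal{G}_X^1$ finite and $F \subseteq \mathcal{G}_X^0$ finite, and intersect with $Y'$. The crucial observation is that $r_X$ sends unprimed edges into $\mathcal{G}^0$ and primed edges into $Y'$, two disjoint sets; hence any intersection term $\bigcap_{f \in X_i} r_X(f)$ for which $X_i$ contains an unprimed edge is contained in $\mathcal{G}^0$ and is eliminated upon intersecting with $Y'$, while each $X_i$ consisting solely of primed edges $e'$ contributes $\bigcap_{e'}(r(e) \cap Y)'$. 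Using the identities $(S \cap T)' = S' \cap T'$ and $(S \cup T)' = S' \cup T'$ for subsets of $Y$, each surviving intersection rewrites as $\big((\bigcap_e r(e)) \cap Y\big)'$, and the finite part $F \cap Y'$ as $G'$ with $G \subseteq Y$ finite. Pulling the prime outside the union then exhibits $Z \cap Y' = (B \cap Y)'$, where $B$ is the union of the relevant finite intersections of ranges together with $G$; since each such intersection lies in $\mathcal{E}$ and finite unions stay in $\mathcal{E}$, we get $B \in \mathcal{E}$. Combining the two pieces gives $Z = A \cup (B \cap Y)'$.

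For uniqueness, suppose $Z = A_1 \cup (B_1 \cap Y)' = A_2 \cup (B_2 \cap Y)'$ with all sets drawn from $\mathcal{E}$. Since $A_i \subseteq \mathcal{G}^0$ and $(B_i \cap Y)' \subseteq Y'$ are separated by $\mathcal{G}^0 \cap Y' = \emptyset$, intersecting with $\mathcal{G}^0$ forces $A_1 = Z \cap \mathcal{G}^0 = A_2$, and intersecting with $Y'$ forces $(B_1 \cap Y)' = Z \cap Y' = (B_2 \cap Y)'$; injectivity of $v \mapsto v'$ then yields $B_1 \cap Y = B_2 \cap Y$. This is exactly the asserted uniqueness of $A$ and of $B \cap Y$ (note that $B$ itself is not unique).

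I expect the main obstacle to be the bookkeeping in the existence step: verifying that intersecting the \cite[Lemma~2.12]{Tom2} normal form with $Y'$ removes precisely the terms involving unprimed edges and leaves a set that is genuinely of the form $(B \cap Y)'$ with $B \in \mathcal{E}$. One should also keep an eye on degenerate possibilities (such as index sets $X_i$ mixing primed and unprimed edges, which force $\bigcap_{f\in X_i} r_X(f)=\emptyset$, or empty finite parts) to be sure every surviving piece stays inside $\mathcal{E}$.
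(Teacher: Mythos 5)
Your proposal is correct and follows essentially the same route as the paper: both rest on the normal form of \cite[Lemma~2.12]{Tom2}, the observation that mixed primed/unprimed intersections are empty (or vanish against $Y'$), the identities for $(\cdot)'$, and the same uniqueness argument via intersecting with $\mathcal{G}^0$ and $Y'$. The only cosmetic difference is that you obtain $A = Z \cap \mathcal{G}^0 \in \mathcal{E}$ by citing part (2) of the preceding lemma, whereas the paper builds $A$ explicitly from the normal form; the two sets coincide.
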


\begin{proof}
By \cite[Lemma~2.12]{Tom2}, we can write
\[
Z \;=\; \bigcup_{i=1}^N \left(\;\bigcap_{f \in X_i} r_X(f)\;\right) \;\cup\; F,
\]
where each $X_i \subseteq \mathcal{G}_X^1$ is finite and $F\subseteq \mathcal{G}_X^0$ is finite.
We may assume (discarding empty intersections) that for each $i$ either
$X_i \subseteq \mathcal{G}^1$ or $X_i \subseteq C$, where
\[
C \;:=\; \{\, e' \mid e \in \mathcal{G}^1,\ r(e)\cap Y \neq \emptyset \,\}.
\]
Indeed, if $X_i$ contains both some $f\in\mathcal{G}^1$ and some $e'\in C$, then
$r_X(f)=r(f)\subseteq \mathcal{G}^0$ while $r_X(e')=(r(e)\cap Y)'\subseteq Y'$ are disjoint, so
$\bigcap_{g\in X_i} r_X(g)=\emptyset$.

Reindex so that for some $K\in\{0,\dots,N\}$ we have
$X_1,\dots,X_K \subseteq \mathcal{G}^1$ and $X_{K+1},\dots,X_N \subseteq C$.
Set
\[
A \;:=\; \bigcup_{i=1}^K \left(\;\bigcap_{f \in X_i} r(f)\;\right)\ \cup\ (F \cap \mathcal{G}^0).
\]
Then $A\in \mathcal{E}$, because it is obtained from finite vertex sets in $\mathcal{G}^0$ and ranges
$r(e)$ with $e\in\mathcal{G}^1$ using finite unions and intersections.

For the remaining part, note that for $i\ge K+1$ and $X_i\subseteq C$ we can write
$X_i=\{e'_1,\dots,e'_m\}$ with $e_j\in\mathcal{G}^1$, so using the properties of $(\cdot)'$,
\[
\bigcap_{e'\in X_i} r_X(e') \;=\; 
\bigcap_{e'\in X_i} (r(e)\cap Y)' \;=\;
\left(\;\bigcap_{e'\in X_i} (r(e)\cap Y)\;\right)'.
\]
Hence
\begin{align*}
Z \setminus A
&= \bigcup_{i=K+1}^N \left(\;\bigcap_{e'\in X_i} r_X(e')\;\right) \ \cup\ (F\cap Y') \\
&= \left(\ \bigcup_{i=K+1}^N \bigcap_{e'\in X_i} (r(e)\cap Y)\ \right)'\ \cup\ (F\cap Y') \\
&= \left(\ \Big(\;\bigcup_{i=K+1}^N \bigcap_{e'\in X_i} r(e)\;\Big)\ \cap Y\ \right)'\ \cup\ (F\cap Y').
\end{align*}
Define
\[
B_1 \;:=\; \bigcup_{i=K+1}^N \bigcap_{e'\in X_i} r(e) \ \in\ \mathcal{E}.
\]
Since $F\cap Y'$ is finite, write $F\cap Y'=\{v_1',\dots,v_n'\}$ and set
\[
B \;:=\; B_1 \cup \{v_1,\dots,v_n\} \ \in\ \mathcal{E}.
\]
Then $(B\cap Y)'=(B_1\cap Y)'\cup \{v_1',\dots,v_n'\}$, hence
$Z\setminus A=(B\cap Y)'$. As $A\subseteq \mathcal{G}^0$ and $(B\cap Y)'\subseteq Y'$ are
disjoint, we conclude $Z=A\cup(B\cap Y)'$.

For uniqueness, suppose $Z=A_1\cup (B_1\cap Y)'=A_2\cup (B_2\cap Y)'$ with $A_i,B_i\in\mathcal{E}$.
Intersecting with $\mathcal{G}^0$ gives $A_1=Z\cap \mathcal{G}^0=A_2$. 
Likewise, applying the inverse of $(\cdot)'$ on $Y'$ gives
$B_1\cap Y=\{\,w\in Y:\ w'\in Z\,\}=B_2\cap Y$. Thus $A$ and $B\cap Y$ are uniquely determined.
\end{proof}

\begin{corollary}\label{decomposition2}
Let $(\mathcal{G},X)$ be a relative ultragraph and set $Y=\mathrm{Reg}(\mathcal{G})\setminus X$.
If $r(e)\cap Y$ is finite for every $e\in\mathcal{G}^1$, then for each $Z\in\mathcal{E}_X$ there
is a unique decomposition
\[
Z \;=\; A \;\cup\; (B\cap Y)',
\]
with $A,B\in\mathcal{E}$ and $(B\cap Y)'$ finite.
\end{corollary}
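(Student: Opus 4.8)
The plan is to obtain the corollary directly from Lemma~\ref{decomposition1}: existence and uniqueness of a decomposition $Z = A \cup (B\cap Y)'$ with $A,B\in\mathcal{E}$ are already furnished there, so the only new assertion is that the finiteness hypothesis forces $(B\cap Y)'$ to be finite. Thus everything reduces to showing that the uniquely determined set $B\cap Y$ is finite. The uniqueness clause of Lemma~\ref{decomposition1} identifies this set explicitly as $B\cap Y = \{\,w\in Y : w'\in Z\,\}$, so it suffices to prove that only finitely many elements of $Y'$ lie in $Z$.

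To this end I would write $Z$ in the normal form provided by \cite[Lemma~2.12]{Tom2}, namely $Z = \bigcup_{i=1}^N \big(\bigcap_{f\in X_i} r_X(f)\big) \cup F$ with each $X_i\subseteq\mathcal{G}_X^1$ finite and $F\subseteq\mathcal{G}_X^0$ finite. The elements of $Z$ that lie in $Y'$ arise either from $F\cap Y'$, which is finite, or from those intersection terms that meet $Y'$. Using the disjointness of $\mathcal{G}^0$ and $Y'$, together with $r_X(f)\subseteq\mathcal{G}^0$ for $f\in\mathcal{G}^1$ and $r_X(e')=(r(e)\cap Y)'\subseteq Y'$ for $e'\in C$, an intersection $\bigcap_{f\in X_i} r_X(f)$ can meet $Y'$ only when $X_i\subseteq C$. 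In that case, picking any $e'\in X_i$ gives $\bigcap_{f\in X_i} r_X(f)\subseteq r_X(e')=(r(e)\cap Y)'$, which is finite by hypothesis.

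Since there are finitely many indices $i$ and each contributing term is finite, the set $\{\,w'\in Y' : w'\in Z\,\}=(B\cap Y)'$ is finite, which is the desired conclusion. The one point needing care — and the only real obstacle — is the treatment of empty index sets: as in the proof of Lemma~\ref{decomposition1}, one discards empty intersections (equivalently, takes each $X_i$ nonempty), so that every intersection term with $X_i\subseteq C$ is genuinely contained in some $(r(e)\cap Y)'$ and the finiteness hypothesis can be applied.
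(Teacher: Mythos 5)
Your proof is correct and follows essentially the same route as the paper: both write $Z$ in the normal form of \cite[Lemma~2.12]{Tom2}, observe that after discarding empty intersections only the terms with $X_i\subseteq C$ can contribute points of $Y'$, and bound each such term by a single $(r(e)\cap Y)'$, which is finite by hypothesis, together with the finitely many points of $F\cap Y'$. The only cosmetic difference is that you reduce to showing $Z\cap Y'$ is finite via the uniqueness identification $(B\cap Y)'=Z\cap Y'$, whereas the paper tracks the explicit set $B_1$ constructed in the proof of Lemma~\ref{decomposition1}; the underlying argument is identical.
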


\begin{proof}
With the notation in the proof of Lemma~\ref{decomposition1}, the hypothesis implies
that each set $\bigcap_{e'\in X_i}(r(e)\cap Y)$ (with $X_i\subseteq C$) is finite; hence their
finite union is finite. Therefore $(B_1\cap Y)'$ is finite, and after adjoining the
finitely many points coming from $F\cap Y'$ we still have $(B\cap Y)'$ finite.
Uniqueness is as in Lemma~\ref{decomposition1}.
\end{proof}

Our next goal is to prove that, if $r(e)\cap Y$ is finite for every $e\in\mathcal{G}^1$, then
$\phi$ is a $*$-isomorphism $C^*(\mathcal{G},X)\cong C^*(\mathcal{G}_X)$. 
We split the argument into two propositions: the first establishes \emph{surjectivity}, and the
second constructs an explicit left-inverse $\psi:C^*(\mathcal{G}_X)\to C^*(\mathcal{G},X)$ with
$\psi\circ\phi=\mathrm{id}$, which yields \emph{injectivity} of $\phi$.

\begin{proposition}\label{surjective homomorphism}
Let $(\mathcal{G},X)$ be a relative ultragraph and set $Y:=\mathrm{Reg}(\mathcal{G})\setminus X$.
If $r(e)\cap Y$ is finite for all $e\in\mathcal{G}^1$, then the $*$-homomorphism
$\phi:C^*(\mathcal{G},X)\to C^*(\mathcal{G}_X)$ from Proposition~\ref{esfriou, vai dar tainha} is
surjective.
\end{proposition}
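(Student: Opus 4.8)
The plan is to show that the image $\mathcal{A}:=\phi\big(C^*(\mathcal{G},X)\big)$, which is a $C^*$-subalgebra of $C^*(\mathcal{G}_X)$, contains every canonical generator of $C^*(\mathcal{G}_X)$, namely every $S_f$ with $f\in\mathcal{G}_X^1=\mathcal{G}^1\cup C$ and every $P_B$ with $B\in\mathcal{E}_X$; surjectivity then follows at once. I would carry this out in three stages: first the vertex projections $P_{\{w\}}$ for $w\in\mathcal{G}_X^0$, then the partial isometries $S_f$, and finally arbitrary $P_B$.

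The first and decisive stage is to separate $P_v$ from $P_{v'}$ inside the relation $\phi(p_v)=P_v+P_{v'}$ valid for $v\in Y$. The key point is that every $v\in Y\subseteq\mathrm{Reg}(\mathcal{G})$ is a \emph{regular} vertex of the ordinary ultragraph $\mathcal{G}_X$: one checks that $s_X^{-1}(v)=s^{-1}(v)\cup\{\,e'\in C:\ s(e)=v\,\}$ is finite and nonempty. Hence the Cuntz--Krieger relation is imposed at $v$ in $C^*(\mathcal{G}_X)$, giving $P_v=\sum_{f\in s_X^{-1}(v)}S_fS_f^*$. Matching this against the formula for $\phi(s_e)\phi(s_e)^*$ computed in Proposition~\ref{esfriou, vai dar tainha} (where $\phi(s_e)\phi(s_e)^*$ equals $S_eS_e^*$ or $S_eS_e^*+S_{e'}S_{e'}^*$ according as $r(e)\cap Y$ is empty or not), I would verify the identity $\sum_{e\in s^{-1}(v)}\phi(s_e)\phi(s_e)^*=P_v$. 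This exhibits $P_v\in\mathcal{A}$, and therefore $P_{v'}=\phi(p_v)-P_v\in\mathcal{A}$ as well. For $v\in\mathcal{G}^0\setminus Y$ one has $\{v\}\cap Y=\emptyset$ and so $\phi(p_v)=P_v\in\mathcal{A}$ directly. Thus every vertex projection $P_{\{w\}}$, $w\in\mathcal{G}_X^0$, lies in $\mathcal{A}$.

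With the vertex projections in hand, the finiteness hypothesis enters to recover the edges. For $e\in\mathcal{G}^1$ with $r(e)\cap Y=\emptyset$ we have $S_e=\phi(s_e)\in\mathcal{A}$ immediately. For $e$ with $r(e)\cap Y\neq\emptyset$, the set $r(e)\cap Y$ is finite by hypothesis, so $P_{(r(e)\cap Y)'}=\sum_{w\in r(e)\cap Y}P_{w'}$ is a finite sum of projections already known to be in $\mathcal{A}$. Since $S_{e'}=S_{e'}\,P_{(r(e)\cap Y)'}$ (because $S_{e'}^*S_{e'}=P_{r_X(e')}=P_{(r(e)\cap Y)'}$), while $S_e\,P_{(r(e)\cap Y)'}=0$ (the ranges $r(e)\subseteq\mathcal{G}^0$ and $(r(e)\cap Y)'\subseteq Y'$ being disjoint), right-multiplying $\phi(s_e)=S_e+S_{e'}$ by $P_{(r(e)\cap Y)'}$ isolates $S_{e'}=\phi(s_e)\,P_{(r(e)\cap Y)'}\in\mathcal{A}$, whence also $S_e=\phi(s_e)-S_{e'}\in\mathcal{A}$. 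This accounts for every $f\in\mathcal{G}_X^1$.

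Finally, for an arbitrary $B\in\mathcal{E}_X$ I would invoke Corollary~\ref{decomposition2} to write $B=A\cup(B_0\cap Y)'$ with $A,B_0\in\mathcal{E}$ and $(B_0\cap Y)'$ finite, so that $P_B=P_A+P_{(B_0\cap Y)'}$. The finite part is a finite sum of the $P_{w'}\in\mathcal{A}$ from the first stage, and for $P_A$ one uses that the hypothesis forces $A\cap Y$ to be finite (each range–intersection in the normal form of $A$ from \cite[Lemma~2.12]{Tom2} is contained in some $r(e)\cap Y$), so $P_{(A\cap Y)'}\in\mathcal{A}$ and hence $P_A=\phi(p_A)-P_{(A\cap Y)'}\in\mathcal{A}$. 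Therefore $\mathcal{A}$ contains all generators and $\phi$ is surjective. The only genuinely delicate point I anticipate is the identity $\sum_{e\in s^{-1}(v)}\phi(s_e)\phi(s_e)^*=P_v$: one must line up the dichotomy ``$r(e)\cap Y$ empty versus nonempty'' in the $\phi(s_e)$ against the two families of edges, $s^{-1}(v)$ and $\{e':\ s(e)=v\}$, emanating from $v$ in $\mathcal{G}_X$, and it is exactly here that the regularity of $v$ in $\mathcal{G}_X$—and hence the availability of the Cuntz--Krieger relation—is indispensable.
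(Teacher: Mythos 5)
Your proposal is correct and follows essentially the same route as the paper's proof: both hinge on (i) the observation that each $v\in Y$ remains regular in $\mathcal{G}_X$, so the Cuntz--Krieger relation there gives $P_v=\sum_{e\in s^{-1}(v)}\phi(s_e)\phi(s_e)^*$ and hence $P_{v'}=\phi(p_v)-P_v$, and (ii) the finiteness of $r(e)\cap Y$, which lets you write $P_{(r(e)\cap Y)'}$ as a finite sum of the $P_{w'}$ and right-multiply $\phi(s_e)$ by it to isolate $S_{e'}$ and $S_e$. The only (cosmetic) difference is at the end: the paper recovers the range projections $P_{r(f)}$ and stops there (implicitly using that vertex and range projections generate all $P_B$), whereas you treat arbitrary $P_B$ explicitly via the decomposition $B=A\cup(B_0\cap Y)'$ of Corollary~\ref{decomposition2} together with the finiteness of $A\cap Y$; both closings are valid.
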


\begin{proof}
We show that the canonical generators of $C^*(\mathcal{G}_X)$ lie in $\operatorname{Im}(\phi)$.
Since $C^*(\mathcal{G}_X)$ is generated by $\{P_B : B\in \mathcal{E}_X\}\cup\{S_f : f\in \mathcal{G}_X^1\}$,
this will prove surjectivity.
     
First, we show that all projections \(P_w\), for \(w \in E^0_X\), are contained in \(\mathrm{Im}(\phi)\). If \(w = v \in \G^0\) and \(v \notin Y\), then \(\phi(p_v) = P_v\); if \(w = v \in \G^0\) and \(v \in Y\), then

 \begin{align*}
    \sum\limits_{e \in \G^1 \, | \, s(e) = v} \phi(s_e)\phi(s_e)^* 
    & = \sum\limits_{\substack{e \in \G^1 \, | \, s(e) = v \\ r(e) \cap Y = \emptyset}} \phi(s_e)\phi(s_e)^* 
    + \sum\limits_{\substack{e \in \G^1\, | \, s(e) = v \\ r(e) \cap Y \neq \emptyset}} \phi(s_e)\phi(s_e)^* \\
    & = \sum\limits_{\substack{e \in \G^1\, | \, s(e) = v \\ r(e) \cap Y = \emptyset}} s_es_e^* 
    + \sum\limits_{\substack{e \in \G^1 \, | \, s(e) = v \\ r(e) \cap Y \neq \emptyset}} (s_e + s_{e'})(s_e + s_{e'})^* \\
    & = \sum\limits_{\substack{e \in \G^1 \, | \, s(e) = v \\ r(e) \cap Y = \emptyset}} s_es_e^* 
    + \sum\limits_{\substack{e \in \G^1 \, | \, s(e) = v \\ r(e) \cap Y \neq \emptyset}} \big(s_es_e^* + s_{e'} s_{e'}^*\big) \\
    & = \sum\limits_{f \in \G^1_X \, | \, s(f) = v} s_fs_f^* = P_v.
\end{align*}

Thus, $P_v \in Im(\phi)$ and, by definition,
    $$\phi \left( p_v - \sum\limits_{e \in E^1 \ | \ s(e)  = v} s_es_e^*  \right) = P_v + P_{v'} - P_v = P_{v'}, $$
showing that all projections $P_w$ are in the image of $\phi$. 

Next, we prove that all projections of the form \(P_{r(f)}\), for \(f \in \G_X^1\), belong to \(\text{Im}(\phi)\). If \(f \in C\), then \(f = e'\) for some \(e \in E^1\), and \(r(f) = r(e') = (r(e) \cap Y)'\), which is finite by hypothesis. Therefore, \(P_{r(f)} = \sum_{v \in r(f)} P_v \in \text{Im}(\phi)\). If \(f \in E^1\), we consider two cases: if \(r(f) \cap Y = \emptyset\), then \(\phi(P_{r(f)}) = P_{r(f)}\); if \(r(f) \cap Y \neq \emptyset\), then \(\phi(P_{r(f)}) = P_{r(f)} + P_{(r(f) \cap Y)'} = P_{r(f)} + P_{r(f')}\), showing that \(P_{r(f)} = \phi(P_{r(f)}) - P_{r(f')} \in \text{Im}(\phi)\).

To conclude the proof, it suffices to show that all elements of the form \(S_f\), for \(f \in \G_X^1\), are in the image of \(\phi\). This is clear if \(f \in E^1\) and \(r(f) \cap Y = \emptyset\) because \(\phi(s_f) = S_f\). If \(f \in E^1\) and \(r(f) \cap Y \neq \emptyset\), we observe that $
\phi(s_f) P_{r(f)} = (S_f + S_{f'}) P_{r(f)} = S_f$ and $
\phi(s_f) P_{r(f')} = (S_f + S_{f'}) P_{r(f')} = S_{f'}
$.
Since \(P_{r(f)}\) and \(P_{r(f')}\) are in \(\text{Im}(\phi)\), we conclude that \(S_f, S_{f'} \in \text{Im}(\phi)\), completing the proof.

\end{proof}

To clarify the proof of injectivity of $\phi$, we begin with a computational lemma.

\begin{lemma}\label{contas1}
Let $(\mathcal{G},X)$ be a relative ultragraph and set $Y:=\mathrm{Reg}(\mathcal{G})\setminus X$.
Suppose $Z_1=A_1\cup (B_1\cap Y)'$ and $Z_2=A_2\cup (B_2\cap Y)'$ with $A_1,A_2,B_1,B_2\in\mathcal{E}$.
Define, for $w\in \mathcal{G}^0$,
\[
q_w \;:=\; p_w \;-\; \sum_{\{e\in \mathcal{G}^1:\ s(e)=w\}} s_e s_e^* .
\]
Then the following hold:
\begin{enumerate}
    \item \(p_{A_1} \left( \sum\limits_{e \mid s(e) \in A_2 \cap Y} s_e s_e^* \right) = p_{A_1 \cap Y} \left( \sum\limits_{e \mid s(e) \in A_2 \cap Y} s_e s_e^* \right)\).
    \item \(\left( \sum\limits_{e \mid s(e) \in A_1 \cap Y} s_e s_e^* \right) p_{A_2} = \left( \sum\limits_{e \mid s(e) \in A_1 \cap Y} s_e s_e^* \right) p_{A_2 \cap Y}\).
    \item \(p_{A_1} \left( \sum\limits_{w \mid w \in B_2 \cap Y} q_w \right) = p_{A_1 \cap Y} \left( \sum\limits_{w \mid w \in B_2 \cap Y} q_w \right)\).
    \item \(\left( \sum\limits_{w \mid w \in B_1 \cap Y} q_w \right) p_{A_2} = \left( \sum\limits_{w \mid w \in B_1 \cap Y} q_w \right) p_{A_2 \cap Y}\).
    \item \(\left( \sum\limits_{e \mid s(e) \in A_1 \cap Y} s_e s_e^* \right) \left( \sum\limits_{w \in B_2 \cap Y} q_w \right) = 0 = \left( \sum\limits_{w \in B_1 \cap Y} q_w \right) \left( \sum\limits_{e \mid s(e) \in A_2 \cap Y} s_e s_e^* \right)\).
    \item \(q_w q_v = \delta_{wv} q_w\), and consequently, \(\left( \sum\limits_{w \in B_1 \cap Y} q_w \right) \left( \sum\limits_{w \in B_2 \cap Y} q_w \right) = \sum\limits_{w \in B_1 \cap B_2 \cap Y} q_w\).
\end{enumerate}
\end{lemma}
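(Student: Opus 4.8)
The plan is to reduce every identity to a verification on the individual generators, so that the (possibly infinite) sums never need to be handled as convergent series: each asserted equality holds summand by summand, and the inner sums $\sum_{\{e:s(e)=w\}}$ are finite because $Y\subseteq\mathrm{Reg}(\mathcal{G})$. Before treating the six items I would record two elementary consequences of the defining relations of $C^*(\mathcal{G},X)$. First, since $s_es_e^*\le p_{s(e)}$ by relation~(3), one has $p_{s(e)}s_es_e^*=s_es_e^*=s_es_e^*p_{s(e)}$. Second, for $w\in\mathrm{Reg}(\mathcal{G})$ the element $q_w$ is the difference of the projection $p_w$ and the subprojection $\sum_{s(e)=w}s_es_e^*\le p_w$ (a finite sum of mutually orthogonal projections, by regularity and orthogonality of ranges), so $q_w$ is itself a projection and satisfies $q_w=p_wq_w=q_wp_w$.

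The four ``localization to $Y$'' identities (1)--(4) I would deduce from a single observation: if $t$ is any element supported at a single vertex $w\in Y$, in the sense that $t=p_wt$, then
\[
p_A t=p_Ap_w t=p_{A\cap\{w\}}t=p_{(A\cap Y)\cap\{w\}}t=p_{A\cap Y}p_w t=p_{A\cap Y}t,
\]
where the middle step uses $\{w\}\cap A=\{w\}\cap(A\cap Y)$, valid precisely because $w\in Y$. Applying this with $A=A_1$ and $t=s_es_e^*$ (where $s(e)\in A_2\cap Y$, so $t=p_{s(e)}t$) gives (1), and with $t=q_w$ (where $w\in B_2\cap Y$, so $t=p_wq_w$) gives (3). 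The right-handed versions (2) and (4) follow identically, using instead $t=tp_w$ together with $q_w=q_wp_w$ and $s_es_e^*=s_es_e^*p_{s(e)}$.

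For (5) I would argue termwise that $s_es_e^*\,q_w=0$ whenever $s(e)\in A_1\cap Y$ and $w\in B_2\cap Y$, splitting into two cases. If $w\ne s(e)$, then $s_es_e^*q_w=s_es_e^*p_{s(e)}p_wq_w=0$ because $p_{s(e)}p_w=p_{\{s(e)\}\cap\{w\}}=p_\emptyset=0$. If $w=s(e)$, then $s_es_e^*$ is one of the summands of $\sum_{s(f)=w}s_fs_f^*$; using $s_es_e^*p_w=s_es_e^*$ together with $s_es_e^*s_fs_f^*=\delta_{ef}s_es_e^*$ (orthogonality of ranges), I obtain $s_es_e^*q_w=s_es_e^*-s_es_e^*=0$. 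The second equality in (5) is the adjoint of the first, since $s_es_e^*$ and $q_w$ are self-adjoint. Finally, for (6), the relation $q_wq_v=\delta_{wv}q_w$ follows from $q_w^2=q_w$ (as $q_w$ is a projection) together with $q_wq_v=q_wp_wp_vq_v=0$ for $w\ne v$; expanding the product of the two sums bilinearly and collecting the surviving diagonal terms then yields $\sum_{w\in B_1\cap B_2\cap Y}q_w$, since $(B_1\cap Y)\cap(B_2\cap Y)=B_1\cap B_2\cap Y$.

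I expect no serious obstacle here: the entire lemma is bookkeeping built on relations~(1)--(3) and the orthogonality of the ranges of the $s_e$. The only points demanding care are the recognition in (5) that, when $w=s(e)$, the projection $s_es_e^*$ is absorbed by the Cuntz--Krieger defect $q_w$, and the standing verification that each sum over edges with a fixed source is finite, so that $q_w$ and the displayed sums are genuine operators; both are guaranteed by $Y\subseteq\mathrm{Reg}(\mathcal{G})$.
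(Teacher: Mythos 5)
Your proof is correct and takes essentially the same route as the paper's: both verify each identity summand by summand using $p_Ap_B=p_{A\cap B}$, $s_es_e^*\le p_{s(e)}$, and orthogonality of ranges, with your ``localization at a vertex of $Y$'' observation being a compact packaging of the paper's case analysis $s(e)\in A_1$ versus $s(e)\notin A_1$ (and, in items (5)--(6), the same absorption of $s_es_e^*$ into the defect $q_w$). The only cosmetic difference is that you obtain $q_wq_v=\delta_{wv}q_w$ from $q_w$ being a projection, whereas the paper expands the product directly.
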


\begin{proof}
  
The first item holds because, for all \(e \in E^1\) such that \(s(e) \in A_2 \cap Y\), we have  $
p_{A_1} s_e s_e^* = p_{A_1 \cap Y} s_e s_e^*$, which equals $ s_e s_e^*$ if $s(e) \in A_1$, and equals 0 if $s(e) \notin A_1$.
The second item follows analogously. The same reasoning applies to the third item:  
\begin{align*}
   p_{A_1 \cap Y} \left( \sum\limits_{w \mid w \in B_2 \cap Y } q_w\right) & =  \sum\limits_{w \mid w \in B_2 \cap Y } 
 \left( p_{A_1 \cap Y} p_w - \sum\limits_{e \mid s(e) = w} p_{A_1 \cap Y} s_e s_e^* \right)  \\
 & = \sum\limits_{w \mid w \in B_2 \cap Y } 
 \left( p_{A_1} p_w - \sum\limits_{e \mid s(e) = w} p_{A_1} s_e s_e^* \right) = p_{A_1} \left( \sum\limits_{w \mid w \in B_2 \cap Y } q_w\right). 
\end{align*}  
The fourth item follows the same logic as the third. The left-hand side of the fifth item’s equality holds because

\begin{align*}
   & \left( \sum_{s(e) \in A_1 \cap Y} s_e s_e^* \right) 
   \left( \sum_{w \in B_2 \cap Y} q_w \right) = 
   \left( \sum_{s(e) \in A_1 \cap Y} s_e s_e^* \right) 
   \left( \sum_{w \in B_2 \cap Y} \left( p_w - \sum_{s(f) = w} s_f s_f^* \right) \right) \\
    & = \sum_{w \in B_2 \cap Y} \left[ 
    \left( \sum_{s(e) \in A_1 \cap Y} s_e s_e^* \right) p_w - 
    \left( \sum_{s(e) \in A_1 \cap Y} s_e s_e^* \right) 
    \left( \sum_{s(f) = w} s_f s_f^* \right) \right] \\
    & = \sum_{w \in B_2 \cap Y} \left[ 
    \left( \sum_{s(e) = w \in A_1 \cap Y} s_e s_e^* \right) - 
    \left( \sum_{s(e) = w \in A_1 \cap Y} s_e s_e^* \right) 
    \right] = 0.
\end{align*}
and the right-hand side of the fifth item’s  equality can be proved by the same computation.  Finally, the last item follows from:

\begin{align*}
    q_w q_v & = \left(p_w - \sum_{s(e) = w} s_e s_e^* \right) 
    \left(p_v - \sum_{s(f) = v} s_f s_f^* \right) \\
    & = p_w p_v - \sum_{s(f) = v} p_w s_f s_f^* - 
    \sum_{s(e) = w} s_e s_e^* p_v + 
    \sum_{s(e) = w} \sum_{s(f) = v} s_e s_e^* p_w p_v s_f s_f^* \\
    & = \delta_{wv} \left(p_w - \sum_{s(e) = w} s_e s_e^* \right) = \delta_{wv}q_w.
\end{align*}

\end{proof}

We are now able to prove that $\phi$ has a left inverse.

\begin{proposition}\label{isomorphismrelative}
Let $(\mathcal{G},X)$ be a relative ultragraph and set
$Y:=\mathrm{Reg}(\mathcal{G})\setminus X$. If $r(e)\cap Y$ is finite for every
$e\in\mathcal{G}^1$, then the $*$-homomorphism
$\phi: C^*(\mathcal{G},X)\to C^*(\mathcal{G}_X)$ from
Proposition~\ref{esfriou, vai dar tainha} is injective.
\end{proposition}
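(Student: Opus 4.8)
The plan is to construct an explicit $*$-homomorphism $\psi:C^*(\mathcal{G}_X)\to C^*(\mathcal{G},X)$ and to show that $\psi\circ\phi=\mathrm{id}_{C^*(\mathcal{G},X)}$; since the identity is injective, this forces $\phi$ to be injective. To build $\psi$ I would invoke the universal property of $C^*(\mathcal{G}_X)$, so it suffices to exhibit a Cuntz--Krieger $\mathcal{G}_X$-family inside $C^*(\mathcal{G},X)$. Writing $q_w$ for the projections of Lemma~\ref{contas1} and recalling from Corollary~\ref{decomposition2} that every $Z\in\mathcal{E}_X$ has a unique decomposition $Z=A\cup(B\cap Y)'$ with $A,B\in\mathcal{E}$ and $(B\cap Y)'$ finite, I would set
\[
\psi(P_Z):=p_A+\sum_{w\in B\cap Y}q_w-\sum_{w\in A\cap Y}q_w,
\]
\[
\psi(S_e):=s_e\Big(p_{r(e)}-\sum_{w\in r(e)\cap Y}q_w\Big)\ (e\in\mathcal{G}^1),\qquad
\psi(S_{e'}):=s_e\sum_{w\in r(e)\cap Y}q_w\ (e'\in C).
\]
All sums here are finite precisely because $r(e)\cap Y$ is finite for every $e$; this is where the hypothesis enters, and it also guarantees that $A\cap Y$ and $B\cap Y$ are finite, so that $\psi$ is well defined.

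Next I would check that these elements satisfy the four defining relations of $C^*(\mathcal{G}_X)$. The range relation $\psi(S_f)^*\psi(S_f)=\psi(P_{r_X(f)})$ follows from $s_e^*s_e=p_{r(e)}$ together with the fact that $\sum_{w\in r(e)\cap Y}q_w$ is a projection dominated by $p_{r(e)}$; concretely $\psi(S_{e'})^*\psi(S_{e'})=\sum_{w\in r(e)\cap Y}q_w=\psi(P_{(r(e)\cap Y)'})$ and $\psi(S_e)^*\psi(S_e)=p_{r(e)}-\sum_{w\in r(e)\cap Y}q_w=\psi(P_{r(e)})$, while orthogonality of the ranges, $\psi(S_f)^*\psi(S_g)=0$ for $f\neq g$, reduces to $s_e^*s_{e_1}=0$ ($e\neq e_1$) and to the orthogonality of the $q_w$. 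The source relation $\psi(S_f)\psi(S_f)^*\le\psi(P_{s_X(f)})$ follows from $s_es_e^*\le p_{s(e)}$ and, when $s(e)\in Y$, from $s_es_e^*\le\sum_{s(h)=s(e)}s_hs_h^*=p_{s(e)}-q_{s(e)}=\psi(P_{s(e)})$. For the Cuntz--Krieger relation I would first note that the regular vertices of $\mathcal{G}_X$ are exactly those of $\mathcal{G}$, namely $X\cup Y$ (the primed vertices are sinks); summing the identities $\psi(S_e)\psi(S_e)^*+\psi(S_{e'})\psi(S_{e'})^*=s_es_e^*$ over the edges with a fixed source $v$ telescopes the right-hand side to $\sum_{s(e)=v}s_es_e^*$, which equals $\psi(P_v)$ both when $v\in X$ (using the Cuntz--Krieger relation already available in $C^*(\mathcal{G},X)$) and when $v\in Y$ (by the definition of $q_v$).

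The delicate point, and the one I expect to be the main obstacle, is verifying the Boolean relations $\psi(P_Z)\psi(P_W)=\psi(P_{Z\cap W})$ and $\psi(P_{Z\cup W})=\psi(P_Z)+\psi(P_W)-\psi(P_{Z\cap W})$ for arbitrary $Z,W\in\mathcal{E}_X$. Here one must expand the product of the two three-term expressions defining $\psi(P_Z)$ and $\psi(P_W)$ and control every cross term: the mixed products $p_{A_i}q_w$, the products $q_wq_v$, and the interaction between the $q$-sums and the range projections. This is exactly the content of Lemma~\ref{contas1}, which lets me replace $p_{A_i}$ by $p_{A_i\cap Y}$ against any $q$-sum, collapse $q_wq_v=\delta_{wv}q_w$, and annihilate the spurious cross terms. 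After reindexing via $(B_1\cap Y)'\cap(B_2\cap Y)'=(B_1\cap B_2\cap Y)'$ and the inclusion--exclusion identity for finite sums of the orthogonal projections $q_w$, the nine resulting terms collapse to $p_{A_1\cap A_2}+\sum_{w\in B_1\cap B_2\cap Y}q_w-\sum_{w\in A_1\cap A_2\cap Y}q_w=\psi(P_{Z\cap W})$, and the union relation follows by the same bookkeeping.

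Once all the relations are verified, the universal property of $C^*(\mathcal{G}_X)$ yields the desired $*$-homomorphism $\psi:C^*(\mathcal{G}_X)\to C^*(\mathcal{G},X)$. Finally, a direct computation on the generators of $C^*(\mathcal{G},X)$ shows $\psi\circ\phi=\mathrm{id}$: using $\psi(P_B)=p_B-\sum_{w\in B\cap Y}q_w$ and $\psi(P_{(B\cap Y)'})=\sum_{w\in B\cap Y}q_w$ one gets $\psi(\phi(p_B))=p_B$, while $\psi(S_e)+\psi(S_{e'})=s_e$ gives $\psi(\phi(s_e))=s_e$ in both cases of the definition of $\phi$. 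Hence $\phi$ is injective, which together with Proposition~\ref{surjective homomorphism} gives the isomorphism $C^*(\mathcal{G},X)\cong C^*(\mathcal{G}_X)$.
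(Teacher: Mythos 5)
Your proposal is correct and takes essentially the same route as the paper: the paper's proof also constructs a left inverse $\psi$ via the universal property of $C^*(\mathcal{G}_X)$, using the unique decomposition $Z=A\cup(B\cap Y)'$ from Corollary~\ref{decomposition2} and the computations of Lemma~\ref{contas1} to verify the Boolean, range, source, and Cuntz--Krieger relations, and then checks $\psi\circ\phi=\mathrm{id}$ on generators. In fact, after substituting $q_w=p_w-\sum_{s(e)=w}s_es_e^*$, your formulas for $\psi(P_Z)$ and $\psi(S_f)$ coincide verbatim with the paper's.
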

\begin{proof}
Following the notation of Lemma~\ref{contas1}, define, for all
$w \in \mathrm{Reg}(\mathcal{G})=\mathrm{Reg}(\mathcal{G}_X)$,
\[
q_w \;:=\; p_w \;-\; \sum_{\substack{e \in \mathcal{G}^1 \\ s(e)=w}} s_e s_e^* .
\]
Let $Z \in \mathcal{E}_X^0$. By Corollary~\ref{decomposition2}, we can write $Z$ uniquely as
$Z = A \cup (B \cap Y)'$ where $A,B \in \mathcal{E}^0$ and $(B \cap Y)'$ is finite. We define $\psi$
on the generators of $C^*(\mathcal{G}_X)$ as follows:
\[
\psi(P_Z)
\;=\;
p_A - p_{A \cap Y}
\;+\;
\sum_{\substack{e \in \mathcal{G}^1 \\ s(e) \in A \cap Y}} s_e s_e^*
\;+\;
\sum_{\substack{w \in \mathcal{G}^0 \\ w \in B \cap Y}} q_w,
\]
and
\[
\psi(S_f)
=
\begin{cases}
\displaystyle
s_f \!\left(
p_{r(f)}
-
p_{r(f)\cap Y}
+
\sum_{\substack{g \in \mathcal{G}^1 \\ s(g) \in r(f)\cap Y}}
s_g s_g^*
\right),
& \text{if } f \in \mathcal{G}^1, \\[10pt]
\displaystyle
s_e \!\left(
p_{r(e)\cap Y}
-
\sum_{\substack{g \in \mathcal{G}^1 \\ s(g) \in r(e)\cap Y}}
s_g s_g^*
\right),
& \text{if } f=e' \in C.
\end{cases}
\]
Sums over the empty set are taken to be $0$. The sums written above are finite by the hypothesis.

We claim that $\psi$ is well defined and that $\psi \circ \phi = \mathrm{id}$. To see that $\psi$ is
well defined, we use the universal property of $C^*(\mathcal{G}_X)$; some computations are necessary:

\begin{enumerate}
\item We verify that $\psi(P_\emptyset)=0$, that
$\psi(P_{Z_1 \cap Z_2}) = \psi(P_{Z_1}) \psi(P_{Z_2})$, and that
$\psi(P_{Z_1\cup Z_2})= \psi(P_{Z_1})+\psi(P_{Z_2})-\psi(P_{Z_1\cap Z_2})$ for all
$Z_1, Z_2 \in \mathcal{E}_X^0$.

Of course $\psi(P_{\emptyset}) = p_{\emptyset} = 0$. Let us now prove that
$\psi(P_{Z_1 \cap Z_2}) = \psi(P_{Z_1}) \psi(P_{Z_2})$ for all $Z_1, Z_2 \in \mathcal{E}_X^0$.
Write
\[
Z_1 = A_1 \cup (B_1 \cap Y)'
\quad\text{and}\quad
Z_2 = A_2 \cup (B_2 \cap Y)'
\]
as in Corollary~\ref{decomposition2}. Note that $\psi(P_{Z_1}) \psi(P_{Z_2})$ is the product of
\[
\left(
p_{A_1} - p_{A_1 \cap Y}
+ \sum_{\substack{e \in \mathcal{G}^1 \\ s(e) \in A_1 \cap Y}} s_e s_e^*
+ \sum_{\substack{w \in \mathcal{G}^0 \\ w \in B_1 \cap Y}} q_w
\right)
\]
and
\[
\left(
p_{A_2} - p_{A_2 \cap Y}
+ \sum_{\substack{e \in \mathcal{G}^1 \\ s(e) \in A_2 \cap Y}} s_e s_e^*
+ \sum_{\substack{w \in \mathcal{G}^0 \\ w \in B_2 \cap Y}} q_w
\right).
\]
Using the equalities provided in Lemma~\ref{contas1}, most of the products are $0$ or cancel with
another term, so that
\[
\psi(P_{Z_1})\psi(P_{Z_2})
=
p_{A_1 \cap A_2}
-
p_{A_1 \cap A_2 \cap Y}
+
\sum_{\substack{e \in \mathcal{G}^1 \\ s(e) \in A_1 \cap A_2 \cap Y}} s_e s_e^*
+
\sum_{w \in B_1 \cap B_2 \cap Y} q_w.
\]
Thus we have the desired result; indeed, the right-hand side is exactly the definition of
$\psi(P_{Z_1 \cap Z_2})$, because
$Z_1 \cap Z_2 = (A_1 \cap A_2) \cup (B_1 \cap B_2 \cap Y)'$.

To finalize item~(1), we still need to show that
$\psi(P_{Z_1}) + \psi(P_{Z_2}) - \psi(P_{Z_1 \cap Z_2}) = \psi(P_{Z_1 \cup Z_2})$.
This is similar to what we did above, and we leave it to the interested reader.

\item In this item we show that $\psi(S_f)^*\psi(S_f) = \psi(P_{r(f)})$. If $f \in \mathcal{G}^1$,
then $\psi(S_f)^* \psi(S_f)$ equals
\[
\left(
s_f - s_f p_{r(f) \cap Y}
+ \sum_{\substack{g \in \mathcal{G}^1 \\ s(g) \in r(f) \cap Y}} s_f s_g s_g^*
\right)^{\!*}
\left(
s_f - s_f p_{r(f) \cap Y}
+ \sum_{\substack{g \in \mathcal{G}^1 \\ s(g) \in r(f) \cap Y}} s_f s_g s_g^*
\right).
\]
Using that, for all $g$ with $s(g) \in r(f) \cap Y$, we have
$p_{r(f) \cap Y}s_g s_g^* = s_g s_g^*$ and $p_{r(f)}s_g s_g^* = s_g s_g^*$, we conclude that
\[
\psi(S_f)^* \psi(S_f)
=
p_{r(f)} - p_{r(f) \cap Y}
+ \sum_{\substack{g \in \mathcal{G}^1 \\ s(g) \in r(f) \cap Y}} s_g s_g^*
=
\psi(P_{r(f)}),
\]
where the last equality holds because $r(f) \subseteq \mathcal{G}^0$.

The remaining case is when $f = e'$ for some $e \in \mathcal{G}^1$. In this situation,
$r(f) = r(e') = \emptyset \cup (r(e) \cap Y)'$, and thus
\[
\psi(P_{r(f)})
=
\psi(P_{r(e')})
=
\sum_{\substack{w \in \mathcal{G}^0 \\ w \in r(e) \cap Y}} q_w
=
p_{r(e) \cap Y}
-
\sum_{\substack{g \in \mathcal{G}^1 \\ s(g) \in r(e) \cap Y}} s_g s_g^*,
\]
where the last equality holds because the right-hand side is the left-hand side after reordering
the sum. Moreover, a computation shows that
\begin{align*}
\psi(S_f)^*\psi(S_f)
&=
\left(
s_e  p_{r(e) \cap Y}
-
\sum_{\substack{g \in \mathcal{G}^1 \\ s(g) \in r(e) \cap Y}} s_e s_g s_g^*
\right)^{\!*}
\left(
s_e  p_{r(e) \cap Y}
-
\sum_{\substack{g \in \mathcal{G}^1 \\ s(g) \in r(e) \cap Y}} s_e s_g s_g^*
\right) \\
&=
\left(
p_{r(e) \cap Y} s_e^*
-
\sum_{\substack{g \in \mathcal{G}^1 \\ s(g) \in r(e) \cap Y}} s_g s_g^* s_e^*
\right)
\left(
s_e  p_{r(e) \cap Y}
-
\sum_{\substack{g \in \mathcal{G}^1 \\ s(g) \in r(e) \cap Y}} s_e s_g s_g^*
\right) \\
&=
p_{r(e) \cap Y}
-
\sum_{\substack{g \in \mathcal{G}^1 \\ s(g) \in r(e) \cap Y}} s_g s_g^*
-
\sum_{\substack{g \in \mathcal{G}^1 \\ s(g) \in r(e) \cap Y}} s_g s_g^*
+
\sum_{\substack{g \in \mathcal{G}^1 \\ s(g) \in r(e) \cap Y}} s_g s_g^* \\
&=
p_{r(e) \cap Y}
-
\sum_{\substack{g \in \mathcal{G}^1 \\ s(g) \in r(e) \cap Y}} s_g s_g^*
=
\psi(P_{r(f)}).
\end{align*}

\item We need to prove that, for all $f \in \mathcal{G}_X^1$,
\[
\psi(P_{s(f)}) \,\psi(S_f)\, \psi(S_f)^* \;=\; \psi(S_f)\, \psi(S_f)^*.
\]
To start, let us compute $\psi(S_f) \psi(S_f)^*$; this computation will be useful also in the next
item. There are two possible situations:

\begin{itemize}
\item \textbf{Case 1}. $f \in \mathcal{G}^1$. Using the definition of $\psi$ and performing some computations, we obtain
\begin{align*}
\psi(S_f)\psi(S_f)^*
&= s_f s_f^* - s_f\, p_{r(f)\cap Y}\, s_f^*
  + \sum_{\substack{g\in\mathcal{G}^1\\ s(g)\in r(f)\cap Y}}
    s_f s_g s_g^* s_f^* \\[2pt]
&= s_f\!\left(
    s_f^* - p_{r(f)\cap Y}\, s_f^*
    + \sum_{\substack{g\in\mathcal{G}^1\\ s(g)\in r(f)\cap Y}}
      s_g s_g^* s_f^*
  \right).
\end{align*}

\item \textbf{Case 2}. $f = e' \in C$. Using the definition of $\psi$ and performing some computations, we obtain
\[
\psi(S_{e'})\psi(S_{e'})^*
=
s_e p_{r(e) \cap Y} s_e^*
-
\sum_{\substack{g \in \mathcal{G}^1 \\ s(g) \in r(e) \cap Y}} s_e s_g s_g^* s_e^*
=
s_e \!\left(
p_{r(e) \cap Y} s_e^*
-
\sum_{\substack{g \in \mathcal{G}^1 \\ s(g) \in r(e) \cap Y}} s_g s_g^* s_e^*
\right)\!.
\]
\end{itemize}

In both cases, we now compute the product $\psi(P_{s(f)})\,\psi(S_f)\,\psi(S_f)^*$. Note that, for all $f \in \mathcal{G}_X^1$,
$s(f) \in \mathcal{G}^0$; moreover,
\[
\psi(P_{s(f)}) \;=\;
\begin{cases}
p_{s(f)}, & \text{if } s(f) \notin Y, \\[2pt]
\displaystyle \sum_{\substack{e \in \mathcal{G}^1 \\ s(e) = s(f)}} s_e s_e^*, & \text{if } s(f) \in Y.
\end{cases}
\]
Thus, in each case above we need to consider two subcases. The reader can check all these
possibilities by direct computations; they are very similar and use the facts that
$p_{s(f)} s_f = s_f$ and
$\sum_{\substack{e \in \mathcal{G}^1 \\ s(e) = s(f)}} s_e s_e^*\, s_f = s_f$.

The three cases above can be used to check that the $\psi(S_f)$ are partial isometries and that
$\psi(S_f) \psi(S_f)^* \, \psi(S_g) \psi(S_g)^* = 0$ if $f \neq g$. We leave this to the reader.

\item In this item we show that, for all $v \in \mathrm{Reg}(\mathcal{G}_X)=\mathrm{Reg}(\mathcal{G})$,
\[
\psi(P_v) \;=\; \sum_{\substack{f \in \mathcal{G}_X^1 \\ s(f) = v}} \psi(S_f)\, \psi(S_f)^*.
\]
For notational convenience, set $O = \{\,f \in \mathcal{G}^1 \mid r(f) \cap Y = \emptyset\,\}$ and
$Q = \{\,f \in \mathcal{G}^1 \mid r(f) \cap Y \neq \emptyset\,\}$ only for this item. Note that
$\mathcal{G}^1 = O \cup Q$, thus
\[
\mathcal{G}_X^1 = O \cup Q \cup C.
\]
Using the computations made in item~(3), we note that
\[
\sum_{\substack{f \in O \\ s(f) = v}} \psi(S_f) \psi(S_f)^*
=
\sum_{\substack{f \in O \\ s(f) = v}} s_f s_f^*,
\]
\[
\sum_{\substack{f \in Q \\ s(f) = v}} \psi(S_f) \psi(S_f)^*
=
\sum_{\substack{f \in Q \\ s(f) = v}}
\Big(
s_f s_f^*
-
s_f p_{r(f) \cap Y} s_f^*
+
\sum_{\substack{g \in \mathcal{G}^1 \\ s(g) \in r(f) \cap Y}}
s_f s_g s_g^* s_f^*
\Big),
\]
and
\[
\sum_{\substack{f' \in C \\ s(f') = v}} \psi(S_{f'}) \psi(S_{f'})^*
=
\sum_{\substack{f' \in C \\ s(f') = v}}
\Big(
s_f p_{r(f) \cap Y} s_f^*
-
\sum_{\substack{g \in \mathcal{G}^1 \\ s(g) \in r(f) \cap Y}}
s_f s_g s_g^* s_f^*
\Big).
\]
Using that $s(f') = s(f)$ for all $f \in \mathcal{G}^1$ with $r(f) \cap Y \neq \emptyset$, we can sum
the three equalities above to obtain
\begin{align*}
\sum_{\substack{f \in \mathcal{G}_X^1 \\ s(f) = v}} \psi(S_f) \psi(S_f)^*
&=
\sum_{\substack{f \in O \\ s(f) = v}} s_f s_f^*
\;+\;
\sum_{\substack{f \in Q \\ s(f) = v}} s_f s_f^* \\
&=
\sum_{\substack{f \in \mathcal{G}^1 \\ s(f) = v}} s_f s_f^*.
\end{align*}
Since $v \in \mathcal{G}^0$ and $\{v\} = \{v\} \cup \emptyset$, we have two options: $v \notin Y$ or
$v \in Y$. In the first case, $\psi(P_v) = p_v$ and, since $v \notin Y$ (i.e., $v \in X$),
$p_v = \sum_{\substack{f \in \mathcal{G}^1 \\ s(f) = v}} s_f s_f^*$. In the second case, when
$v \in Y$, we have $\psi(P_v) = \sum_{\substack{f \in \mathcal{G}^1 \\ s(f) = v}} s_f s_f^*$.
In both cases we obtain the desired result. With these four items, we conclude that $\psi$ extends
to a well-defined homomorphism.

To conclude the theorem, it remains to check that $\psi \circ \phi = \mathrm{id}_{C^*(\mathcal{G},X)}$.
First, for all $v \in \mathcal{G}^0$ we have $\psi \circ \phi(p_v) = p_v$; this is immediate if
$v \notin Y$, because in this case $\psi \circ \phi(p_v) = \psi(P_v) = p_v$. If $v \in Y$, then
\[
\psi \circ \phi(p_v)
=
\psi(P_v + P_{v'})
=
\sum_{\substack{e \in \mathcal{G}^1 \\ s(e) = v}} s_e s_e^*
\;+\;
\left( p_v - \sum_{\substack{e \in \mathcal{G}^1 \\ s(e) = v}} s_e s_e^* \right)
=
p_v.
\]
Next, for all $e \in \mathcal{G}^1$,
\begin{align*}
\psi \circ \phi(p_{r(e)})
&= \psi\big( P_{r(e)} + P_{(r(e) \cap Y)'} \big) \\
&= p_{r(e)} - p_{r(e) \cap Y}
+ \sum_{\substack{g \in \mathcal{G}^1 \\ s(g) \in r(e) \cap Y}} s_g s_g^*
+ \sum_{w \in r(e) \cap Y} q_w \\
&= p_{r(e)} - p_{r(e) \cap Y}
+ \sum_{\substack{g \in \mathcal{G}^1 \\ s(g) \in r(e) \cap Y}} s_g s_g^*
+ p_{r(e) \cap Y}
- \sum_{\substack{g \in \mathcal{G}^1 \\ s(g) \in r(e) \cap Y}} s_g s_g^* \\
&= p_{r(e)}.
\end{align*}
Finally, let $e \in \mathcal{G}^1$. If $r(e) \cap Y = \emptyset$, then
\[
\psi \circ \phi(s_e) = \psi(S_e) = s_e.
\]
If $r(e) \cap Y \neq \emptyset$, then
\begin{align*}
\psi \circ \phi(s_e)
&= \psi(S_e + S_{e'}) \\
&= s_e - s_e p_{r(e) \cap Y}
+ \sum_{\substack{g \in \mathcal{G}^1 \\ s(g) \in r(e) \cap Y}} s_e s_g s_g^*
\;+\;
s_e p_{r(e) \cap Y}
- \sum_{\substack{g \in \mathcal{G}^1 \\ s(g) \in r(e) \cap Y}} s_e s_g s_g^* \\
&= s_e.
\end{align*}
Since $\psi \circ \phi = \mathrm{id}_{C^*(\mathcal{G},X)}$ on the set of generators of
$C^*(\mathcal{G},X)$, the result follows.
\end{enumerate}
\end{proof}

For future reference and clarity, we record the following theorem, which is an immediate
consequence of Propositions~\ref{surjective homomorphism} and \ref{isomorphismrelative}.
This result generalizes \cite[Theorem~3.7]{Tom}.

\begin{theorem}\label{teo iso algebras ultragrafos e relativas}
Let $(\mathcal{G},X)$ be a relative ultragraph, and set
$Y := \mathrm{Reg}(\mathcal{G}) \setminus X$. If $r(e) \cap Y$ is finite for all
$e \in \mathcal{G}^1$, then the $*$-homomorphism
\[
\phi: C^*(\mathcal{G},X) \to C^*(\mathcal{G}_X)
\]
from Proposition~\ref{esfriou, vai dar tainha} is a $*$-isomorphism, with inverse given by the
$*$-homomorphism $\psi$ of Proposition~\ref{isomorphismrelative}.
\end{theorem}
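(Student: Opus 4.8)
The plan is to observe that the theorem is a purely formal consequence of the three propositions already established, so the only task is to assemble them correctly; no new analytic content is required. First I would recall that Proposition~\ref{esfriou, vai dar tainha} produces a genuine $*$-homomorphism $\phi : C^*(\mathcal{G},X) \to C^*(\mathcal{G}_X)$ with \emph{no} hypothesis on the ranges, so the existence of $\phi$ is unconditional; the finiteness assumption that $r(e) \cap Y$ is finite for all $e \in \mathcal{G}^1$ enters only to secure the two structural facts I need below.

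Next I would invoke the two finiteness-dependent results in turn. By Proposition~\ref{surjective homomorphism}, the hypothesis makes every canonical generator $P_B$ and $S_f$ of $C^*(\mathcal{G}_X)$ lie in $\operatorname{Im}(\phi)$, hence $\phi$ is surjective. Separately, Proposition~\ref{isomorphismrelative} constructs, under the same hypothesis (which is precisely what guarantees that the defining formula for $\psi$ involves only finite sums, and is therefore well defined), a $*$-homomorphism $\psi : C^*(\mathcal{G}_X) \to C^*(\mathcal{G},X)$ satisfying $\psi \circ \phi = \mathrm{id}_{C^*(\mathcal{G},X)}$. The existence of this left inverse immediately forces $\phi$ to be injective.

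To finish I would combine injectivity and surjectivity: a bijective $*$-homomorphism between $C^*$-algebras is automatically a $*$-isomorphism, since its set-theoretic inverse is again multiplicative and $*$-preserving. The one point deserving a line of argument is the identification of this inverse with $\psi$, i.e.\ upgrading the one-sided identity $\psi \circ \phi = \mathrm{id}$ to a two-sided one. For any $y \in C^*(\mathcal{G}_X)$, surjectivity lets me write $y = \phi(x)$, and then
\[
\phi\big(\psi(y)\big) = \phi\big(\psi(\phi(x))\big) = \phi\big((\psi\circ\phi)(x)\big) = \phi(x) = y,
\]
so $\phi \circ \psi = \mathrm{id}_{C^*(\mathcal{G}_X)}$ as well. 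Hence $\psi = \phi^{-1}$ and $\phi$ is the desired $*$-isomorphism. I do not anticipate any genuine obstacle here, as all the work has been discharged in the preceding propositions; the only subtlety worth flagging is this passage from a left inverse to a two-sided inverse, which I would handle by the displayed computation rather than by re-verifying $\phi \circ \psi = \mathrm{id}$ on generators.
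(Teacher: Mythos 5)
Your proposal is correct and follows essentially the same route as the paper, which records this theorem as an immediate consequence of Proposition~\ref{surjective homomorphism} (surjectivity) and Proposition~\ref{isomorphismrelative} (the left inverse $\psi$ with $\psi\circ\phi=\mathrm{id}$, giving injectivity). Your explicit upgrade of the one-sided identity to $\phi\circ\psi=\mathrm{id}$ via surjectivity is a sensible small elaboration of a step the paper leaves implicit, not a different argument.
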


\section{Uniqueness theorems for relative ultragraph $C^*$-algebras.}

In this section we establish uniqueness theorems for relative ultragraph $C^*$-algebras.
To this end, we analyze the relationship between cycles in $\mathcal{G}$ and in the associated
ultragraph $\mathcal{G}_X$, and introduce the notion of \emph{Relative Condition~(L)} adapted to
the relative setting. We then combine these ingredients with classical results for ultragraph
$C^*$-algebras and the isomorphism of
Theorem~\ref{teo iso algebras ultragrafos e relativas} to obtain our uniqueness theorems.

Recall that a \emph{cycle} in an ultragraph $\mathcal{G}$ is a finite path
$\alpha=\alpha_1 \cdots \alpha_n$, where each $\alpha_i$ is an edge, such that
$s(\alpha_{i+1}) \in r(\alpha_i)$ for every $i \in \{1,\dots,n-1\}$ and
$s(\alpha_1) \in r(\alpha_n)$. Following \cite{Tom2}, an \emph{exit} for a cycle $\alpha$ is
either a sink in $r(\alpha_i)$ for some $i \in \{1,\dots,n\}$, or an edge $e$ satisfying one of
the following:
\begin{itemize}
    \item $s(e) \in r(\alpha_i)$ and $e \neq \alpha_{i+1}$ for some $i \in \{1,\dots,n-1\}$;
    \item $s(e) \in r(\alpha_n)$ and $e \neq \alpha_1$.
\end{itemize}
An ultragraph is said to satisfy \textbf{Condition~(L)} if every cycle has an exit.

\begin{remark}\label{Same cycles}
Let $(\mathcal{G},X)$ be a relative ultragraph, and let $\mathcal{G}_X$ be the associated
ultragraph as in Definition~\ref{ultragrap GX}. Note that all vertices added to $\mathcal{G}$ to construct
$\mathcal{G}_X$ are sinks. Consequently, the sets of cycles in $\mathcal{G}$ and in
$\mathcal{G}_X$ coincide.
\end{remark}

It follows from Remark~\ref{Same cycles} that if $\mathcal{G}$ satisfies Condition~(L), then so
does $\mathcal{G}_X$. The converse, however, does not hold, as illustrated in the following
example:

\begin{example}
Let $\mathcal{G}$ be the graph with two vertices and two edges as follows:

\vspace{1cm}
\centerline{%
\setlength{\unitlength}{2cm}%
\begin{picture}(0,0)
\put(0,0){\circle*{0.08}}
\put(-0.2,0){$u$}
\put(0,0){\qbezier(0,0)(0.5,0.7)(1,0)}
\put(1,0){\circle*{0.08}}
\put(1.05,0){$v$}
\put(0.4,0.3){$>$}
\put(0.4,0.2){$e_1$}
\put(0,0){\qbezier(0,0)(0.5,-0.7)(1,0)}
\put(0.4,-0.5){$e_2$}
\put(0.4,-0.4){$<$}
\end{picture}}
\vspace{1.5cm}

Note that $\mathrm{Reg}(\mathcal{G})=\{u,v\}$ and set $X=\{u\}$. Following
Definition~\ref{ultragrap GX}, we obtain the associated ultragraph $\mathcal{G}_X$:

\vspace{1.5cm}
\centerline{%
\setlength{\unitlength}{2cm}%
\begin{picture}(0,0)
\put(0,0){\circle*{0.08}}
\put(-0.2,0){$u$}
\put(0,0){\qbezier(0,0)(0.5,0.7)(1,0)}
\put(1,0){\circle*{0.08}}
\put(1.05,0){$v$}
\put(0.4,0.3){$>$}
\put(0.4,0.2){$e_1$}
\put(0,0){\qbezier(0,0)(0.5,-0.7)(1,0)}
\put(0.4,-0.5){$e_2$}
\put(0.4,-0.4){$<$}
\put(0,0){\qbezier(0,0)(0.5,0.7)(1,0.5)}
\put(1,0.5){\circle*{0.08}}
\put(1.05,0.5){$v'$}
\put(0.7,0.7){$e_1'$}
\put(0.7,0.5){$>$}
\end{picture}}
\vspace{1.5cm}

Thus $\mathcal{G}$ does not satisfy Condition~(L), whereas the ultragraph $\mathcal{G}_X$
does satisfy Condition~(L).
\end{example}

The notion of Relative Condition~(L) for graphs was introduced in \cite{danielcristobal}.
We now extend this concept to the setting of ultragraphs.

\begin{definition}
Let $(\mathcal{G}, X)$ be a relative ultragraph and set $Y := \mathrm{Reg}(\mathcal{G}) \setminus X$.
We say that $\mathcal{G}$ satisfies the \emph{Relative Condition~(L)} if, for every cycle without
exits in $\mathcal{G}$, there exists an edge $e$ in the cycle such that
$r(e)\cap Y \neq \emptyset$.
\end{definition}

\begin{remark}
If $\alpha=\alpha_1\cdots \alpha_n$ is a cycle without exits in an ultragraph $\mathcal{G}$,
then for each $i$ the range $r(\alpha_i)$ is a singleton. Consequently, in the definition above
we may equivalently write $r(e)\in Y$ instead of $r(e)\cap Y\neq \emptyset$.
\end{remark}

Next, we relate exits in a relative ultragraph $(\mathcal{G},X)$ with exits in the ultragraph
$\mathcal{G}_X$.

\begin{lemma}\label{lemma relative condition L}
Let $(\mathcal{G}, X)$ be a relative ultragraph, let $Y=\mathrm{Reg}(\mathcal{G})\setminus X$,
and let $\mathcal{G}_X$ be the associated ultragraph as in Definition~\ref{ultragrap GX}.
\begin{enumerate}
    \item A cycle $\alpha=\alpha_1\cdots \alpha_n$ has no exit in $\mathcal{G}_X$ if and only if
    it has no exit in $\mathcal{G}$ and $r(\alpha_i)\cap Y\neq \emptyset$ for each
    $i\in\{1,\dots,n\}$.
    \item The ultragraph $\mathcal{G}_X$ satisfies Condition~(L) if and only if $\mathcal{G}$
    satisfies the Relative Condition~(L).
\end{enumerate}
\end{lemma}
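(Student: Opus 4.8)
The plan is to prove part~(1) by directly comparing the exits of a fixed cycle in $\mathcal{G}$ and in $\mathcal{G}_X$, and then to deduce part~(2) from part~(1) together with the definitions of Condition~(L) and the Relative Condition~(L). The first step is to record the structural reductions that make the two ultragraphs comparable. By Remark~\ref{Same cycles} every copy edge $e'$ terminates in the adjoined sink set $Y'$, so no cycle can traverse a copy edge; hence the cycles of $\mathcal{G}_X$ coincide with those of $\mathcal{G}$, and a fixed cycle $\alpha=\alpha_1\cdots\alpha_n$ may be viewed in both and analyzed edge by edge. Moreover $s_X$ and $r_X$ restrict to $s$ and $r$ on $\mathcal{G}^1$, and a vertex of $\mathcal{G}^0$ emits a copy edge only when it already emits an edge of $\mathcal{G}^1$; thus a vertex is a sink in $\mathcal{G}$ if and only if it is a sink in $\mathcal{G}_X$. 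It follows that every exit of $\alpha$ in $\mathcal{G}$ remains an exit in $\mathcal{G}_X$, which already gives one half of the forward implication: if $\alpha$ has no exit in $\mathcal{G}_X$, then it has no exit in $\mathcal{G}$.

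The substance of part~(1) is to decide, assuming $\alpha$ is exit-free in $\mathcal{G}$, exactly when passing to $\mathcal{G}_X$ keeps it exit-free. Here I would invoke the Remark following the definition of the Relative Condition~(L): an exit-free cycle in $\mathcal{G}$ has each range $r(\alpha_i)$ equal to a singleton $\{s(\alpha_{i+1})\}$ (indices mod $n$), so in $\mathcal{G}^1$ that vertex emits only $\alpha_{i+1}$. The sole edges that $\mathcal{G}_X$ adjoins at this vertex are the copy edges $\alpha_{i+1}'$, present exactly when $r(\alpha_{i+1})\cap Y\neq\emptyset$, while no new sink appears inside $r_X(\alpha_i)=r(\alpha_i)$. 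Propagating these observations around the whole cycle is what decides whether an exit is created, and assembling the bookkeeping yields the stated equivalence: $\alpha$ is exit-free in $\mathcal{G}_X$ if and only if it is exit-free in $\mathcal{G}$ and $r(\alpha_i)\cap Y\neq\emptyset$ for every $i\in\{1,\dots,n\}$.

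Part~(2) is then formal: $\mathcal{G}_X$ satisfies Condition~(L) exactly when it has no exit-free cycle, and substituting the characterization of part~(1) and contraposing over all cycles yields precisely the Relative Condition~(L) for $\mathcal{G}$. A short check that the quantifiers line up over the edges of $\mathcal{G}$ and their copies then completes the equivalence ``$\mathcal{G}_X$ satisfies Condition~(L) if and only if $\mathcal{G}$ satisfies the Relative Condition~(L)''.

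The step I expect to be the main obstacle is the range-condition determination in part~(1). The delicate point is that each copy edge $\alpha_i'$ shares its source with $\alpha_i$ and so interacts with the exit test at the \emph{preceding} step of the cycle; one must treat the cyclic wrap-around (the index $i=1$ tested against the last range $r(\alpha_n)$) carefully and settle unambiguously the direction of the condition on $r(\alpha_i)\cap Y$. Getting this right is exactly what makes the contraposition in part~(2) agree with the Relative Condition~(L); once the singleton-range reduction is in place, the remaining case checks are routine.
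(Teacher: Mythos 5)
Your structural reductions are fine (cycles of $\mathcal{G}$ and $\mathcal{G}_X$ coincide, $s_X,r_X$ restrict to $s,r$ on $\mathcal{G}^1$, no new sinks appear in $\mathcal{G}^0$, hence ``no exit in $\mathcal{G}_X$ $\Rightarrow$ no exit in $\mathcal{G}$''), but the decisive step --- the direction of the range condition --- is exactly where your argument breaks, and you explicitly flagged it as delicate and then asserted the wrong direction without carrying out the bookkeeping. Do the bookkeeping: if $r(\alpha_{i+1})\cap Y\neq\emptyset$, then the copy edge $\alpha_{i+1}'$ exists in $\mathcal{G}_X$ with $s_X(\alpha_{i+1}')=s(\alpha_{i+1})\in r(\alpha_i)$ and $\alpha_{i+1}'\neq\alpha_{i+1}$, so $\alpha_{i+1}'$ \emph{is} an exit of $\alpha$ in $\mathcal{G}_X$ (and similarly $\alpha_1'$ tested against $r(\alpha_n)$). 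Hence exit-freeness in $\mathcal{G}_X$ forces $r(\alpha_i)\cap Y=\emptyset$ for every $i$ --- the \emph{opposite} of the condition you conclude. The converse also works only with this sign: if $\alpha$ is exit-free in $\mathcal{G}$ and every $r(\alpha_i)\cap Y=\emptyset$, then an exit in $\mathcal{G}_X$ would have to be an edge $f$ with $s_X(f)\in r_X(\alpha_i)=r(\alpha_i)$ and $f\neq\alpha_{i+1}$; exit-freeness in $\mathcal{G}$ forces $f=\alpha_{i+1}'$, which does not exist. A one-loop test already refutes the statement as you prove it: take $\mathcal{G}^0=\{v\}$, $\mathcal{G}^1=\{e\}$, $r(e)=\{v\}$, $X=\emptyset$; the loop is exit-free in $\mathcal{G}$ and $r(e)\cap Y\neq\emptyset$, yet $e'$ is an exit in $\mathcal{G}_X$. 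The printed ``$\neq\emptyset$'' in item (1) is evidently a typo for ``$=\emptyset$'': that is the form the paper itself uses later (in the proof of Proposition~\ref{injectivitybs}, a cycle without exits in $\mathcal{G}_X$ is asserted to have $r(\alpha_i)\notin Y$ for all $i$), and it is the only form compatible with the worked two-vertex example following Remark~\ref{Same cycles}.

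The error propagates into your part (2). Contraposing the ``$\neq\emptyset$'' version of (1) over all cycles yields ``every exit-free cycle in $\mathcal{G}$ contains an edge with $r(e)\cap Y=\emptyset$,'' which is not the Relative Condition~(L); only the corrected ``$=\emptyset$'' version produces ``every exit-free cycle in $\mathcal{G}$ contains an edge with $r(e)\cap Y\neq\emptyset$.'' So the ``short check that the quantifiers line up'' that you defer is precisely the point at which your write-up fails. For comparison: the paper gives no argument here at all (item (1) is left to the reader and item (2) is said to follow immediately), so the entire content of a proof is the edge-by-edge bookkeeping you skipped; done honestly, it proves the lemma and simultaneously corrects the sign in its statement.
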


\begin{proof}
The proof of the first item follows directly from the definition of the ultragraph $\mathcal{G}_X$,
and we leave it to the reader. The second item follows immediately from the first.
\end{proof}

Before proving the first uniqueness theorem, we present an auxiliary result.

\begin{lemma}\label{Aux2condL}
Let $(\G,X)$ be a relative ultragraph and set $Y=\mathrm{Reg}(\G)\setminus X$. Suppose that
$r(e)\cap Y$ is finite for all edges $e \in \G^1$. If $Z=A \cup (B \cap Y)'$, as in
Corollary~\ref{decomposition2}, then $p_A - p_{A \cap Y}$, $s_e s_e^*$ with $e \in \G^1$, and
$q_v$ with $v \in B \cap Y$, are mutually orthogonal projections.
\end{lemma}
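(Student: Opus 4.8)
The plan is to check two things: that each listed element is a projection, and that the product of any two distinct elements of the family vanishes. Throughout I would specialize Lemma~\ref{contas1} to $Z_1 = Z_2 = Z$, so that its recorded identities apply verbatim with $A_1 = A_2 = A$ and $B_1 = B_2 = B$. The relevant $s_e s_e^*$ are those with $s(e) \in A \cap Y$, matching the expression used to define $\psi(P_Z)$ in Proposition~\ref{isomorphismrelative}.

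For the projection claims, the element $p_A - p_{A \cap Y}$ is self-adjoint, and squaring it while using the Boolean relation $p_A p_{A \cap Y} = p_{A \cap (A \cap Y)} = p_{A \cap Y}$ (and the symmetric product) collapses the square back to $p_A - p_{A \cap Y}$; since $A \cap Y \subseteq A$ this is moreover a genuine positive projection. Each $s_e s_e^*$ is a projection because $s_e$ is a partial isometry, and each $q_v$ is a self-adjoint idempotent by item~(6) of Lemma~\ref{contas1}, which gives $q_v^2 = q_v$.

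For the off-diagonal products I would dispatch each pair using the corresponding identity of Lemma~\ref{contas1}. The products $(p_A - p_{A \cap Y})\, s_e s_e^*$ and $(p_A - p_{A \cap Y})\, q_v$ vanish by items~(1) and~(3) respectively, which assert exactly that $p_A$ and $p_{A \cap Y}$ act identically on these terms; the products $s_e s_e^*\, q_v$ vanish by item~(5); the products $q_v q_w$ with $v \neq w$ vanish by item~(6); and $s_e s_e^*\, s_f s_f^* = 0$ for $e \neq f$ is immediate from the orthogonality of the ranges of the partial isometries. Since every element in sight is self-adjoint, taking adjoints handles the symmetric products automatically.

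The computations are elementary and essentially already contained in Lemma~\ref{contas1}; the one conceptual point I would flag as the crux is that the cancellations in $(p_A - p_{A \cap Y})\, s_e s_e^* = 0$ and $(p_A - p_{A \cap Y})\, q_v = 0$ rely on $s(e) \in Y$ and $v \in Y$. This is precisely what forces membership in $A$ to coincide with membership in $A \cap Y$, so that the two summands of $p_A - p_{A \cap Y}$ act identically and cancel; without this hypothesis the difference would fail to annihilate the term. No deeper obstacle arises beyond this bookkeeping.
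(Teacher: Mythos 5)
Your proposal is correct and follows essentially the same route as the paper: the paper's proof likewise verifies the projection property of $p_A - p_{A\cap Y}$ via the Boolean relations, cites Lemma~\ref{contas1} for $q_vq_w=\delta_{vw}q_v$, and establishes the remaining orthogonality relations by the same cancellations (with the same implicit restriction to edges $e$ with $s(e)\in A\cap Y$, which you correctly identify as the crux). The only cosmetic difference is that you invoke the summed identities of Lemma~\ref{contas1} where the paper redoes the term-by-term computations directly, but those computations are exactly what underlies that lemma, so the arguments coincide.
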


\begin{proof}
By a straightforward computation one sees that $p_A - p_{A \cap Y}$ and $s_e s_e^*$ are projections.
Of course $s_e s_e^* s_f s_f^* = \delta_{ef}\, s_e s_e^*$. We proved that
$q_v q_w = \delta_{vw}\, q_v$ in Lemma~\ref{contas1}.

Note that, for all $e \in \G^1$ with $s(e) \in A \cap Y$, we have
$p_A s_e s_e^* = s_e s_e^*$ and $p_{A \cap Y} s_e s_e^* = s_e s_e^*$, showing that
\[
\bigl( p_A - p_{A \cap Y} \bigr) s_e s_e^* = 0.
\]
Also, let $v \in B \cap Y$. Then
\[
p_A \!\left(p_v - \sum_{\substack{f \in \G^1 \\ s(f) = v}} s_f s_f^* \right)
=
\begin{cases}
p_v - \displaystyle\sum_{\substack{f \in \G^1 \\ s(f) = v}} s_f s_f^*, & \text{if } v \in A,\\[10pt]
0, & \text{if } v \notin A,
\end{cases}
=
p_{A \cap Y} \!\left(p_v - \sum_{\substack{f \in \G^1 \\ s(f) = v}} s_f s_f^* \right),
\]
showing that
\[
(p_A - p_{A \cap Y})\, q_v
=
(p_A - p_{A \cap Y}) \!\left(p_v - \sum_{\substack{f \in \G^1 \\ s(f) = v}} s_f s_f^* \right)
= 0.
\]
Finally, if $e \in \G^1$ is such that $s(e) \in A \cap Y$ and $v \in B \cap Y$, then
\[
s_e s_e^* \!\left(p_v - \sum_{\substack{f \in \G^1 \\ s(f) = v}} s_f s_f^* \right) = 0.
\]
Indeed, if $v \neq s(e)$ this is clear. If $v = s(e)$, the left-hand side equals
\[
s_e s_e^* - \sum_{\substack{f \in \G^1 \\ s(f) = v}} s_e s_e^* s_f s_f^*
= s_e s_e^* - s_e s_e^* = 0.
\]
\end{proof}

The following remark will be used in the next theorem.

\begin{remark}\label{Aux1condL}
Let $\mathcal{A}$ and $\mathcal{B}$ be $C^*$-algebras and let
$\Phi:\mathcal{A}\to\mathcal{B}$ be a $*$-homomorphism. If $p_1,\ldots,p_N$ are mutually
orthogonal projections in $\mathcal{A}$ and $\Phi(p_i)\neq 0$ for at least one
$i \in \{1,\ldots,N\}$, then
\[
0 \neq \Phi(p_i) = \Phi(p_i)\,\Phi(p_1 + p_2 + \cdots + p_N),
\]
and so $\Phi(p_1 + p_2 + \cdots + p_N) \neq 0$.
\end{remark}

\begin{theorem}\label{Relative condition (L) implies injective}
Let $(\mathcal{G},X)$ be a relative ultragraph and set $Y=\mathrm{Reg}(\mathcal{G})\setminus X$,
with $Y\cap r(e)$ finite for each edge $e\in\mathcal{G}^1$. Let $\mathcal{B}$ be a $C^*$-algebra
and let $\Phi:C^*(\mathcal{G},X)\to \mathcal{B}$ be a $*$-homomorphism. If
\begin{enumerate}
    \item $\Phi(p_v)\neq 0$ for all $v\notin Y$;
    \item for each $e\in\mathcal{G}^1$ with $s(e)\in Y$ we have $\Phi(s_e s_e^*)\neq 0$;
    \item for all $v\in Y$, $\Phi(q_v)\neq 0$ \ (where $q_v$ is as in Lemma~\ref{contas1}),
\end{enumerate}
then $\Phi\circ\psi(P_Z)\neq 0$ for every nonempty $Z\in\mathcal{E}_X$, where $\psi$ is the
isomorphism obtained in Theorem~\ref{teo iso algebras ultragrafos e relativas}. Moreover, if
$\mathcal{G}$ satisfies the Relative Condition~(L), then $\Phi$ is injective.
\end{theorem}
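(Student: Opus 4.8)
The plan is to split the statement into two parts: first showing $\Phi\circ\psi(P_Z)\neq 0$ for every nonempty $Z\in\mathcal{E}_X$, and then deducing injectivity of $\Phi$ under Relative Condition~(L). For the first part, I would take any nonempty $Z\in\mathcal{E}_X$ and invoke Corollary~\ref{decomposition2} to write it uniquely as $Z=A\cup(B\cap Y)'$ with $A,B\in\mathcal{E}$ and $(B\cap Y)'$ finite. By definition of $\psi$ (from Proposition~\ref{isomorphismrelative}),
\[
\psi(P_Z)=p_A-p_{A\cap Y}+\sum_{s(e)\in A\cap Y}s_es_e^*+\sum_{w\in B\cap Y}q_w,
\]
which by Lemma~\ref{Aux2condL} is a finite sum of mutually orthogonal projections. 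The key observation is then that hypotheses (1)--(3) guarantee that $\Phi$ is nonzero on at least one of these summands whenever $Z\neq\emptyset$: if $Z\neq\emptyset$ then at least one of the constituent pieces is nonzero, and I would argue that $\Phi(p_A-p_{A\cap Y})\neq 0$ reduces to $\Phi(p_v)\neq 0$ for some $v\in A\setminus Y$ (covered by (1)), while the $s_es_e^*$ terms are covered by (2) and the $q_w$ terms by (3). Applying Remark~\ref{Aux1condL} to this orthogonal family then yields $\Phi\circ\psi(P_Z)\neq 0$.

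The mildly delicate point in the first part is ensuring that a nonempty $Z$ forces a nonzero summand after applying $\Phi$; I would handle this by checking that if $A\setminus Y\neq\emptyset$ then $p_A-p_{A\cap Y}$ dominates some $p_v$ with $v\notin Y$ (using that $\mathcal{E}$ is generated by singletons and ranges, so $A$ contains some vertex), and otherwise the nonemptiness of $Z$ must come from the $A\cap Y$ edge-terms or the $(B\cap Y)'$ part, both of which are controlled by (2) and (3). The orthogonality from Lemma~\ref{Aux2condL} is exactly what makes Remark~\ref{Aux1condL} applicable, so no term can cancel another.

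For the second part, I would transport the problem to the ultragraph $\mathcal{G}_X$ via the isomorphism $\phi$ of Theorem~\ref{teo iso algebras ultragrafos e relativas}, whose inverse is $\psi$. Since $\mathcal{G}$ satisfies Relative Condition~(L), Lemma~\ref{lemma relative condition L}(2) tells us that $\mathcal{G}_X$ satisfies Condition~(L). The strategy is to define $\Phi':=\Phi\circ\psi:C^*(\mathcal{G}_X)\to\mathcal{B}$ and apply the Cuntz--Krieger uniqueness theorem for ultragraph $C^*$-algebras satisfying Condition~(L): such a theorem asserts that a representation is injective provided it is nonzero on every projection $P_Z$ with $\emptyset\neq Z\in\mathcal{E}_X$ (equivalently, nonzero on each vertex projection). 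The first part of the theorem supplies precisely this nonvanishing hypothesis, namely $\Phi'(P_Z)=\Phi\circ\psi(P_Z)\neq 0$ for all nonempty $Z$. Hence $\Phi'$ is injective, and since $\psi=\phi^{-1}$ is an isomorphism, $\Phi=\Phi'\circ\phi$ is a composition of injective maps, so $\Phi$ is injective.

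The main obstacle is the first part: verifying that the hypotheses (1)--(3) genuinely cover every way a nonempty $Z$ can arise, since the decomposition $Z=A\cup(B\cap Y)'$ may have $A$ meeting $Y$ nontrivially and one must be careful that $p_A-p_{A\cap Y}$ can vanish (when $A\subseteq Y$) without forcing $\Phi\circ\psi(P_Z)=0$. Resolving this requires a short case analysis on which of $A\setminus Y$, the edges with source in $A\cap Y$, and $B\cap Y$ are nonempty, using that the orthogonal decomposition of $\psi(P_Z)$ exhausts all contributions. Once this bookkeeping is done, the appeal to the ultragraph Condition~(L) uniqueness theorem through the isomorphism $\phi$ is routine.
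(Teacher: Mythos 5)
Your proposal is correct and follows essentially the same route as the paper: the same decomposition $Z=A\cup(B\cap Y)'$ and formula for $\psi(P_Z)$, the same use of Lemma~\ref{Aux2condL} and Remark~\ref{Aux1condL} to reduce nonvanishing to a single orthogonal summand, and the same transport via $\psi$ to $\mathcal{G}_X$, Lemma~\ref{lemma relative condition L}, and Tomforde's Cuntz--Krieger uniqueness theorem. Your case split ($A\setminus Y\neq\emptyset$, then $A\subseteq Y$ nonempty, then $A=\emptyset$) is a trivially reorganized version of the paper's ($A\cap Y=\emptyset$, $A\cap Y\neq\emptyset$, $A=\emptyset$), with the only implicit point being that $Y\subseteq\mathrm{Reg}(\mathcal{G})$ guarantees each $v\in A\cap Y$ actually emits an edge, which the paper states explicitly.
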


\begin{proof}
Using the definition of $\psi$ and writing $Z = A \cup (B \cap Y)'$, we obtain
\[
\Phi \circ \psi(P_Z)
=
\Phi\!\left(
p_A - p_{A \cap Y}
+ \sum_{\substack{e \in \mathcal{G}^1 \\ s(e) \in A \cap Y}} s_e s_e^*
+ \sum_{v \in B \cap Y} q_v
\right).
\]

We will show that $\Phi \circ \psi(P_Z) \neq 0$ whenever $Z$ is non-empty.
\begin{itemize}
    \item If $A \neq \emptyset$ and $A \cap Y = \emptyset$, then $p_A - p_{A \cap Y} = p_A$. Note that $\Phi(p_A) \neq 0$, because if $v \in A$ then
    \[
    0 \neq \Phi(p_v) = \Phi(p_v p_A) = \Phi(p_v)\,\Phi(p_A),
    \]
    hence $\Phi(p_A) \neq 0$, and thus $\Phi(p_A - p_{A \cap Y}) = \Phi(p_A) \neq 0$.
    \item If $A \neq \emptyset$ and $A \cap Y \neq \emptyset$, choose $v \in A \cap Y$. Since $Y \subseteq \mathrm{Reg}(\mathcal{G})$, pick $e \in \mathcal{G}^1$ with $s(e) = v \in A \cap Y$; by hypothesis, $\Phi(s_e s_e^*) \neq 0$.
    \item If $A = \emptyset$, then $B \cap Y \neq \emptyset$, so there exists $v \in B \cap Y$ with $\Phi(q_v) \neq 0$.
\end{itemize}

In each of the three situations, Lemma~\ref{Aux2condL} and Remark~\ref{Aux1condL} imply that
$\Phi \circ \psi(P_Z) \neq 0$ for every non-empty $Z \in \mathcal{E}_X$. Moreover, if
$\mathcal{G}$ satisfies the Relative Condition~(L), then by
Lemma~\ref{lemma relative condition L} we have that $\mathcal{G}_X$ satisfies Condition~(L).
Hence the hypotheses of \cite[Theorem~6.7]{Tom2} are satisfied, showing that $\Phi \circ \psi$
is injective. Since $\psi$ is an isomorphism, it follows that $\Phi$ is injective.
\end{proof}

The next theorem fully characterizes the injectivity of $*$-homomorphisms of relative ultragraph
$C^*$-algebras.

\begin{theorem}
Let $(\mathcal{G},X)$ be a relative ultragraph and set $Y=\mathrm{Reg}(\mathcal{G})\setminus X$,
with $Y\cap r(e)$ finite for each edge $e\in\mathcal{G}^1$. Let $\mathcal{B}$ be a $C^*$-algebra
and let $\Phi:C^*(\mathcal{G},X)\to \mathcal{B}$ be a $*$-homomorphism. Then $\Phi$ is injective
if and only if the following conditions hold:
\begin{enumerate}
    \item $\Phi(p_v)\neq 0$ for all $v\notin Y$;
    \item for each $e\in\mathcal{G}^1$ with $s(e)\in Y$ we have $\Phi(s_e s_e^*)\neq 0$;
    \item for all $v\in Y$, $\Phi(q_v)\neq 0$ \ (where $q_v$ is as in Lemma~\ref{contas1});
    \item for any simple cycle $\alpha=\alpha_1\ldots \alpha_n$ with no exit in $\mathcal{G}$ and
    such that $r(\alpha_i)\notin Y$ for all $i=1,\ldots,n$, the spectrum of $\Phi(s_\alpha)$
    contains the unit circle.
\end{enumerate}
\end{theorem}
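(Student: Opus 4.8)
The plan is to establish both implications, leveraging the isomorphism $\psi: C^*(\mathcal{G}_X) \to C^*(\mathcal{G},X)$ from Theorem~\ref{teo iso algebras ultragrafos e relativas} to transfer the problem to the ordinary ultragraph $\mathcal{G}_X$, where a full characterization of injectivity is presumably available (analogous to the Cuntz--Krieger uniqueness theorem for ultragraphs). For the \emph{necessity} of conditions (1)--(3), I would argue contrapositively: each of $p_v$ (for $v \notin Y$), $s_e s_e^*$ (for $s(e) \in Y$), and $q_v$ (for $v \in Y$) is a nonzero element of $C^*(\mathcal{G},X)$, so if any were sent to $0$ by $\Phi$, then $\Phi$ would fail to be injective. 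The nonvanishing of these elements in $C^*(\mathcal{G},X)$ itself can be checked by exhibiting a faithful representation (for instance, via a branching system or the universal representation) in which they act nontrivially. The necessity of condition (4) is the subtler direction: for a simple cycle $\alpha$ with no exit and $r(\alpha_i) \notin Y$ for all $i$, Lemma~\ref{lemma relative condition L}(1) shows $\alpha$ is a cycle \emph{without exit} in $\mathcal{G}_X$; the corner of $C^*(\mathcal{G}_X)$ (equivalently $C^*(\mathcal{G},X)$) determined by such a cycle is isomorphic to $M_n \otimes C(\mathbb{T})$ or $C(\mathbb{T})$, and injectivity on this corner forces the spectrum of $\Phi(s_\alpha)$ to contain all of $\mathbb{T}$.

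For the \emph{sufficiency} direction, the strategy is to combine Theorem~\ref{Relative condition (L) implies injective} with a decomposition of cycles according to whether they satisfy the Relative Condition~(L). Conditions (1)--(3) already guarantee, by Theorem~\ref{Relative condition (L) implies injective}, that $\Phi \circ \psi(P_Z) \neq 0$ for every nonempty $Z \in \mathcal{E}_X$, i.e., $\Phi \circ \psi$ is faithful on the diagonal subalgebra of $C^*(\mathcal{G}_X)$. The remaining obstacle is that $\mathcal{G}_X$ need not satisfy Condition~(L): by Lemma~\ref{lemma relative condition L}(1), the cycles without exit in $\mathcal{G}_X$ are exactly the simple cycles $\alpha$ in $\mathcal{G}$ with no exit and $r(\alpha_i) \cap Y \neq \emptyset$ for all $i$ --- but these are precisely \emph{excluded} by the hypothesis $r(\alpha_i) \notin Y$ in condition (4). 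Wait: one must be careful here, since $r(\alpha_i) \notin Y$ (meaning $r(\alpha_i)$ is not a vertex lying in $Y$) and $r(\alpha_i) \cap Y \neq \emptyset$ are complementary for singleton ranges. Thus the cycles covered by condition (4) are exactly those along which $\mathcal{G}_X$ \emph{also} has no exit, and condition (4) supplies the missing spectral information at precisely these cycles.

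Concretely, to prove sufficiency I would invoke a generalized Cuntz--Krieger uniqueness theorem for ultragraph $C^*$-algebras that does not assume Condition~(L) --- one that states a representation is faithful provided it is faithful on the diagonal \emph{and} the spectrum of the image of each cycle without exit contains the unit circle. Applying this to the composite $\Phi \circ \psi: C^*(\mathcal{G}_X) \to \mathcal{B}$, conditions (1)--(3) secure diagonal faithfulness (via Theorem~\ref{Relative condition (L) implies injective}), Remark~\ref{Same cycles} identifies the cycles of $\mathcal{G}_X$ with those of $\mathcal{G}$, and condition (4) together with Lemma~\ref{lemma relative condition L} handles exactly the cycles without exit in $\mathcal{G}_X$, where one checks $s_\alpha$ in $C^*(\mathcal{G}_X)$ corresponds to $\psi(S_\alpha)$ and $\Phi(s_\alpha)$ has full circle spectrum. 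It then follows that $\Phi \circ \psi$ is injective, and since $\psi$ is an isomorphism, $\Phi$ is injective.

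The main obstacle I anticipate is \emph{locating and correctly applying the non-Condition-(L) uniqueness theorem} in the ultragraph setting: the classical reference \cite{Tom2} gives a uniqueness theorem under Condition~(L), but the spectral refinement for cycles without exit requires a more delicate gauge-invariance or corner-analysis argument. I would need to verify that the spectral hypothesis on $\Phi(s_\alpha)$ translates correctly under the identification $\Phi(s_\alpha) = \Phi \circ \psi(S_\alpha)$ (using that $\psi(S_\alpha) = s_\alpha$ for cycles lying in $\mathcal{G}^1$), and that the interplay between the two notions "$r(\alpha_i) \notin Y$" and "$\alpha$ has no exit in $\mathcal{G}_X$" is handled without sign or complementation errors. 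Care is also needed at sinks in ranges, but since a cycle without exit has singleton ranges containing no sinks, this case does not arise along the relevant cycles.
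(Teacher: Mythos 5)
Your proposal is correct and takes essentially the same route as the paper: transfer through $\psi$, diagonal faithfulness from conditions (1)--(3) via Theorem~\ref{Relative condition (L) implies injective}, and then the generalized (non-Condition~(L)) uniqueness theorem you hypothesize, which is precisely \cite[Theorem~7.4]{DDH} --- the paper cites it for both sufficiency and the necessity of (4) (where you instead sketch an equivalent corner argument), so the ``main obstacle'' you flag is already settled in the literature. Your hesitation over Lemma~\ref{lemma relative condition L}(1) is also well-founded: as printed the lemma says $r(\alpha_i)\cap Y\neq\emptyset$, but the correct condition (and the one the paper's own proof implicitly uses) is $r(\alpha_i)\cap Y=\emptyset$, so the no-exit cycles of $\mathcal{G}_X$ are exactly those covered by condition (4), as you ultimately concluded.
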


\begin{proof}
Suppose that $\Phi$ is injective. Since $p_A$ (for all $A \in \mathcal{E}$) and $s_e s_e^*$ (for
$e \in \mathcal{G}^1$) are nonzero elements, items~(1) and~(2) follow. Item~(3) follows because
$\Phi \circ \psi$ is injective (where $\psi$ is as in
Theorem~\ref{teo iso algebras ultragrafos e relativas}) and $P_{v'} \neq 0$, hence
\[
0 \neq \Phi \circ \psi(P_{v'}) = \Phi(q_v).
\]
Moreover, since $\Phi \circ \psi$ is injective, we may apply \cite[Theorem~7.4]{DDH} to deduce
that the \emph{spectrum} of $\Phi \circ \psi(S_{\alpha})$ contains the unit circle whenever
$\alpha$ is a simple cycle with no exit in $\mathcal{G}$ and $r(\alpha_i) \notin Y$ for all $i$.
By the definition of $\psi$,
\[
\Phi \circ \psi(S_{\alpha})
= \Phi \circ \psi (S_{\alpha_1} \cdots S_{\alpha_n})
= \Phi(s_{\alpha_1} \cdots s_{\alpha_n})
= \Phi(s_{\alpha}),
\]
so item~(4) holds.

\smallskip
For the converse, suppose that the four conditions hold. By
Theorem~\ref{Relative condition (L) implies injective}, the first three conditions ensure that
$\Phi \circ \psi (P_Z) \neq 0$ for all $Z \in \mathcal{E}_X$. If $\alpha$ is a cycle with no
exit in $\mathcal{G}_X$, then by item~(1) of Lemma~\ref{lemma relative condition L} and by~(4),
the spectrum of $\Phi \circ \psi(S_{\alpha}) = \Phi(s_{\alpha})$ contains the unit circle.
Therefore, by \cite[Theorem~7.4]{DDH}, $\Phi \circ \psi$ is injective, and since $\psi$ is an
isomorphism, it follows that $\Phi$ is injective.
\end{proof}


\section{Relative branching systems and properties of $C^*(\G,X)$.}\label{BSU}
Our main goal in this section is to relate the theory of branching systems for ultragraphs—which
yields representations of $C^*(\mathcal{G})$ (see \cite{DDH})—to a parallel theory, which we call
\emph{relative branching systems}, that yields representations of $C^*(\mathcal{G},X)$. This
connection is mediated by the isomorphism of
Theorem~\ref{teo iso algebras ultragrafos e relativas}; in particular, the uniqueness theorems
proved earlier admit corresponding formulations in the setting of representations arising from relative branching systems.

\begin{definition}\label{relativebs}
Let $(\mathcal{G},X)$ be a relative ultragraph, let $(\mathcal{X},\mu)$ be a measure space, and
let $\{R_e,D_A\}_{e\in \mathcal{G}^1,\;A\in \mathcal{E}}$ be a family of measurable subsets of
$\mathcal{X}$. Suppose that
\begin{enumerate}
\item\label{R_e cap R_f =emptyset if e neq f}
$R_e \cap R_f \overset{\mu\text{-a.e.}}{=} \emptyset$ if $e \neq f \in \mathcal{G}^1$.
\item
$D_\emptyset=\emptyset$, \quad
$D_A \cap D_B \overset{\mu\text{-a.e.}}{=} D_{A \cap B}$, \quad
$D_A \cup D_B \overset{\mu\text{-a.e.}}{=} D_{A \cup B}$ \ for all $A,B \in \mathcal{E}$.
\item
$R_e \overset{\mu\text{-a.e.}}{\subseteq} D_{s(e)}$ for all $e\in \mathcal{G}^1$.
\item\label{D_v=cup_{e in s^{-1}(v)}R_e}
$D_v \overset{\mu\text{-a.e.}}{=} \displaystyle\bigcup_{e \in s^{-1}(v)} R_e$ for all $v \in X$.
\item
For each $e\in \mathcal{G}^1$, there exist measurable maps
$f_e: D_{r(e)} \to R_e$ and $f_e^{-1}: R_e \to D_{r(e)}$ such that
$f_e \circ f_e^{-1} \overset{\mu\text{-a.e.}}{=} \mathrm{id}_{R_e}$ and
$f_e^{-1} \circ f_e \overset{\mu\text{-a.e.}}{=} \mathrm{id}_{D_{r(e)}}$; moreover
$\mu \circ f_e \ll \mu$ on $D_{r(e)}$ and $\mu \circ f_e^{-1} \ll \mu$ on $R_e$.
\end{enumerate}
Denote the Radon–Nikodym derivatives by
\[
\Phi_{f_e} \;:=\; \frac{d(\mu \circ f_e)}{d\mu}\ \text{ on } D_{r(e)},
\qquad
\Phi_{f_e^{-1}} \;:=\; \frac{d(\mu \circ f_e^{-1})}{d\mu}\ \text{ on } R_e.
\]
We call $\{R_e,D_A,f_e\}_{e \in \mathcal{G}^1,\;A \in \mathcal{E}}$ a
\emph{relative $(\mathcal{G},X)$-branching system} on $(\mathcal{X},\mu)$.
\end{definition}

\begin{remark}
If $X=\mathrm{Reg}(\mathcal{G})$ in Definition~\ref{relativebs}, we obtain a
$\mathcal{G}$-branching system as in \cite{DDH}. The only difference from the original definition
is that condition~\eqref{D_v=cup_{e in s^{-1}(v)}R_e} is required to hold only for the selected
set of regular vertices $X$.
\end{remark}


Let $\mathcal{G}$ be an ultragraph, and let
$\{R_e, D_A, f_e\}_{e \in \mathcal{G}^1,\, A \in \mathcal{E}}$ be a
$\mathcal{G}$-branching system on $(\mathcal{X}, \mu)$.
Since the domains of $\Phi_{f_e^{-1}}$ and $f_e^{-1}$ both equal $R_e$, we may regard them as
measurable maps on $\mathcal{X}$ by extending them to be $0$ outside $R_e$. Thus, for each
$\phi \in \mathcal{L}^2(\mathcal{X}, \mu)$, we can consider the function
\[
\Phi_{f_e^{-1}}^{1/2}\, (\phi \circ f_e^{-1})\, \chi_{R_e}.
\]
Similarly, extending $f_e$ and $\Phi_{f_e}$ by $0$ outside $D_{r(e)}$, we obtain the function
\[
\Phi_{f_e}^{1/2}\, (\phi \circ f_e)\, \chi_{D_{r(e)}}.
\]
The following theorem, proved in \cite{DDH}, shows how a branching system yields a representation.

\begin{theorem}[\cite{DDH}, Theorem~4.1]\label{repinducedbybranchingsystems}
Let $\mathcal{G}$ be an ultragraph and let
$\{R_e,D_A,f_e\}_{e\in \mathcal{G}^1,\, A\in \mathcal{E}}$ be a
$\mathcal{G}$-branching system on a measure space $(\mathcal{X},\mu)$. Then there exists a unique
representation $\pi:C^*(\mathcal{G}) \to B(\mathcal{L}^2(\mathcal{X},\mu))$ such that, for all
$e \in \mathcal{G}^1$, $A \in \mathcal{E}$, and $\phi \in \mathcal{L}^2(\mathcal{X},\mu)$,
\[
\pi(s_e)(\phi)=\Phi_{f_e^{-1}}^{1/2}\,(\phi \circ f_e^{-1})\,\chi_{R_e},
\qquad
\pi(p_A)(\phi)=\chi_{D_A}\,\phi.
\]
\end{theorem}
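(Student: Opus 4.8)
The plan is to construct the representation $\pi$ via the universal property of $C^*(\mathcal{G})$. That is, I would define operators $\pi(s_e)$ and $\pi(p_A)$ on $\mathcal{L}^2(\mathcal{X},\mu)$ by the stated formulas and then verify that they satisfy the four defining relations of Definition~\ref{def of C^*(mathcal{G})}; universality then yields the $*$-homomorphism $\pi$, and uniqueness is automatic since $\pi$ is specified on generators. The first preliminary step is to check that each $\pi(s_e)$ is a bounded operator and that each $\pi(p_A)$ is a projection (multiplication by the characteristic function $\chi_{D_A}$ is an orthogonal projection since $\chi_{D_A}^2 = \chi_{D_A}$ and it is self-adjoint as a real multiplier). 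The Radon--Nikodym weights $\Phi_{f_e^{-1}}^{1/2}$ are exactly what make $\pi(s_e)$ an isometry onto its range after composing with the measure-preserving change of variables, so a change-of-variables computation will be the workhorse.

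The key computation is to identify the adjoint $\pi(s_e)^*$. I would show that $\pi(s_e)^*(\phi) = \Phi_{f_e}^{1/2}\,(\phi \circ f_e)\,\chi_{D_{r(e)}}$ by verifying the defining inner-product identity $\langle \pi(s_e)\phi,\psi\rangle = \langle \phi, \pi(s_e)^*\psi\rangle$; here one substitutes $u = f_e^{-1}(x)$ (equivalently $x = f_e(u)$) and uses the Radon--Nikodym relation between $\Phi_{f_e}$ and $\Phi_{f_e^{-1}}$ together with the mutual-inverse property $f_e \circ f_e^{-1} = \mathrm{id}_{R_e}$ a.e. Once the adjoint is in hand, relation~\eqref{s_e^*s_e=p_{r(e)}}, namely $\pi(s_e)^*\pi(s_e) = \pi(p_{r(e)})$, follows from composing the two formulas and using $f_e^{-1}\circ f_e = \mathrm{id}_{D_{r(e)}}$ a.e., which collapses the composite to multiplication by $\chi_{D_{r(e)}}$; the Radon--Nikodym factors multiply to the constant $1$ after the cancellation. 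Relation~(3), $\pi(s_e)\pi(s_e)^* \le \pi(p_{s(e)})$, reduces to $\pi(s_e)\pi(s_e)^* = \chi_{R_e}\cdot$, which is dominated by $\chi_{D_{s(e)}}\cdot$ precisely because condition~(3) of the branching system gives $R_e \subseteq D_{s(e)}$ a.e. Orthogonality of the ranges of the $\pi(s_e)$ is immediate from condition~\eqref{R_e cap R_f =emptyset if e neq f}.

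The Boolean relations in~\eqref{p_Ap_B=p_{A cap B}} translate directly into the pointwise identities $\chi_{D_A}\chi_{D_B} = \chi_{D_{A\cap B}}$ and $\chi_{D_A}+\chi_{D_B}-\chi_{D_{A\cap B}} = \chi_{D_{A\cup B}}$, which hold $\mu$-a.e.\ by condition~(2) of Definition~\ref{relativebs}; these are routine. Finally, the Cuntz--Krieger relation~\eqref{CK-condition} at a regular vertex $v$ amounts to $\chi_{D_v} = \sum_{s(e)=v}\chi_{R_e}$, and since the sum is finite (as $0 < |s^{-1}(v)| < \infty$) and the $R_e$ are pairwise disjoint a.e., this is exactly condition~\eqref{D_v=cup_{e in s^{-1}(v)}R_e}. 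The main obstacle, and the step requiring the most care, is the adjoint identification and the verification of $\pi(s_e)^*\pi(s_e) = \pi(p_{r(e)})$: one must handle the $0$-extensions of $f_e$, $f_e^{-1}$ and the weights off their natural domains correctly and invoke the chain rule for Radon--Nikodym derivatives, $\Phi_{f_e}\cdot(\Phi_{f_e^{-1}}\circ f_e) = 1$ a.e., to see the weights cancel. Everything else is bookkeeping with characteristic functions.
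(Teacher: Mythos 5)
Your proposal is correct and takes essentially the same route as the proof this paper relies on (the cited \cite{DDH}, Theorem~4.1): define the operators by the stated formulas, identify the adjoint $\pi(s_e)^*\phi=\Phi_{f_e}^{1/2}(\phi\circ f_e)\chi_{D_{r(e)}}$ via change of variables and the Radon--Nikodym chain rule $\Phi_{f_e}\cdot(\Phi_{f_e^{-1}}\circ f_e)=1$ a.e., deduce $\pi(s_e)^*\pi(s_e)=\pi(p_{r(e)})$ and $\pi(s_e)\pi(s_e)^*=\chi_{R_e}\cdot$, check the Boolean and Cuntz--Krieger relations pointwise a.e., and invoke the universal property, with uniqueness automatic on generators. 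No gaps to report.
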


An identical proof shows that a relative $(\mathcal{G},X)$-branching system induces a
representation of the relative ultragraph algebra $C^*(\mathcal{G},X)$ defined in the previous
section. For future reference we state the result:

\begin{proposition}\label{relativerepinducedbybranchingsystems}
Let $(\mathcal{G}, X)$ be a relative ultragraph, and let
$\{R_e,D_A,f_e\}_{e\in \mathcal{G}^1,\, A\in \mathcal{E}}$ be a relative $(\mathcal{G},X)$-branching
system on a measure space $(\mathcal{X},\mu)$. Then there exists a unique representation
$\pi:C^*(\mathcal{G},X) \to B(\mathcal{L}^2(\mathcal{X},\mu))$ such that, for all
$e \in \mathcal{G}^1$, $A \in \mathcal{E}$, and $\phi \in \mathcal{L}^2(\mathcal{X},\mu)$,
\[
\pi(s_e)(\phi)=\Phi_{f_e^{-1}}^{1/2}\,(\phi \circ f_e^{-1})\,\chi_{R_e},
\qquad
\pi(p_A)(\phi)=\chi_{D_A}\,\phi.
\]
\end{proposition}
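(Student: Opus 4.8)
The plan is to verify that the proposed operators satisfy the defining relations of $C^*(\mathcal{G},X)$ and then invoke the universal property. Following the blueprint of Theorem~\ref{repinducedbybranchingsystems}, I would first define, for each $e\in\mathcal{G}^1$ and $A\in\mathcal{E}$, the bounded operators $S_e,P_A\in B(\mathcal{L}^2(\mathcal{X},\mu))$ by
\[
S_e(\phi)=\Phi_{f_e^{-1}}^{1/2}\,(\phi\circ f_e^{-1})\,\chi_{R_e},
\qquad
P_A(\phi)=\chi_{D_A}\,\phi,
\]
and compute their adjoints; the change-of-variables formula governed by the Radon--Nikodym derivatives $\Phi_{f_e}$ and $\Phi_{f_e^{-1}}$ gives $S_e^*(\phi)=\Phi_{f_e}^{1/2}\,(\phi\circ f_e)\,\chi_{D_{r(e)}}$, exactly as in the non-relative case. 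From here the computations that $P_A$ are projections with $P_AP_B=P_{A\cap B}$ and $P_{A\cup B}=P_A+P_B-P_{A\cap B}$ (relation~\ref{p_Ap_B=p_{A cap B}}) follow from condition~(2) of Definition~\ref{relativebs}, while disjointness of the $R_e$ (condition~(1)) yields orthogonality of the ranges of the $S_e$.

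Next I would establish relations~\ref{s_e^*s_e=p_{r(e)}} and~(3). The identity $S_e^*S_e=P_{r(e)}$ is a direct Radon--Nikodym computation showing $S_e^*S_e(\phi)=\chi_{D_{r(e)}}\phi$, using $f_e^{-1}\circ f_e\overset{\mu\text{-a.e.}}{=}\mathrm{id}_{D_{r(e)}}$ and the chain rule for the derivatives. The inequality $S_eS_e^*\le P_{s(e)}$ follows from condition~(3), namely $R_e\overset{\mu\text{-a.e.}}{\subseteq}D_{s(e)}$, since $S_eS_e^*$ acts as multiplication by $\chi_{R_e}$ and $\chi_{R_e}\le\chi_{D_{s(e)}}$ a.e. Crucially, all of these arguments are \emph{identical} to those in the proof of Theorem~\ref{repinducedbybranchingsystems} in \cite{DDH}, because conditions~(1),~(2),~(3), and~(5) of Definition~\ref{relativebs} coincide verbatim with the corresponding conditions for a $\mathcal{G}$-branching system.

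The only point where the relative setting departs from \cite{DDH} is the Cuntz--Krieger relation~\ref{CK-condition}, which for $C^*(\mathcal{G},X)$ is required \emph{only} at vertices $v\in X$. For such $v$, condition~\eqref{D_v=cup_{e in s^{-1}(v)}R_e} provides $D_v\overset{\mu\text{-a.e.}}{=}\bigcup_{e\in s^{-1}(v)}R_e$; combined with the a.e.-disjointness of the $R_e$ from condition~(1) and the finiteness $0<|s^{-1}(v)|<\infty$ (since $v\in X\subseteq\mathrm{Reg}(\mathcal{G})$), this gives $\chi_{D_v}=\sum_{s(e)=v}\chi_{R_e}$ a.e., and hence $P_v=\sum_{s(e)=v}S_eS_e^*$ as operators. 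Since the defining relations of $C^*(\mathcal{G},X)$ demand relation~\ref{CK-condition} at no vertex outside $X$, there is nothing further to check. The main (indeed only) conceptual obstacle is thus simply to observe that the entire argument of \cite{DDH} goes through unchanged except that the Cuntz--Krieger verification is restricted to $X$, which is precisely the set where condition~\eqref{D_v=cup_{e in s^{-1}(v)}R_e} has been imposed. Once all relations are verified, the universal property of $C^*(\mathcal{G},X)$ (Definition~\ref{relative}) furnishes the unique $*$-homomorphism $\pi$ with the stated action on generators, and uniqueness follows since $\pi$ is determined on the generating set $\{s_e,p_A\}$.
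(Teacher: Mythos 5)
Your proposal is correct and follows exactly the route the paper takes: the paper's own justification is simply that the proof of Theorem~\ref{repinducedbybranchingsystems} from \cite{DDH} carries over verbatim, since the relative relations of $C^*(\mathcal{G},X)$ and the relative branching-system axioms are both obtained by restricting the Cuntz--Krieger condition to vertices in $X$, which is precisely the observation at the heart of your argument. Your write-up merely makes explicit the relation-by-relation verification that the paper leaves implicit.
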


\begin{remark}
By standard computations (see \cite{DDH} for details) one obtains, for $\phi \in \mathcal{L}^2(\mathcal{X},\mu)$,
\[
\pi(s_e)^*(\phi) = \Phi_{f_e}^{1/2}\, (\phi \circ f_e)\, \chi_{D_{r(e)}},
\qquad
\pi(s_e s_e^*)(\phi) = \chi_{R_e}\,\phi.
\]
We will use these identities without explicit reference. We also note that the usual set-theoretic
and (bijective) function properties hold for the sets $D_A$, $R_e$ and the maps $f_e$, $f_e^{-1}$
up to sets of measure zero; i.e., they are valid almost everywhere.
\end{remark}

Let $\mathcal{G}=(\mathcal{G}^0,\mathcal{G}^1,r,s)$ be an ultragraph and let $X\subseteq \mathrm{Reg}(\mathcal{G})$.
In what follows we show how to produce a branching system for the ultragraph algebra
$C^*(\mathcal{G}_X)$ starting from a $(\mathcal{G},X)$-relative branching system. Throughout this
section we keep the notation $Y:=\mathrm{Reg}(\mathcal{G})\setminus X$.

Let $\{R_e,D_A,f_e\}_{e \in \mathcal{G}^1,\,A \in \mathcal{E}}$ be a relative
$(\mathcal{G},X)$-branching system on $(\mathcal{X},\mu)$. Recall from
Lemma~\ref{decomposition1} that if $Z\in \mathcal{E}_X$ then $Z=A \cup (B\cap Y)'$ with
$A,B \in \mathcal{E}$. Using this, for each $Z\in\mathcal{E}_X$ define
\[
B_Z \;:=\; \bigl(D_A \setminus D_{A\cap Y}\bigr)
\;\cup\;
\bigcup_{\substack{g \in \mathcal{G}^1 \\ s(g)\in A\cap Y}} R_g
\;\cup\;
\bigcup_{v \in B\cap Y}
\left(
D_v \setminus \bigcup_{\substack{g \in \mathcal{G}^1 \\ s(g)=v}} R_g
\right).
\]

Recall also that $\mathcal{G}_X^1=\mathcal{G}^1 \cup C$, where
\[
C \;=\; \{\, e' \mid e\in \mathcal{G}^1 \text{ and } r(e)\cap Y \neq \emptyset \,\}.
\]
Define, for $e\in \mathcal{G}^1$,
\[
Q_e
\;:=\;
\Bigl(f_e\bigl(D_{r(e)}\bigr)\setminus f_e\bigl(D_{r(e)\cap Y}\bigr)\Bigr)
\;\cup\;
f_e\!\left(\,\bigcup_{\substack{g \in \mathcal{G}^1 \\ s(g)\in r(e)\cap Y}} R_g \right),
\]
and, for $e'\in C$,
\[
Q_{e'}
\;:=\;
f_e\!\left(
\;\bigcup_{v \in r(e)\cap Y}
\left(
D_v \setminus \bigcup_{\substack{g \in \mathcal{G}^1 \\ s(g)=v}} R_g
\right)
\right).
\]

Note that $B_{r(e)},\,B_{r(e')}\subseteq D_{r(e)}$. Thus we can define the maps
\[
g_e:\, B_{r(e)} \longrightarrow g_e\bigl(B_{r(e)}\bigr),
\qquad
g_{e'}:\, B_{r(e')} \longrightarrow g_{e'}\bigl(B_{r(e')}\bigr),
\]
as the restrictions of $f_e$ to $B_{r(e)}$ and $B_{r(e')}$, respectively. By construction,
\[
g_e\bigl(B_{r(e)}\bigr)=f_e\bigl(B_{r(e)}\bigr)=Q_e,
\qquad
g_{e'}\bigl(B_{r(e')}\bigr)=f_e\bigl(B_{r(e')}\bigr)=Q_{e'},
\]
and consequently the inverse maps $g_e^{-1}$ and $g_{e'}^{-1}$ are also well defined.

To show that the construction above produces a branching system, we first prove the following
lemma.

\begin{lemma}\label{intersecao vazia}
Let $(\mathcal{G},X)$ be a relative ultragraph, and set $Y=\mathrm{Reg}(\mathcal{G})\setminus X$,
with $r(e)\cap Y$ finite for all $e\in \mathcal{G}^1$. If
$Z_1=A_1 \cup (B_1 \cap Y)$ and $Z_2=A_2 \cup (B_2 \cap Y)$ are the decompositions given by
Lemma~\ref{decomposition1}, then:
\begin{itemize}
    \item $( D_{A_1} \setminus D_{A_1 \cap Y} ) \cap \left( \bigcup\limits_{g \ | \ s(g) \in A_2 \cap Y} R_g \right)  \stackrel{\mu-a.e.}{=} \emptyset.$
    \item $ \left( \bigcup\limits_{g \ | \ s(g) \in A_1 \cap Y} R_g \right) \cap ( D_{A_2} \setminus D_{A_2 \cap Y} ) \stackrel{\mu-a.e.}{=} \emptyset$.
    \item $( D_{A_1} \setminus D_{A_1 \cap Y} ) \cap \left(  \bigcup\limits_{v \in B_2 \cap Y} \left(D_v \setminus \bigcup\limits_{ e \ | \ s(e) = v} R_e \right) \right) \stackrel{\mu-a.e.}{=} \emptyset$.
    \item $\left(  \bigcup\limits_{v \in B_1 \cap Y} \left(D_v \setminus \bigcup\limits_{ e \ | \ s(e) = v} R_e \right) \right) \cap ( D_{A_2} \setminus D_{A_2 \cap Y} ) \stackrel{\mu-a.e.}{=} \emptyset$.
    \item $\left( \bigcup\limits_{g \ | \ s(g) \in A_1 \cap Y} R_g \right) \cap \left(  \bigcup\limits_{v \in B_2 \cap Y} \left(D_v \setminus \bigcup\limits_{ e \ | \ s(e) = v} R_e \right) \right) \stackrel{\mu-a.e.}{=} \emptyset$.
     \item $ \left(  \bigcup\limits_{v \in B_1 \cap Y} \left(D_v \setminus \bigcup\limits_{ e \ | \ s(e) = v} R_e \right) \right) \cap \left( \bigcup\limits_{g \ | \ s(g) \in A_2 \cap Y} R_g \right) \stackrel{\mu-a.e.}{=} \emptyset$.
\end{itemize}  
\end{lemma}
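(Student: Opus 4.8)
The plan is to read off, from the definition of $B_Z$, the three \emph{building blocks} attached to a decomposition $Z=A\cup(B\cap Y)'$, namely
\[
D_A\setminus D_{A\cap Y},\qquad
\bigcup_{\substack{g\in\mathcal{G}^1\\ s(g)\in A\cap Y}}R_g,\qquad
\bigcup_{v\in B\cap Y}\Bigl(D_v\setminus\bigcup_{\substack{e\in\mathcal{G}^1\\ s(e)=v}}R_e\Bigr),
\]
and to recognize the six claimed identities as exactly the six off-diagonal pairings of these three blocks across the two decompositions $Z_1,Z_2$. Since interchanging $Z_1\leftrightarrow Z_2$ and using commutativity of $\cap$ sends item~1 to item~2, item~3 to item~4, and item~5 to item~6, it suffices to establish items~1, 3, and~5.

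The two facts I would use repeatedly are elementary consequences of Definition~\ref{relativebs}. First, for any vertex $w$ and any $A\in\mathcal{E}$, the second condition gives $D_w\cap D_A\overset{\mu\text{-a.e.}}{=}D_{\{w\}\cap A}$, which is $\mu$-null when $w\notin A$; and if moreover $w\in Y$ and $w\in A$, then $\{w\}\subseteq A\cap Y$, so $D_w\subseteq D_{A\cap Y}$. Second, the third condition gives $R_g\overset{\mu\text{-a.e.}}{\subseteq}D_{s(g)}$, and of course $R_g\subseteq\bigcup_{s(e)=s(g)}R_e$ tautologically. The point is that every vertex index $w$ occurring in the blocks lies in $Y$ (it is either $s(g)$ with $s(g)\in A\cap Y$, or $v\in B\cap Y$), which is what lets the dichotomy ``$w\in A_1$ or $w\notin A_1$'' interact correctly with the subtracted set $D_{A_1\cap Y}$.

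I would then fix a single generator on one side and split on the relevant vertex. For item~1, given $g$ with $s(g)\in A_2\cap Y$: if $s(g)\in A_1$ then $s(g)\in A_1\cap Y$, so $R_g\subseteq D_{s(g)}\subseteq D_{A_1\cap Y}$, which is disjoint from $D_{A_1}\setminus D_{A_1\cap Y}$; if $s(g)\notin A_1$ then $R_g\cap D_{A_1}\subseteq D_{s(g)}\cap D_{A_1}\overset{\mu\text{-a.e.}}{=}\emptyset$. Item~3 is the same argument with a fixed $v\in B_2\cap Y$ and with $D_v\setminus\bigcup_{s(e)=v}R_e\subseteq D_v$ playing the role of $R_g$, splitting on whether $v\in A_1$. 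For item~5, given $g$ with $s(g)\in A_1\cap Y$ and $v\in B_2\cap Y$: if $s(g)=v$ then $R_g\subseteq\bigcup_{s(e)=v}R_e$, so $R_g$ is disjoint from $D_v\setminus\bigcup_{s(e)=v}R_e$; if $s(g)\neq v$ then $R_g\cap D_v\subseteq D_{s(g)}\cap D_v=D_{\{s(g)\}\cap\{v\}}=D_\emptyset\overset{\mu\text{-a.e.}}{=}\emptyset$.

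Finally I would upgrade these single-generator computations to the stated identities by taking unions: each pairwise intersection is $\mu$-null, and the index sets are countable (indeed $B\cap Y$ is finite by Corollary~\ref{decomposition2}, and the edge index sets are subsets of the countable set $\mathcal{G}^1$), so a countable union of $\mu$-null sets is again $\mu$-null. I do not expect a genuine obstacle here: there is no single hard step, only the bookkeeping of the two distinct subtractions ($D_{A\cap Y}$ and $\bigcup_{s(e)=v}R_e$) and the systematic ``$\mu$-a.e.''\ caveat, which is preserved under the finitely many set operations and the countable unions involved.
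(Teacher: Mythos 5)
Your proof is correct, and it is exactly the elaboration of the paper's own (one-line) argument: the paper simply asserts the lemma "follows from the properties of the sets $R_g$ and $D_A$," meaning precisely the ingredients you use — the Boolean relations $D_{A}\cap D_{B}\overset{\mu\text{-a.e.}}{=}D_{A\cap B}$, the containment $R_g\overset{\mu\text{-a.e.}}{\subseteq}D_{s(g)}$, and countable unions of null sets. The case-splitting on whether the relevant vertex lies in $A_1$ (or equals $v$) and the symmetry reduction to items 1, 3, 5 are the natural bookkeeping the authors left to the reader, and you have carried it out correctly.
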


\begin{proof}
    It follows from the properties of the sets $R_g$ ($g \in \G^1$) and $D_{A}$ ($A \in \mathcal{E}$).
    
\end{proof}

\begin{proposition}
Let $(\mathcal{G},X)$ be a relative ultragraph and set $Y=\mathrm{Reg}(\mathcal{G})\setminus X$,
assuming that $r(e)\cap Y$ is finite for all $e\in \mathcal{G}^1$.  
Then the collection $\{Q_f, B_Z, g_f\}_{f\in \mathcal{G}_X^1,\; Z\in \mathcal{E}_X}$ constructed above 
forms a $\mathcal{G}_X$-branching system on $(\mathcal{X},\mu)$.
\end{proposition}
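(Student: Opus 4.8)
The plan is to verify, in turn, the five defining conditions of a $\mathcal{G}_X$-branching system (Definition~\ref{relativebs} with $X=\mathrm{Reg}(\mathcal{G}_X)$) for the family $\{Q_f,B_Z,g_f\}$. First I would record two preliminary facts. The assignment $Z\mapsto B_Z$ is well defined, because by Corollary~\ref{decomposition2} the sets $A$ and $B\cap Y$ in the decomposition $Z=A\cup(B\cap Y)'$ are uniquely determined by $Z$, and the three constituents of $B_Z$ depend only on $A$ and on the finite set $B\cap Y$. Moreover $\mathrm{Reg}(\mathcal{G}_X)=\mathrm{Reg}(\mathcal{G})$, since the adjoined primed vertices are sinks and adjoining the edges $e'$ keeps exactly the original regular vertices regular; hence the Cuntz--Krieger condition must be checked at every $v\in\mathrm{Reg}(\mathcal{G})$, including those in $Y$.

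For the Boolean condition I would compute $B_{Z_1}\cap B_{Z_2}$ by distributing across the three constituents of each $B_{Z_i}$. The six mixed products vanish $\mu$-a.e.\ by Lemma~\ref{intersecao vazia}, while the three like-type products collapse using $D_A\cap D_{A'}=D_{A\cap A'}$, the $\mu$-a.e.\ disjointness of the $R_g$, and the $\mu$-a.e.\ disjointness of the $D_v$ for distinct vertices ($D_v\cap D_w=D_\emptyset$); a short check shows the outcome is exactly $B_{Z_1\cap Z_2}$, and $B_\emptyset=\emptyset$ and the union relation follow the same way. This is the spatial shadow of the algebraic identities of Lemma~\ref{contas1}: under the representation $\pi$ of Proposition~\ref{relativerepinducedbybranchingsystems}, $\pi(p_A)$, $\pi(s_es_e^*)$ and $\pi(q_v)$ are multiplication by $\chi_{D_A}$, $\chi_{R_e}$ and $\chi_{D_v\setminus\bigcup_{s(g)=v}R_g}$, so $\pi\circ\psi(P_Z)$ (with $\psi$ from Proposition~\ref{isomorphismrelative}) is multiplication by $\chi_{B_Z}$; since $\pi\circ\psi$ is a representation of $C^*(\mathcal{G}_X)$, the Boolean relations come for free, and I would use whichever phrasing reads more cleanly. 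The disjointness of ranges and the range-in-domain condition then follow quickly: since $f_e(D_{r(e)})=R_e$ a.e., both $Q_e$ and $Q_{e'}$ lie in $R_e$, so disjointness of distinct $R_e$ settles all pairs $Q_f\cap Q_g$ lying over different edges, while the one remaining pair $Q_e\cap Q_{e'}$ vanishes because $r_X(e)\subseteq\mathcal{G}^0$ and $r_X(e')\subseteq Y'$ are disjoint, whence $B_{r(e)}\cap B_{r(e')}=B_\emptyset$ and injectivity of $f_e$ forces $Q_e\cap Q_{e'}=\emptyset$ a.e.; and $Q_f\subseteq R_e\subseteq D_{s(e)}$ together with a case split on $s(e)\in X$ versus $s(e)\in Y$ gives $Q_f\subseteq B_{s_X(f)}$.

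The heart of the argument, and the step I expect to be the main obstacle, is the Cuntz--Krieger condition $B_v\overset{\mu\text{-a.e.}}{=}\bigcup_{f\in s_X^{-1}(v)}Q_f$ for $v\in\mathrm{Reg}(\mathcal{G})$. Writing $s_X^{-1}(v)=\{e:s(e)=v\}\cup\{e':s(e)=v,\ r(e)\cap Y\neq\emptyset\}$, I would first prove the pointwise identity $Q_e\cup Q_{e'}=R_e$ whenever $r(e)\cap Y\neq\emptyset$ (and $Q_e=R_e$ otherwise). This rests on the decomposition $f_e(D_{r(e)\cap Y})=f_e\!\big(\bigcup_{s(g)\in r(e)\cap Y}R_g\big)\cup Q_{e'}$, obtained by splitting each $D_w$ ($w\in r(e)\cap Y$) into $\bigcup_{s(h)=w}R_h$ and its complement inside $D_w$; feeding this back into the definition of $Q_e$ yields $Q_e\cup Q_{e'}=R_e$. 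Summing over $e$ with $s(e)=v$ then gives $\bigcup_{f\in s_X^{-1}(v)}Q_f=\bigcup_{s(e)=v}R_e$, and it remains to match this with $B_v$: if $v\in X$ then $B_v=D_v=\bigcup_{s(e)=v}R_e$ by condition~(4) of Definition~\ref{relativebs}, while if $v\in Y$ then $B_v=\bigcup_{s(g)=v}R_g$ holds by the very construction of $B_v$. The conceptual point is that the primed edge $e'$ carries exactly the ``escape'' mass $f_e\big(D_v\setminus\bigcup_{s(h)=v}R_h\big)$ onto the primed vertices, so the original vertex becomes regular in $\mathcal{G}_X$ even when $D_v$ strictly exceeds $\bigcup_{s(e)=v}R_e$.

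Finally, the measurable-isomorphism condition is essentially automatic: each $g_f$ is by definition the restriction of the a.e.\ bijection $f_e$ to the measurable set $B_{r_X(f)}\subseteq D_{r(e)}$, so $g_f$ and $g_f^{-1}$ are measurable mutual inverses a.e., and the absolute continuity $\mu\circ g_f\ll\mu$, $\mu\circ g_f^{-1}\ll\mu$ is inherited from $f_e$ by restriction, with Radon--Nikodym derivatives the restrictions of $\Phi_{f_e}$ and $\Phi_{f_e^{-1}}$. Thus the genuine work is concentrated in the bookkeeping of the Boolean step (six vanishing mixed products plus three collapsing like products) and in the identity $Q_e\cup Q_{e'}=R_e$ driving the Cuntz--Krieger step; everything else is forced by the structure of the relative branching system together with Lemma~\ref{intersecao vazia} and Corollary~\ref{decomposition2}.
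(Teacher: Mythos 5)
Your proposal is correct and follows essentially the same route as the paper's proof: verification of the five axioms, with the Cuntz--Krieger step hinging on the identity $Q_e\cup Q_{e'}\overset{\mu\text{-a.e.}}{=}f_e(D_{r(e)})$ (obtained, exactly as in the paper, by splitting each $D_w$ with $w\in r(e)\cap Y$ into $\bigcup_{s(h)=w}R_h$ and its complement) followed by the case split $v\in X$ versus $v\in Y$ via condition (4) of Definition~\ref{relativebs}; your optional representation-theoretic shortcut for the Boolean relations via $\eta\circ\psi$ is also valid and non-circular, although showing that $\eta\circ\psi(P_Z)$ is multiplication by $\chi_{B_Z}$ needs the same disjointness bookkeeping, so it is not really shorter. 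The one point to repair is the disjointness of $Q_e$ and $Q_{e'}$: injectivity of $f_e$ alone does not suffice, since an injective measurable map can carry a $\mu$-null set onto a set of positive measure, so after observing that $B_{r(e)}\cap B_{r(e')}\overset{\mu\text{-a.e.}}{=}\emptyset$ you must also invoke $\mu\circ f_e\ll\mu$ (available in your own setup, and cited explicitly by the paper at exactly this step) to conclude that the images have $\mu$-null intersection.
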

  
\begin{proof}
\begin{enumerate} 
 We will check the conditions of Definition \ref{relativebs}.
  
\item\label{Q_e cap R_Q =emptyset if e neq f} $Q_e\cap Q_f \stackrel{\mu-a.e.}{=} \emptyset$ if $e \neq f \in \mathcal{G}^1_X$.

First, note that $Q_e = f_e(B_{r(e)}) \stackrel{\mu-a.e}{\subseteq} R_e$ for all $e \in \G^1$ and $Q_{e'} = f_e(B_{r(e')})  \stackrel{\mu-a.e}{\subseteq} R_e$  whenever $e'$ is defined. As $R_e \cap R_f \stackrel{\mu-a.e}{=} \emptyset$ the claim is immediate in all cases except for the case involving $e$ and $e'$. In this case we observe that, by Lemma~\ref{intersecao vazia},
$$\left( (D_{r(e)} \setminus D_{r(e) \cap Y}) \cup \left( \bigcup\limits_{g \ | \ s(g) \in r(e) \cap Y} R_g \right) \right) \cap \left( \bigcup\limits_{v \in r(e) \cap Y} (D_v \setminus \bigcup\limits_{g \ | \ s(g) = v} R_g ) \right) \stackrel{\mu-a.e}{=} \emptyset.$$ 

Recall from the definition of $Q_e$ and $Q_{e'}$ that $Q_e$ is the image by $f_e$ of the left side of the above intersection, while $Q_{e'}$ is the image of $f_e$ of the right side. Using that $\mu \circ f_e \ll \mu$ we conclude the desired claim.

\item   It is not hard to see that $B_\emptyset=\emptyset$ and $B_{Z_1} \cup B_{Z_2} \stackrel{\mu-a.e.}{=} B_{Z_1 \cup Z_2}$  for all $Z_1, Z_2 \in \mathcal{E}_X$. The equality $ B_{Z_1} \cap B_{Z_2}\stackrel{\mu-a.e.}{=} B_{Z_1 \cap Z_2}$ follows from the previous lemma.

\item We leave the equality $Q_e\stackrel{\mu-a.e.}{\subseteq}B_{s(e)}$ for all $e\in \mathcal{G}^1_X$ to the reader.

\item  To see why $B_v\stackrel{\mu-a.e.}{=} \bigcup\limits_{e \in \G_X^1 \ | \ s_X(e) = v}Q_e$ for all $v \in Reg(\G_X)$ we will prove first that $Q_e \cup Q_{e'} \stackrel{\mu-a.e}{=} f_e({D_{r(e)}})$ whenever $e'$ is defined, i.e, $r(e) \cap Y \neq \emptyset$ (Note that if $r(e) \cap Y = \emptyset$ then obviously $Q_e = f_e(D_{r(e)})$).

As $r(e) \cap Y = \{v_1, \ldots, v_N\}$ for some $N \in \mathbb{N}$ then $D_{r(e) \cap Y} \stackrel{\mu-a.e}{=} D_{v_1} \cup \ldots \cup D_{v_N}$ and consequently:

$$D_{r(e) \cap Y} \stackrel{\mu-a.e}{=}
\left( \bigcup\limits_{g \ | \ s(g) \in r(e) \cap Y} R_g \right) \cup \left( \bigcup\limits_{v \in r(e) \cap Y} \left(  D_v \setminus \bigcup\limits_{e \ | \ s(e) = v} R_e \right) \right)$$
because if $x \in D_{r(e) \cap Y}$ then $x \in D_{v_i}$ for some $i = 1, \ldots, N$ except for a set of null measure. Therefore
$$f_e(D_{r(e)}) = f_e(D_{r(e)} \setminus D_{r(e) \cap Y}) \cup f_e(D_{r(e) \cap Y}) = Q_e \cup Q_{e'}.$$

Now, using the previous work, we can deduce the following crucial equality:
\begin{align*}
    \bigcup\limits_{e \in \G^1 \ | \ s(e) = v}R_e & \stackrel{\mu-a.e.}{=}  \left( \bigcup\limits_{e \in \G^1, r(e) \cap Y \neq \emptyset \ | \ s(e) = v}R_e  \right) \cup \left( \bigcup\limits_{e \in \G^1, r(e) \cap Y = \emptyset \ | \ s(e) = v}R_e  \right)\\
    & \stackrel{\mu-a.e.}{=}  \left( \bigcup\limits_{e \in \G^1, r(e) \cap Y \neq \emptyset \ | \ s(e) = v} Q_e \cup Q_{e'} \right) \cup \left( \bigcup\limits_{e \in \G^1, r(e) \cap Y = \emptyset \ | \ s(e) = v}Q_e \right)\\
     & \stackrel{\mu-a.e.}{=}  \left( \bigcup\limits_{e \in \G^1_X  \ | \ s_X(e) = v} Q_e \right). 
\end{align*}
To finish this item is enough to note that for all $v \in Reg(\G_X) = Reg(\G)$ we have
$$B_v  \stackrel{\mu-a.e.}{=} \begin{cases}
    D_v \textrm{ if } v\notin Y. \\
    \bigcup\limits_{e \in \G^1 \ | \ s(e) = v} R_e \textrm{ if } v \in Y.\\
\end{cases}$$
By item $(4)$ from the definition of relative branching system (since $v \in X$) we conclude that $$B_v \stackrel{\mu-a.e}{=}  \bigcup\limits_{e  \in \G^1 \ | \ s(e) = v} R_e$$
for all $v \in Reg(\G)$. Therefore, by the computation of the last paragraph we are done.

\item For each $e \in \mathcal{G}^1_X$ we note that the functions $g_e$ was defined as $\mu-a.e$ bijections . Moreover, each $g_e$ is a restriction of $f_e$ and therefore item $(5)$ of the definition of branching system follows.
\end{enumerate}
\end{proof}

We now relate the representations arising from a relative branching system and from its
lift to $\mathcal{G}_X$.
\begin{theorem}\label{DiagramadeBS}
Let $(\mathcal{G}, X)$ be a relative ultragraph and set $Y=\mathrm{Reg}(\mathcal{G})\setminus X$,
with $r(e)\cap Y$ finite for all $e \in \mathcal{G}^1$. Let
$\{R_e,D_A,f_e\}_{e\in \mathcal{G}^1,\, A\in \mathcal{E}}$ be a $(\mathcal{G}, X)$-relative
branching system on a measure space $(\mathcal{X},\mu)$, and consider the
$\mathcal{G}_X$-branching system $\{Q_f, B_Z, g_f\}_{f \in \mathcal{G}_X^1,\, Z \in \mathcal{E}_X}$
on $(\mathcal{X},\mu)$ constructed above. Let $\eta$ be the representation induced by
$\{R_e,D_A,f_e\}$ via Proposition ~\ref{relativerepinducedbybranchingsystems}, and let $\pi$ be the
representation induced by $\{Q_f, B_Z, g_f\}$ via Theorem ~\ref{repinducedbybranchingsystems}.
Then $\pi=\eta \circ \psi$, where $\psi$ is the isomorphism given in
Theorem~\ref{teo iso algebras ultragrafos e relativas}. 
\end{theorem}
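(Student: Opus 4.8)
The plan is to check that the two $*$-homomorphisms $\pi$ and $\eta\circ\psi$, both defined on $C^*(\mathcal{G}_X)$, agree on the canonical generators $\{P_Z:Z\in\mathcal{E}_X\}\cup\{S_f:f\in\mathcal{G}_X^1\}$; since these generate $C^*(\mathcal{G}_X)$ and both maps are $*$-homomorphisms, this forces $\pi=\eta\circ\psi$. Throughout I would work with the multiplication-operator formulas for $\eta$ supplied by Proposition~\ref{relativerepinducedbybranchingsystems} and the Remark following it: $\eta(p_A)$ is multiplication by $\chi_{D_A}$, $\eta(s_e s_e^*)$ is multiplication by $\chi_{R_e}$, and hence $\eta(q_w)$ is multiplication by $\chi_{D_w\setminus\bigcup_{s(e)=w}R_e}$ (using that the $R_e$ with $s(e)=w$ are pairwise disjoint and contained in $D_w$, by Definition~\ref{relativebs}).

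For the projections, write $Z=A\cup(B\cap Y)'$ as in Corollary~\ref{decomposition2} and apply $\eta$ to the explicit formula for $\psi(P_Z)$ from Proposition~\ref{isomorphismrelative}. The result is multiplication by $\chi_{D_A\setminus D_{A\cap Y}}+\sum_{s(e)\in A\cap Y}\chi_{R_e}+\sum_{w\in B\cap Y}\chi_{D_w\setminus\bigcup_{s(g)=w}R_g}$. The three families of sets occurring here are precisely the three pieces whose union defines $B_Z$, and Lemma~\ref{intersecao vazia}, applied with $Z_1=Z_2=Z$, shows they are pairwise disjoint $\mu$-a.e. Thus the sum of characteristic functions collapses to $\chi_{B_Z}$, giving $\eta(\psi(P_Z))=\pi(P_Z)$.

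For the edges, let $e$ denote the underlying edge of $f$ (so $e=f$ when $f\in\mathcal{G}^1$, and $f=e'$ when $f\in C$); in either case $\psi(S_f)=s_e\,t_f$, where $t_f$ is the projection appearing in Proposition~\ref{isomorphismrelative}. By the projection computation above, $\eta(t_f)$ is multiplication by $\chi_{B_{r(f)}}$: when $f\in\mathcal{G}^1$ this is the case $B\cap Y=\emptyset$, and when $f=e'$ one uses in addition that the sets $D_v$ for $v\in r(e)\cap Y$ are pairwise disjoint (since $D_v\cap D_w=D_{\emptyset}=\emptyset$ for $v\ne w$) together with $R_g\subseteq D_{s(g)}$. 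Hence $\eta(\psi(S_f))(\phi)=\eta(s_e)\big(\chi_{B_{r(f)}}\phi\big)=\Phi_{f_e^{-1}}^{1/2}\,\big((\chi_{B_{r(f)}}\phi)\circ f_e^{-1}\big)\,\chi_{R_e}$. Since $B_{r(f)}\subseteq D_{r(e)}$ and $f_e$ maps $D_{r(e)}$ bijectively onto $R_e$ with $Q_f=f_e(B_{r(f)})$, one has $\chi_{B_{r(f)}}\circ f_e^{-1}=\chi_{Q_f}$ on $R_e$; recalling that $g_f=f_e|_{B_{r(f)}}$ (so $g_f^{-1}=f_e^{-1}|_{Q_f}$), this expression matches $\pi(S_f)(\phi)=\Phi_{g_f^{-1}}^{1/2}(\phi\circ g_f^{-1})\chi_{Q_f}$ once the Radon--Nikodym derivatives are identified.

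I expect the routine part to be the multiplication-operator algebra and the invocation of Lemma~\ref{intersecao vazia}. The step demanding the most care is this last identification in the edge case: one must simultaneously see that precomposition with $f_e^{-1}$ turns the cut-down factor $\chi_{B_{r(f)}}$ into exactly $\chi_{Q_f}$, and that $\Phi_{g_f^{-1}}$ agrees $\mu$-a.e. on $Q_f$ with $\Phi_{f_e^{-1}}$. The latter is the locality of the Radon--Nikodym derivative under restriction of $f_e$ to the measurable subset $B_{r(f)}$, which follows from uniqueness of the derivative; this is where the almost-everywhere bookkeeping is most delicate, though it is ultimately straightforward.
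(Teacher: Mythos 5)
Your proposal is correct and follows essentially the same route as the paper: verify equality on the generators $P_Z$ and $S_f$, using the decomposition $Z=A\cup(B\cap Y)'$, the explicit formula for $\psi$, the multiplication-operator form of $\eta$, and the disjointness lemma to collapse the sum of characteristic functions into $\chi_{B_Z}$. In fact your treatment is slightly more complete than the paper's, which only writes out the edge computation for $f=e\in\mathcal{G}^1$ (omitting the $f=e'\in C$ case) and leaves implicit the identification of the Radon--Nikodym derivatives $\Phi_{g_f^{-1}}$ and $\Phi_{f_e^{-1}}$ on $Q_f$ that you rightly flag as the delicate step.
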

\begin{proof}
    Let $Z \in \mathcal{E}_X$ and write $Z = A \cup (B \cap Y)'$ via Lemma \ref{decomposition1}. Note that for all $F \in L^2(\mathcal{X}, \mu)$ we have
    \begin{align*}
        & (\eta \circ \psi)(P_Z)(F)  = \eta \left( p_A - p_{A \cap Y} + \sum\limits_{g \ | \ s(g) \in A \cap Y}s_g s_g^* + \sum\limits_{v \in B \cap Y}q_v \right)(F) \\
        & = \chi_{D_A} \cdot F - \chi_{D_{A \cap Y}} \cdot F + \sum\limits_{g \ | \ s(g) \in A \cap Y} \chi_{R_g} \cdot F +  \sum\limits_{v \in B \cap Y} \left(  \chi_{D_v} \cdot F - \sum\limits_{g \ | \ s(g) = v } \chi_{R_g} \cdot F \right)     \\      
        & = \left(  \chi_{D_{A}} - \chi_{D_{A \cap Y}} + \chi_{  \left(  \bigcup\limits_{g \ | \ s(g) \in A \cap Y}R_g  \right) }  +  \chi_{  \left(  \bigcup\limits_{\ v \in B \cap Y } ( D_v \setminus \cup_{ \{g \ | \ s(g) = v \} } R_g )  \right) } \right) \cdot F = \pi(P_Z)(F) \\
    \end{align*}
    where we used that $\eta(s_g s_g^*)(F) = \chi_{R_g} \cdot F$. Also the fact that $R_g \cap R_h  \stackrel{\mu-a.e.}{=}\emptyset$ if $g \neq h$ and $D_v \cap D_w  \stackrel{\mu-a.e.}{=}\emptyset$ if $v \neq w$ is crucial to write the sums of characteristic functions as the characteristic function of a union. The last equality follows from the definition of $B_Z$, because $\pi(P_Z)(F) = \chi_{B_Z} \cdot F$. 

    We now check that $\pi(S_f) = (\eta \circ \psi)(S_f)$ for $f \in \mathcal{G}_X^1$.. First, let  $f = e \in \mathcal{G}^1$ and fix an $F \in L^2(\mathcal{X}, \mu)$. Then

   \begin{align*}
   & (\eta \circ \psi)(S_e)(F)  = \eta\left(s_e - s_ep_{r(e) \cap Y} + \sum\limits_{g \ | \ s(g) \in r(e) \cap Y}s_e s_g s_g^* \right)(F) \\
   & = \Phi_{f_e^{-1}}^{\frac{1}{2}} \cdot (F \circ f_e^{-1} ) - \Phi_{f_e^{-1}}^{\frac{1}{2}} \cdot \left( \chi_{f_e(D_{r(e) \cap Y})} \cdot F \circ f_e^{-1} \right)  + \Phi_{f_e^{-1}}^{\frac{1}{2}} \cdot \left( \chi_{ 
  f_e \left( \bigcup\limits_{ g \ | \ s(g) \in r(e) \cap Y} R_g  \right)  }      \cdot  F \circ f_e^{-1} \right) \\
  & = \Phi_{f_e^{-1}}^{\frac{1}{2}} \cdot (F \circ f_e^{-1} )  \cdot \left(  \chi_{R_e} - \chi_{f_e(D_{r(e) \cap Y})} +   \chi_{ 
  f_e \left( \bigcup\limits_{ g \ | \ s(g) \in r(e) \cap Y} R_g  \right)  }   \right) \\
  & = \Phi_{f_e^{-1}}^{\frac{1}{2}} \cdot (F \circ f_e^{-1} )  \cdot \left(  \chi_{f_e(D_{r(e)})} - \chi_{f_e(D_{r(e) \cap Y})} +   \chi_{ 
  f_e \left( \bigcup\limits_{ g \ | \ s(g) \in r(e) \cap Y} R_g  \right)  }   \right) \\
  & = \Phi_{f_e^{-1}}^{\frac{1}{2}} \cdot (F \circ f_e^{-1} )  \cdot  \chi_{Q_e} = \pi(S_e)(F).\\
\end{align*}

Hence, for $f=e \in \mathcal{G}^1$ we have $\pi(S_f)=(\eta \circ \psi)(S_f)$.

Therefore, $\pi=\eta \circ \psi$ on the generators of $C^*(\mathcal{G}_X)$, which proves the theorem.
\end{proof}

In the previous section we derived results for the relative ultragraph algebra by using the
isomorphism between $C^*(\mathcal{G},X)$ and $C^*(\mathcal{G}_X)$ together with known results
for ultragraph $C^*$-algebras. By the preceding theorem, statements involving relative branching
systems can likewise be deduced from the corresponding results for branching systems. We present a few of these results below.

\begin{proposition}\label{injectivitybscondL}
Let $(\mathcal{G}, X)$ be an ultragraph and set $Y=\mathrm{Reg}(\mathcal{G})\setminus X$.
Suppose that $r(e) \cap Y$ is finite for all $e \in \mathcal{G}^1$. Let
$\{R_e,D_A,f_e\}_{e\in \mathcal{G}^1,\; A\in \mathcal{E}}$ be a $(\mathcal{G}, X)$-relative
branching system on a measure space $(\mathcal{X},\mu)$, and let
\[
\eta: C^*(\mathcal{G},X) \longrightarrow B\!\bigl(L^2(\mathcal{X},\mu)\bigr)
\]
be the representation induced via the branching system. Suppose that:
\begin{enumerate}
    \item $\mu(D_v) > 0$ for all $v \notin Y$;
    \item $\mu(R_e) > 0$ for all $e \in \mathcal{G}^1$ with $s(e) \in Y$;
    \item $\mu\!\left( D_v \setminus \bigcup_{\substack{e \in \mathcal{G}^1\\ s(e)=v}} R_e \right) > 0$
    for all $v \in Y$.
\end{enumerate}
If $\mathcal{G}$ satisfies the Relative Condition~(L), then $\eta$ is injective.
\end{proposition}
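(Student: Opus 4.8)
The plan is to reduce the statement to Theorem~\ref{Relative condition (L) implies injective} by checking that the representation $\eta$ satisfies its three numbered hypotheses. The key observation is that each of those hypotheses, which assert the nonvanishing of $\eta$ on specific generators, translates directly into a positivity statement about the measures $\mu(D_v)$, $\mu(R_e)$, and $\mu\bigl(D_v\setminus\bigcup_{s(e)=v}R_e\bigr)$ — precisely the three conditions assumed here. So the proof is essentially a dictionary translation between ``operator is nonzero'' and ``set has positive measure,'' followed by one application of the earlier theorem.

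First I would record the elementary criterion that, for a measurable $S\subseteq\mathcal{X}$, the multiplication operator $\phi\mapsto \chi_S\,\phi$ on $L^2(\mathcal{X},\mu)$ is the zero operator if and only if $\mu(S)=0$. Using the formulas of Proposition~\ref{relativerepinducedbybranchingsystems} together with the Remark immediately following it, $\eta(p_v)$ is multiplication by $\chi_{D_v}$ and $\eta(s_e s_e^*)$ is multiplication by $\chi_{R_e}$. Hence hypothesis~(1) of the proposition ($\mu(D_v)>0$ for $v\notin Y$) yields $\eta(p_v)\neq 0$ for all $v\notin Y$, and hypothesis~(2) ($\mu(R_e)>0$ for $s(e)\in Y$) yields $\eta(s_e s_e^*)\neq 0$ for each such $e$. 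These are exactly conditions~(1) and~(2) of Theorem~\ref{Relative condition (L) implies injective}.

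The one computation requiring care is condition~(3), the nonvanishing of $\eta(q_v)$ for $v\in Y$. Here $q_v=p_v-\sum_{s(e)=v}s_e s_e^*$, and since $v\in Y\subseteq\mathrm{Reg}(\mathcal{G})$ the index set $s^{-1}(v)$ is finite, so this is a finite sum. Applying $\eta$ and using that the sets $R_e$ are pairwise disjoint $\mu$-a.e.\ and satisfy $R_e\subseteq D_{s(e)}=D_v$, I would collapse $\eta(q_v)$ into multiplication by $\chi_{D_v}-\sum_{s(e)=v}\chi_{R_e}=\chi_{D_v\setminus\bigcup_{s(e)=v}R_e}$. By the zero-operator criterion this is nonzero precisely when $\mu\bigl(D_v\setminus\bigcup_{s(e)=v}R_e\bigr)>0$, which is hypothesis~(3). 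Thus all three hypotheses of Theorem~\ref{Relative condition (L) implies injective} are verified.

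Finally, since $\mathcal{G}$ satisfies the Relative Condition~(L) by assumption and $r(e)\cap Y$ is finite for all $e$, Theorem~\ref{Relative condition (L) implies injective} applies with $\Phi=\eta$ and $\mathcal{B}=B(L^2(\mathcal{X},\mu))$, giving injectivity of $\eta$. The main — indeed only — obstacle is the bookkeeping in the computation of $\eta(q_v)$: one must correctly invoke the a.e.\ disjointness of the $R_e$ and the containment $R_e\subseteq D_v$ to rewrite the finite difference of characteristic functions as a single indicator. Everything else is a direct translation between operator nonvanishing and measure positivity, so I expect no genuine difficulty beyond that point.
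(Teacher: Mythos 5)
Your proposal is correct and follows essentially the same route as the paper's proof: translate the three measure-positivity hypotheses into the nonvanishing of $\eta(p_v)$, $\eta(s_e s_e^*)$, and $\eta(q_v)$ via the multiplication-operator formulas (including the collapse of $\eta(q_v)$ into multiplication by $\chi_{D_v\setminus\bigcup_{s(e)=v}R_e}$ using disjointness of the $R_e$ and $R_e\subseteq D_v$), then invoke Theorem~\ref{Relative condition (L) implies injective}. No gaps; this matches the paper's argument step for step.
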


\begin{proof}
Recall that the representation $\eta$ induced by the relative branching system satisfies
$\eta(p_A)(F) = \chi_{D_A}\,F$ and $\eta(s_e s_e^*)(F) = \chi_{R_e}\,F$
for all $F \in L^2(\mathcal{X},\mu)$. By (1) and (2) we see that
$\eta(p_A) \neq 0$ for all nonempty $A \in \mathcal{E}$ with $A \cap Y = \emptyset$, and
$\eta(s_e s_e^*) \neq 0$ for all $e \in \mathcal{G}^1$ with $s(e) \in Y$.
Moreover, for each $v \in Y$,
\begin{align*}
\eta(q_v)(F)
&= \eta\!\left(p_v - \sum_{\substack{e \in \mathcal{G}^1\\ s(e)=v}} s_e s_e^*\right)(F)
  = \chi_{D_v}\,F - \sum_{\substack{e \in \mathcal{G}^1\\ s(e)=v}} \chi_{R_e}\,F \\
&= \biggl(\chi_{D_v} - \chi_{\displaystyle\bigcup_{\substack{e \in \mathcal{G}^1\\ s(e)=v}} R_e}\biggr) F
  = \chi_{\,D_v \setminus \displaystyle\bigcup_{\substack{e \in \mathcal{G}^1\\ s(e)=v}} R_e}\,F,
\end{align*}
and thus, by (3), $\eta(q_v) \neq 0$ for all $v \in Y$. The result now follows from
Theorem~\ref{Relative condition (L) implies injective}.
\end{proof}





Without assuming the Relative Condition~(L), we obtain the following result.

\begin{proposition}\label{injectivitybs}
Let $(\mathcal{G}, X)$ be a relative ultragraph and set $Y=\mathrm{Reg}(\mathcal{G})\setminus X$,
with $r(e)\cap Y$ finite for all $e \in \mathcal{G}^1$. Let
$\{R_e,D_A,f_e\}_{e\in \mathcal{G}^1,\, A\in \mathcal{E}}$ be a $(\mathcal{G}, X)$-relative
branching system on a measure space $(\mathcal{X},\mu)$, and let $\eta$ be the representation
induced by $\{R_e,D_A,f_e\}$ via Proposition~\ref{relativerepinducedbybranchingsystems}. Suppose:
\begin{enumerate}
    \item $\mu(D_v) > 0$ for all $v \notin Y$;
    \item $\mu(R_e) > 0$ for all $e \in \mathcal{G}^1$ with $s(e) \in Y$;
    \item $\mu\!\left( D_v \setminus \bigcup_{\substack{e \in \mathcal{G}^1\\ s(e)=v}} R_e \right) > 0$
          for all $v \in Y$;
    \item for any simple cycle $\alpha=(\alpha_i)_{i=1}^N$ without exits in $\mathcal{G}$ such that
          $r(\alpha_i) \notin Y$ for all $i=1,\ldots,N$, and for any finite subset
          $\mathcal{F}\subseteq\mathbb{N}$, there exists a measurable set
          $E \subseteq D_{s(\alpha)}$ with $\mu(E)\neq 0$ such that
          $\mu\!\bigl(f_{n\alpha}(E)\cap E\bigr)=0$ for all $n\in \mathcal{F}$.
\end{enumerate}
If {\rm(1)–(4)} hold, then $\eta$ is injective. Moreover, if $\eta$ is injective, then
{\rm(1)–(3)} hold.
\end{proposition}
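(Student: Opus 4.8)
The plan is to reduce the statement to the already-known uniqueness theorem for ultragraph $C^*$-algebras. By Theorem~\ref{DiagramadeBS} we have $\pi=\eta\circ\psi$, where $\pi$ is the representation of $C^*(\mathcal{G}_X)$ attached to the lifted $\mathcal{G}_X$-branching system $\{Q_f,B_Z,g_f\}$ and $\psi$ is the isomorphism of Theorem~\ref{teo iso algebras ultragrafos e relativas}. Since $\psi$ is an isomorphism, $\eta$ is injective if and only if $\pi$ is injective, so it suffices to verify the hypotheses of \cite[Theorem~7.4]{DDH} for $\pi$: that $\pi(P_Z)\neq 0$ for every nonempty $Z\in\mathcal{E}_X$, and that for every cycle without exit in $\mathcal{G}_X$ the spectrum of $\pi(S_\alpha)$ contains the unit circle.

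For the forward implication I would first handle the non-degeneracy. Writing $Z=A\cup(B\cap Y)'$ and using $\pi(P_Z)(F)=\chi_{B_Z}\,F$ from Theorem~\ref{DiagramadeBS}, the task is to show $\mu(B_Z)>0$ whenever $Z\neq\emptyset$. This splits into the same three cases as in the proof of Theorem~\ref{Relative condition (L) implies injective}: if $A\neq\emptyset$ and $A\cap Y=\emptyset$, then $D_A\setminus D_{A\cap Y}=D_A$, and picking $v\in A$ gives $\mu(D_v)>0$ by (1) with $D_v\subseteq D_A\subseteq B_Z$; if $A\cap Y\neq\emptyset$, pick $v\in A\cap Y$, choose an edge $e$ with $s(e)=v$ (possible since $Y\subseteq\mathrm{Reg}(\mathcal{G})$), and use (2) with $R_e\subseteq B_Z$; and if $A=\emptyset$, then $B\cap Y\neq\emptyset$, so pick $v\in B\cap Y$ and use (3) with $D_v\setminus\bigcup_{s(e)=v}R_e\subseteq B_Z$. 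In each case $\mu(B_Z)>0$, hence $\pi(P_Z)\neq 0$.

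For the cycle condition, by Lemma~\ref{lemma relative condition L} a cycle $\alpha$ has no exit in $\mathcal{G}_X$ precisely when it has no exit in $\mathcal{G}$ and $r(\alpha_i)\notin Y$ for every $i$. For such an $\alpha$ each $\alpha_i$ satisfies $r(\alpha_i)\cap Y=\emptyset$, so $\psi(S_{\alpha_i})=s_{\alpha_i}$ and hence $\pi(S_\alpha)=\eta(s_\alpha)$; moreover the lifted maps $g_{\alpha_i}$ coincide with $f_{\alpha_i}$ along the cycle, so $\pi(S_\alpha)$ is exactly the weighted composition operator determined by $f_\alpha$, whose powers are governed by the maps $f_{n\alpha}$. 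Condition (4) — the existence, for every finite $\mathcal{F}$, of $E\subseteq D_{s(\alpha)}$ with $\mu(E)\neq 0$ and $\mu(f_{n\alpha}(E)\cap E)=0$ for all $n\in\mathcal{F}$ — is precisely the dynamical hypothesis forcing the spectrum of this operator to contain the unit circle. With both hypotheses verified, \cite[Theorem~7.4]{DDH} yields injectivity of $\pi$, hence of $\eta$. For the converse, assuming $\eta$ injective, the elements $p_v$ ($v\notin Y$), $s_es_e^*$ ($s(e)\in Y$), and $q_v$ ($v\in Y$) are nonzero in $C^*(\mathcal{G},X)$ — the last because the Cuntz--Krieger relation is not imposed at vertices of $Y$, equivalently $q_v=\psi(P_{v'})$ with $P_{v'}\neq 0$ — so their images under $\eta$ are nonzero; reading off $\eta(p_v)(F)=\chi_{D_v}F$, $\eta(s_es_e^*)(F)=\chi_{R_e}F$, and $\eta(q_v)(F)=\chi_{D_v\setminus\bigcup_{s(e)=v}R_e}F$ then yields (1), (2), and (3) respectively.

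I expect the genuine difficulty to be concentrated in the cycle step: the non-degeneracy and the converse are essentially bookkeeping with characteristic functions, whereas matching condition (4) to the ``unit circle in the spectrum'' condition of \cite[Theorem~7.4]{DDH} requires the spectral analysis of the weighted composition operator $\eta(s_\alpha)$, together with the careful verification that the $\mathcal{G}_X$-lift leaves the branching maps unchanged along a cycle whose ranges avoid $Y$.
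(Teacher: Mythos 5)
Your overall strategy coincides with the paper's: reduce to the lifted representation $\pi$ of $C^*(\mathcal{G}_X)$ via Theorem~\ref{DiagramadeBS} and the isomorphism $\psi$, verify $\mu(B_Z)>0$ for every nonempty $Z\in\mathcal{E}_X$ by the same three-case analysis (using hypotheses (1)--(3) and the fact that vertices of $Y$ are regular, hence emit edges), identify exitless cycles in $\mathcal{G}_X$ with exitless cycles in $\mathcal{G}$ whose ranges avoid $Y$ via Lemma~\ref{lemma relative condition L}, note that $B_{s(\alpha)}=D_{s(\alpha)}$ and that the lifted maps coincide with the $f_{\alpha_i}$ along such cycles, and run the converse exactly as the paper does (items (1), (2) directly from the formulas for $\eta$, item (3) from injectivity of $\pi=\eta\circ\psi$ applied to $P_{v'}\neq 0$).

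However, there is a genuine gap at the cycle step. You invoke \cite[Theorem~7.4]{DDH}, whose cycle hypothesis is \emph{spectral} (the spectrum of $\pi(S_\alpha)$ must contain the unit circle), and you assert that condition (4) ``is precisely the dynamical hypothesis forcing the spectrum of this operator to contain the unit circle.'' That implication is nowhere proven in your argument, and it is not a formality: it requires actual spectral analysis of the weighted composition operator $\pi(S_\alpha)$ — you yourself flag it as ``the genuine difficulty'' and then leave it unresolved. The paper sidesteps this entirely by citing \cite[Theorem~8.3]{DDH} instead: that is the uniqueness theorem for \emph{branching-system} representations, whose hypothesis is literally the dynamical condition — for every simple exitless cycle $\alpha$ and every finite $\mathcal{F}\subseteq\mathbb{N}$ there is a measurable $E\subseteq B_{s(\alpha)}$ with $\mu(E)\neq 0$ and $\mu\bigl(g_{n\alpha}(E)\cap E\bigr)=0$ for all $n\in\mathcal{F}$. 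After the identifications $B_{s(\alpha)}=D_{s(\alpha)}$ and $g_{n\alpha}=f_{n\alpha}$ (which you do carry out), hypothesis (4) matches this condition verbatim and no spectral work is needed. As written your proof is incomplete: either replace the citation by \cite[Theorem~8.3]{DDH}, or supply the spectral argument you defer — which would amount to reproving part of that theorem.
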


\begin{proof}

We will start by showing that if $(1)-(4)$ hold then $\eta$ is injective. By Theorem \ref{DiagramadeBS} we know that $\eta \circ \psi = \pi$, where $\pi$ is the representation induced by the branching system $\{Q_f, B_Z, g_f\}_{f \in \G_X^1, Z \in \mathcal{E}_X}$ in $(\mathcal{X},\mu)$. Since $\psi$ is an isomorphism, to show that $\eta$ is injective is enough to prove that $\pi$ is injective. To prove that $\pi$ is injective, we will show that $\pi$ satisfies the the hypothesis of Theorem $8.3$ in \cite{DDH}.
If $Z \in \mathcal{E}_X$ is non-empty then we can write $Z = A \cup (B \cap Y)'$, where $A,B \in \mathcal{E}$ due to Lemma~\ref{decomposition1}. Recall that:
$$B_{Z} := ( D_{A} - D_{A \cap Y} )\cup \left( \bigcup\limits_{g \ | \ s(g) \in A \cap Y } R_g \right)  \cup  \left( \bigcup\limits_{v \in B \cap Y} \left( D_v -    \bigcup\limits_{g \ | \ s(g) = v} R_g  \right) \right).$$

If $B \cap Y \neq \emptyset$, choose $v_0 \in B \cap Y$. Then
\[
D_{v_0} \setminus \bigcup_{\substack{e \in \mathcal{G}^1 \\ s(e)=v_0}} R_e
\ \subseteq\
\bigcup_{v \in B \cap Y} \left( D_v \setminus \bigcup_{\substack{g \in \mathcal{G}^1 \\ s(g)=v}} R_g \right)
\ \subseteq\ B_Z,
\]
and by (3) it follows that $\mu(B_Z) > 0$. If $B \cap Y = \emptyset$, then $A$ must be nonempty (otherwise
$Z$ would be empty). If $A \cap Y \neq \emptyset$, pick $v_0 \in A \cap Y$. Since $Y$ has no sinks, there exists $e \in \mathcal{G}^1$ with $s(e)=v_0$, and hence
\[
R_e \ \subseteq\ \bigcup_{\substack{g \in \mathcal{G}^1 \\ s(g)\in A \cap Y}} R_g \ \subseteq\ B_Z.
\]
Therefore, by (2) we obtain $\mu(B_Z) > 0$. Finally, if $A \cap Y = \emptyset$, then
\[
D_A \ =\ D_A \setminus D_{A \cap Y} \ \subseteq\ B_Z,
\]
and thus again $\mu(B_Z) > 0$. We conclude that $\mu(B_Z) > 0$ for every nonempty $Z \in \mathcal{E}_X$.

Now let $\alpha=(\alpha_i)_{i=1}^N$ be a simple cycle without exits in $\mathcal{G}_X$, and let
$\mathcal{F} \subset \mathbb{N}$ be finite. By Lemma~\ref{lemma relative condition L}, $\alpha$ is a cycle without exits in $\mathcal{G}$ with $r(\alpha_i) \notin Y$ for all $i=1,\dots,N$. By (4) there exists a measurable set $E \subseteq D_{s(\alpha)}$ with $\mu(E) \neq 0$ such that
$\mu\bigl(f_{n\alpha}(E) \cap E\bigr)=0$ for all $n \in \mathcal{F}$. Since
$s(\alpha)=r(\alpha_N)\notin Y$, we have $B_{s(\alpha)}=D_{s(\alpha)}$ by definition; moreover, by the construction of the branching system $\{Q_f,B_Z,g_f\}_{f \in \mathcal{G}_X^1,\; Z \in \mathcal{E}_X}$ we have $f_{n\alpha}=g_{n\alpha}$ (because $B_{r(\alpha_i)}=D_{r(\alpha_i)}$ for all $i$ with $r(\alpha_i)\notin Y$). Thus $\pi$ satisfies the hypotheses of \cite[Theorem~8.3]{DDH}, as required. We conclude that $\pi$ is injective.

Now suppose that $\eta$ is injective. Since $\eta$ is induced by the relative branching system, we
have, for all $F \in \mathcal{L}^2(\mathcal{X},\mu)$,
\[
\eta(p_v)(F) = \chi_{D_v}\,F
\qquad\text{and}\qquad
\eta(s_e s_e^*)(F) = \chi_{R_e}\,F .
\]
Therefore, (1) and (2) must hold; otherwise $\eta$ would annihilate a nonzero projection, which
contradicts injectivity.

To prove (3), use Theorem~\ref{DiagramadeBS} to write $\eta \circ \psi = \pi$, so $\pi$ is
injective. In particular, $\pi(P_v) \neq 0$ for every $v \in Y$, and hence
\[
0 \neq \pi(P_v) = \eta \circ \psi(P_v)
= \eta\!\left( p_v - \sum_{\substack{e \in \mathcal{G}^1\\ s(e)=v}} s_e s_e^* \right).
\]
Repeating the computation from the previous theorem, for $F \in \mathcal{L}^2(\mathcal{X},\mu)$ we obtain
\[
\eta\!\left( p_v - \sum_{\substack{e \in \mathcal{G}^1\\ s(e)=v}} s_e s_e^* \right)(F)
= \chi_{\,D_v \setminus \displaystyle\bigcup_{\substack{e \in \mathcal{G}^1\\ s(e)=v}} R_e}\,F,
\]
and thus (3) follows.

\end{proof}

The next (and final) result of this section shows that the converse of the previous theorem holds
when the counting measure is used.

\begin{theorem}\label{injectivitybscountingmeasure}
Let $(\mathcal{G}, X)$ be a relative ultragraph and set $Y=\mathrm{Reg}(\mathcal{G})\setminus X$,
with $r(e)\cap Y$ finite for all $e \in \mathcal{G}^1$. Let
$\{R_e,D_A,f_e\}_{e\in \mathcal{G}^1,\, A\in \mathcal{E}}$ be a $(\mathcal{G}, X)$-relative
branching system on a measure space $(\mathcal{X},\mu)$, where $\mu$ is the counting measure, and
let $\eta$ be the representation induced by $\{R_e,D_A,f_e\}$ via
Proposition~\ref{relativerepinducedbybranchingsystems}. Then $\eta$ is injective if and only if:
\begin{enumerate}[a)]
    \item $D_v \neq \emptyset$ for all $v \notin Y$;
    \item $R_e \neq \emptyset$ for all $e \in \mathcal{G}^1$ with $s(e) \in Y$;
    \item $D_v \setminus \displaystyle\bigcup_{\substack{e \in \mathcal{G}^1\\ s(e)=v}} R_e \neq \emptyset$
          for all $v \in Y$;
    \item for any simple cycle $\alpha=(\alpha_i)_{i=1}^N$ without exits such that
          $r(\alpha_i)\notin Y$ for all $i=1,\ldots,N$, and for any finite subset
          $\mathcal{F}\subseteq\mathbb{N}$, there exists $x \in D_{s(\alpha)}$ with
          $f_{n\alpha}(x) \neq x$ for all $n \in \mathcal{F}$.
\end{enumerate}
\end{theorem}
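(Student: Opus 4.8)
The plan is to deduce both implications by passing to the ordinary ultragraph $\mathcal{G}_X$: the identity $\pi=\eta\circ\psi$ of Theorem~\ref{DiagramadeBS}, with $\psi$ an isomorphism, reduces the injectivity of $\eta$ to that of the $\mathcal{G}_X$-branching-system representation $\pi$ attached to $\{Q_f,B_Z,g_f\}$ under the counting measure, to which the injectivity criterion of \cite{DDH} applies. Only the necessity of (d) is genuinely new; conditions (a)--(c) are already contained in Proposition~\ref{injectivitybs}, and the forward implication is essentially a restatement of that proposition for the counting measure.

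For the implication (a)--(d) $\Rightarrow$ injective, I would feed the hypotheses into Proposition~\ref{injectivitybs}. Since $\mu$ is counting measure, $\mu(S)>0$ means exactly $S\neq\emptyset$, so (a), (b), (c) are verbatim conditions (1), (2), (3) there. To get condition~(4) from (d): given a simple cycle $\alpha$ without exit with $r(\alpha_i)\notin Y$ and a finite $\mathcal{F}\subseteq\mathbb{N}$, take the point $x\in D_{s(\alpha)}$ furnished by (d) and put $E:=\{x\}$; then $\mu(E)=1\neq 0$, while $f_{n\alpha}(x)\neq x$ for all $n\in\mathcal{F}$ forces the singletons $\{f_{n\alpha}(x)\}$ and $\{x\}$ to be disjoint, so $\mu\bigl(f_{n\alpha}(E)\cap E\bigr)=0$. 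Thus (1)--(4) hold and $\eta$ is injective.

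For the converse, assume $\eta$ is injective. The second part of Proposition~\ref{injectivitybs} yields (1)--(3), which under the counting measure are precisely (a)--(c). For (d) I would work in $\mathcal{G}_X$: since $\psi$ is an isomorphism, $\pi=\eta\circ\psi$ is injective, and $\pi$ is the representation of the counting-measure $\mathcal{G}_X$-branching system $\{Q_f,B_Z,g_f\}$. Applying the counting-measure injectivity criterion of \cite{DDH} to $\pi$, for every cycle $\alpha$ without exit in $\mathcal{G}_X$ and every finite $\mathcal{F}$ there is a point of $B_{s(\alpha)}$ left unfixed by all $g_{n\alpha}$, $n\in\mathcal{F}$. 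It then remains to translate this statement. By Lemma~\ref{lemma relative condition L}, the cycles without exit in $\mathcal{G}_X$ are exactly the no-exit cycles of $\mathcal{G}$ with $r(\alpha_i)\cap Y=\emptyset$ for all $i$ (equivalently $r(\alpha_i)\notin Y$, each such range being a singleton), i.e.\ precisely the cycles occurring in (d); such cycles use only edges of $\mathcal{G}^1$, so $s(\alpha)\notin Y$ gives $B_{s(\alpha)}=D_{s(\alpha)}$ and $g_{n\alpha}=f_{n\alpha}$ on that set, exactly as recorded in the proof of Proposition~\ref{injectivitybs}. Hence an unfixed point for $g_{n\alpha}$ is an unfixed point for $f_{n\alpha}$ in $D_{s(\alpha)}$, which is (d).

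The step I expect to be the main obstacle is the necessity of (d), since it is the one piece not supplied by Proposition~\ref{injectivitybs} and it rests on the converse direction of the counting-measure injectivity theorem for ultragraph branching systems in \cite{DDH}. Carrying it out cleanly requires checking that the lifted system $\{Q_f,B_Z,g_f\}$ meets the hypotheses of that theorem, and verifying the set-theoretic dictionary $B_{s(\alpha)}=D_{s(\alpha)}$ and $g_{n\alpha}=f_{n\alpha}$ on no-exit cycles, so that the $\mathcal{G}_X$ fixed-point obstruction matches condition~(d) verbatim. One must also keep track of the added sinks $v'$ and the identity $\mathrm{Reg}(\mathcal{G}_X)=\mathrm{Reg}(\mathcal{G})$ to be sure the two families of cycles genuinely coincide.
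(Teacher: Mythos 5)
Your forward implication and the necessity of (a)--(c) coincide exactly with the paper's argument: under the counting measure, (a)--(c) are verbatim conditions (1)--(3) of Proposition~\ref{injectivitybs}, taking $E=\{x\}$ turns (d) into condition (4) there, and the converse for (a)--(c) is the ``moreover'' part of that proposition. Up to this point your proposal is correct and identical to the paper's proof.

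The gap is the necessity of (d) --- the step you yourself flag as the main obstacle. You obtain it by ``applying the counting-measure injectivity criterion of \cite{DDH}'' to the lifted representation $\pi$ of $C^*(\mathcal{G}_X)$, i.e., you invoke a theorem asserting that, for counting-measure branching systems of ultragraphs, injectivity \emph{implies} the no-fixed-point condition on cycles without exits. No such converse is available: \cite[Theorem~8.3]{DDH} (the result this paper actually uses) is a sufficiency statement, which is precisely why Proposition~\ref{injectivitybs} above recovers only conditions (1)--(3), and not (4), from injectivity of $\eta$. The paper closes this gap with a direct, self-contained argument that your proposal lacks: if (d) fails for a cycle $\alpha$ and a finite set $\mathcal{F}$, then every $x\in D_{s(\alpha)}$ satisfies $f_{n_x\alpha}(x)=x$ for some $n_x\in\mathcal{F}$; setting $t=\prod_{n\in\mathcal{F}} n$, each $n_x$ divides $t$, so $f_{t\alpha}=\mathrm{id}$ on $D_{s(\alpha)}$. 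Since $\mu$ is the counting measure, the Radon--Nikodym derivatives are $\equiv 1$, hence $\eta(s_{t\alpha})$ and $\eta(p_{s(\alpha)})$ are both multiplication by $\chi_{D_{s(\alpha)}}$, so $\eta(s_{t\alpha})=\eta(p_{s(\alpha)})$; as $s_{t\alpha}\neq p_{s(\alpha)}$ in $C^*(\mathcal{G},X)$ (e.g., by the gauge action), this contradicts injectivity. This ``product trick'' is the missing idea; without it --- or a precise citation of an if-and-only-if counting-measure theorem, which would itself require this very argument --- your proof of the necessity of (d) does not go through. Your dictionary between $\mathcal{G}$ and $\mathcal{G}_X$ (no-exit cycles coincide, $B_{s(\alpha)}=D_{s(\alpha)}$, $g_{n\alpha}=f_{n\alpha}$) is correctly set up, but it becomes unnecessary once the direct argument is run in $C^*(\mathcal{G},X)$ itself.
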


\begin{proof}
 If conditions $a) - d)$ above holds, then choosing $E = \{x\}$ in condition $(4)$ of Proposition \ref{injectivitybs} we see that $\eta$ is injective.

If $\eta$ is injective, then $(a)-(c)$ holds again by Proposition \ref{injectivitybs}. So, is enough to prove that $(d)$ holds. Let $\alpha = (\alpha_i)_{i = 1}^N$ be a simple cycle without exits such that $r(\alpha_i) \notin Y$ for all $i = 1, \ldots, N$ and for any finite subset $\mathcal{F} \subseteq \mathbb{N}$. 

If there exists $x \in D_{s(\alpha)}$ such that $f_{n\alpha}(x) \neq x$ for all $n \in \mathcal{F}$ we are done. If such an element $x \in D_{s(\alpha)}$ does not exists we will produce a contradiction. In fact, if that is the case then for all $x \in D_{s(\alpha)}$ there exists $n_x \in \mathcal{F}$ such that $f_{n_x \alpha}(x) = x$. As $\mathcal{F}$ is finite we can consider $t:= {\displaystyle \prod_{n \ | \ n \in \mathcal{F}} n }$; so the for all $x \in D_{s(\alpha)}$, the number $t$ is a multiple of $n_x$ and consequently 

$$f_{t\alpha}(x) = \underbrace{f_{\alpha} \circ f_{\alpha} \circ \ldots \circ f_{\alpha}}_{t \,\ \textrm{times } } (x) = \underbrace{f_{\alpha} \circ \ldots \circ f_{\alpha}}_{n_x \,\ \textrm{ times } } \circ \ldots \circ \underbrace{ f_{\alpha} \ldots \circ f_{\alpha}}_{n_x \,\ \textrm{times } } (x) = x $$
for all $x \in D_{s(\alpha)}$ showing that $f_{t\alpha} = Id$ in $D_{s(\alpha})$. Now, as $\mu$ is the counting measure, the Radon-Nikodym derivatives that appears in the branching system are the constant functions $1$ and thus the representation $\eta$ satisfies:

$$\eta(s_{t\alpha})(F) = \chi_{D_{s(\alpha)}} \cdot F \circ f_{t \alpha}^{-1} = \chi_{D_{s(\alpha)}} \cdot F \,\ \,\ \,\ \forall F \in L^2(\mu);$$
$$\eta(p_{s(\alpha})(F) = \chi_{D_{s(\alpha)}} \cdot F \,\ \,\ \,\ \forall F \in L^2(\mu).$$
Therefore $\eta(s_{t\alpha}) = \eta(p_{s(\alpha)})$ which contradicts the injectivity of $\eta$.
\end{proof}



\section{Markov interval maps with escape sets and associated representations}
\label{secmarkovescape2}

In this section, we introduce infinite partition Markov interval maps with escape sets and show how they induce representations of relative ultragraph algebras.

For a set $I$ we denote by $\mathring{I}$ its interior.

\begin{definition}\label{Markovclass}
    Let $I = [a,b) \subseteq \mathbb{R}$ be an interval, where $a \in \mathbb{R}$ and $b \in \mathbb{R} \cup \{\infty\}$. 
    We say that a map $g$ belongs to the \emph{Markov class of $I$}, denoted by $M(I)$, if:
    \begin{enumerate}
        \item There exists a sequence of closed intervals $(I_n)_{n \in \mathbb{N}}$ such that 
        \[
        \max(I_j) \leq \min(I_{j+1}), \quad |I_i \cap I_j| \leq 1 \ \text{for all } i,j \in \mathbb{N},
        \]
        \[
        \operatorname{dom}(g) = \bigcup_{n \in \mathbb{N}} I_n \subseteq I, \quad 
        \min(I_1) = \min(I), \quad \text{and } \sup(I_n) \to b \ \text{as } n \to \infty.
        \]
        
        \item The restriction $g_{|\mathring{I_i}}$ is injective, $\operatorname{Im}(g) \subseteq I$, and 
        \[
        g(\Gamma) \subseteq \Gamma, \quad \text{where } \Gamma = \bigcup_{n \in \mathbb{N}} \partial I_n.
        \]

        \item For all $i \in \mathbb{N}$, the set 
        \[
        g(I_i) \cap \left( \bigcup_{n \in \mathbb{N}} I_n \right)
        \]
        is a non-empty union of intervals of the form $I_n$.
    \end{enumerate}  
\end{definition}

\begin{remark}\label{emptyorall}
    From Condition (3) it follows that $g(I_i) \cap I_j$ is either empty, consists of a single point in $\Gamma$, or equals $I_j$. 
    Moreover, $g(I_i) \cap \mathring{I_j}$ is either empty or equal to $\mathring{I_j}$ for all $i,j \in \mathbb{N}$.
\end{remark}

\begin{remark}
    If $g: I \to I$ is a strictly monotone continuous function satisfying Conditions (1) and (3), 
    then Condition (2) is automatically fulfilled. 
    Some authors define Markov maps in different contexts under this assumption, 
    often together with additional regularity requirements (see \cite{OutroMarkov} and \cite{Rufus}). 
    We note that our definition differs slightly from those appearing in 
    \cite{CMP}, \cite{RMP5}, and \cite{RMP10}.
\end{remark}

We denote by $g_i$ the restriction of $g$ to $I_i$. 
By condition (1) above, the interval $I$ admits the following geometric description:

\begin{center}
\begin{tikzpicture}[x=1cm,y=1cm,line cap=round,line join=round]
  \draw[very thick] (0,0) -- (10,0);

  \foreach \x in {0,1,1.5,3,5,6,8} {
    \draw (\x,-0.25) -- (\x,0.25);
  }

  \node[left]  at (0,0)  {$a$};
  \node[right] at (10,0) {$b$};

  \node[below] at (0.5,0)  {$I_1$};
  \node[below] at (2.25,0) {$I_2$};
  \node[below] at (5.5,0)  {$I_3$};
  \node[below] at (7,0)    {$I_4$};

  \node[above] at (1.25,0.5) {$E_1$};
  \node[above] at (4,0.5)    {$E_2$};

  \node[below] at (9,-0.25) {$\cdots$};
  \node[above] at (9,0.25)  {$\cdots$};
\end{tikzpicture}
\end{center}

To make this picture precise, we write 
\[
I_1 = [c_0, c_1^{-}] 
\quad \text{and} \quad 
I_n = [c_{n-1}^{+}, c_{n}^{-}] \quad \text{for each } n \in \mathbb{N}.
\]
We then define the \textbf{escape sets} $E_n$ for each $n \in \mathbb{N}$ by
\[
E_n := (c_{n}^{-}, c_{n}^{+}).
\]
Note that $E_n$ may be empty for some values of $n$; this occurs precisely when 
$|I_n \cap I_{n+1}| = 1$. 
By construction, we have
\[
I = \left( \bigcup_{n \in \mathbb{N}} I_n \right) \cup \left( \bigcup_{n \in \mathbb{N}} E_n \right).
\]

If $g$ belongs to the Markov class of $I$, we associate to $g$ the matrix 
$A_g := (A_{ij})_{i,j \in \mathbb{N}}$, where
\[
A_{ij} =
\begin{cases}
    1 & \text{if } \mathring{I_j} \subseteq g(\mathring{I_i}), \\[4pt]
    0 & \text{otherwise}.
\end{cases}
\]

Let $\mathcal{G} = \mathcal{G}_g$ be the ultragraph induced by the matrix $A_g$, 
that is, 
\[
\mathcal{G} = (\mathcal{G}^0, \mathcal{G}^1, r, s),
\]
where
\[
\mathcal{G}^0 = \{ v_i \mid i \in \mathbb{N} \}, 
\quad 
\mathcal{G}^1 = \{ e_i \mid i \in \mathbb{N} \},
\]
\[
s(e_i) = v_i, 
\quad 
r(e_i) = \{ v_j \mid A_{ij} = 1 \} 
= \{ v_j \mid \mathring{I_j} \subseteq g(\mathring{I_i}) \}.
\]

The set
\[
E_g := \bigcup_{k \in \mathbb{N}} g^{-k} \left( \bigcup_{j \in \mathbb{N}} E_j \right)
\]
will play a crucial role in the next sections. 
Note that an element $x \in \operatorname{dom}(g)$ belongs to $E_g$ if and only if there exists 
$K \in \mathbb{N}$ such that $g^K(x) \notin \operatorname{dom}(g)$.

From now until the end of this chapter, let $g$ be a fixed Markov map and 
let $x \in E_g$ be a fixed element. 
We also write $\mathcal{G} = \mathcal{G}_g$ for the ultragraph induced by $g$, 
as defined above. 

Since $x \in E_g$, there exists a unique natural number $\tau(x)$ such that
\[
g^{\tau(x)-1}(x) \in \operatorname{dom}(g) 
\quad \text{and} \quad 
g^{\tau(x)}(x) \notin \operatorname{dom}(g),
\]
where by convention $g^0(x) = x$. 
We denote by $J = J_x$ the unique index such that 
$g^{\tau(x)}(x) \in E_J$, and define
\[
R_g(x) := 
\{ y \in \operatorname{dom}(g) \mid 
   g^n(y) = g^{\tau(x)}(x) \text{ for some } n \in \mathbb{N} \}.
\]


In the next lemma, we show that points in $R_g(x)$ cannot lie on the boundaries of the partition intervals, but must instead belong to their interiors. Moreover, we show that boundary points of $I_j$ never map into the interior of another interval.

\begin{lemma}\label{interior}
The following assertions hold:
\begin{enumerate}
    \item If $y \in R_g(x)$, then 
    $
    y \in \bigcup_{n \in \mathbb{N}} \mathring{I_n}.
    $

    \item If $\mathring{I_k} \subseteq g(I_j)$, then 
    $
    \mathring{I_k} \subseteq g(\mathring{I_j}).
    $
\end{enumerate}
\end{lemma}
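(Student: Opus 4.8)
The plan is to exploit two structural facts that both items share. First, the endpoint set $\Gamma=\bigcup_n\partial I_n$ is forward invariant and contained in $\operatorname{dom}(g)$: the former is Condition~(2), and the latter holds because $\partial I_n\subseteq I_n\subseteq\operatorname{dom}(g)$. Second, from the geometric description of $I$, the interiors $\mathring{I_n}=(c_{n-1}^{+},c_n^{-})$ and the escape sets $E_n=(c_n^{-},c_n^{+})$ are open intervals whose endpoints are \emph{consecutive} points of the ordered grid $\{c_0,c_1^{-},c_1^{+},c_2^{-},c_2^{+},\dots\}$; hence each of these open intervals is disjoint from $\Gamma$. I would record these observations first, since they are precisely what drives both arguments.

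For item~(2), I would write $g(I_j)=g(\mathring{I_j})\cup g(\partial I_j)$ and note that $g(\partial I_j)\subseteq g(\Gamma)\subseteq\Gamma$ by Condition~(2). Since $\mathring{I_k}\cap\Gamma=\emptyset$ by the first paragraph, the inclusion $\mathring{I_k}\subseteq g(I_j)$ forces $\mathring{I_k}\cap g(\partial I_j)=\emptyset$, and therefore $\mathring{I_k}\subseteq g(I_j)\setminus g(\partial I_j)\subseteq g(\mathring{I_j})$. This is a one-line set-theoretic computation once the disjointness $\mathring{I_k}\cap\Gamma=\emptyset$ is in hand, and it matches the informal statement that boundary points of $I_j$ never land in an interior.

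For item~(1), set $z:=g^{\tau(x)}(x)$. By hypothesis $z\in E_J$, and by the first paragraph $z\notin\Gamma$. Given $y\in R_g(x)$, there is $n$ with $g^n(y)=z$, which in particular means the finite orbit $y,g(y),\dots,g^{n-1}(y)$ lies in $\operatorname{dom}(g)$. I would argue by contradiction: if $y\in\Gamma$, then since $\Gamma\subseteq\operatorname{dom}(g)$ and $g(\Gamma)\subseteq\Gamma$, a straightforward induction gives $g^k(y)\in\Gamma$ for every $k\le n$, whence $z=g^n(y)\in\Gamma$, contradicting $z\notin\Gamma$. Thus $y\notin\Gamma$; since $\operatorname{dom}(g)=\bigl(\bigcup_n\mathring{I_n}\bigr)\cup\Gamma$ and $y\in\operatorname{dom}(g)$, we conclude $y\in\bigcup_n\mathring{I_n}$.

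The only genuinely delicate point is to ensure the iteration in item~(1) is legitimate, namely that every intermediate iterate of $y$ lies in $\operatorname{dom}(g)$ so that $g^k(y)$ is defined; this is automatic from the very meaning of $g^n(y)=z$, but it is worth stating explicitly, alongside the forward invariance $g(\Gamma)\subseteq\Gamma$ that propagates membership in $\Gamma$ along the orbit. Everything else reduces to the elementary disjointness statements about the grid of endpoints, so I anticipate no serious obstacle.
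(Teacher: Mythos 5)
Your proposal is correct and follows essentially the same route as the paper: item (1) is proved by contradiction using the forward invariance $g(\Gamma)\subseteq\Gamma$ together with $E_J\cap\Gamma=\emptyset$, and item (2) uses that boundary points of $I_j$ map into $\Gamma$, which is disjoint from $\mathring{I_k}$. The only differences are cosmetic: you phrase item (2) set-theoretically via $g(I_j)=g(\mathring{I_j})\cup g(\partial I_j)$ where the paper picks a preimage point of a given $x\in\mathring{I_k}$, and you spell out the grid-ordering facts ($\mathring{I_n}\cap\Gamma=\emptyset$, $E_n\cap\Gamma=\emptyset$) that the paper uses implicitly.
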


\begin{proof}
\begin{enumerate}
    \item By hypothesis, there exists $n \in \mathbb{N}$ such that 
    \[
    g^n(y) = g^{\tau(x)}(x) \in E_{J_x} = E_J.
    \]
    Since $\Gamma \cap E_J = \emptyset$, we conclude that 
    $g^n(y) \notin \Gamma$. 
    By Condition (2) of Definition~\ref{Markovclass}, we know that 
    $g(\Gamma) \subseteq \Gamma$, and therefore $g^n(\Gamma) \subseteq \Gamma$. 
    Hence, if $y \in \Gamma$, then $g^n(y) \in \Gamma$, a contradiction. 
    Thus $y \notin \Gamma$, which proves the claim.

    \item Suppose $x \in \mathring{I_k}$. 
    By hypothesis, $x = g(x_1)$ for some $x_1 \in I_j$. 
    If $x_1 \in \partial I_j \subseteq \Gamma$, then $x = g(x_1) \in \Gamma$, 
    since $g(\Gamma) \subseteq \Gamma$. 
    But this contradicts the fact that 
    $\mathring{I_k} \cap \Gamma = \emptyset$. 
    Therefore $x_1 \in \mathring{I_j}$, and the result follows.
\end{enumerate}
\end{proof}

Our goal in the following pages is to construct representations of certain 
relative ultragraph algebras associated with $\mathcal{G} = \mathcal{G}_g$ 
using the Markov maps. 
We will build such representations in the $C^*$-algebra $B(\ell^2(R_g(x)))$. 
Recall that $\ell^2(R_g(x))$ is the space of square-summable functions 
$f:R_g(x) \to \mathbb{C}$ with respect to the counting measure. 
It has a canonical orthonormal basis given by the characteristic functions 
$\delta_y$ for $y \in R_g(x)$. 

For each $B \in \mathcal{E}$ and $i \in \mathbb{N}$ we define the following 
linear operators on the Hilbert basis of $\ell^2(R_g(x))$:
\[
W_B(\delta_y) 
   = \chi_{\left( \bigcup\limits_{\{j \mid v_j \in B \}} I_j \right)}(y)\,\delta_y,
\]
\[
T_{e_i}(\delta_y) 
   = \chi_{\left( \bigcup\limits_{\{j \mid v_j \in r(e_i) \}} I_j \right)}(y)\,
      \delta_{g_i^{-1}(y)}.
\]

It is straightforward to see that $W_B$ is well defined and extends to a 
bounded linear operator on $\ell^2(R_g(x))$. 
To justify that $T_{e_i}$ is well defined, observe that if 
$y \notin \bigcup_{\{j \mid v_j \in r(e_i)\}} I_j$, 
then $T_{e_i}(\delta_y) = 0$. 
Suppose instead that 
$y \in \bigcup_{\{j \mid v_j \in r(e_i)\}} I_j$. 
Then there exists $j \in \mathbb{N}$ with $v_j \in r(e_i)$ and $y \in I_j$. 
Consequently, $\mathring{I_j} \subseteq g(\mathring{I_i})$. 
Since $y \in R_g(x)$, Lemma~\ref{interior} implies $y \in \mathring{I_j}$. 
Thus $y = g(y_i)$ for a unique $y_i \in \mathring{I_i}$, because $g_i$ is injective. 
It follows that $g_i^{-1}(y) = y_i$ is well defined. 
Moreover, since $y \in R_g(x)$, the same holds for $g_i^{-1}(y)$, and hence 
$T_{e_i}$ extends to a bounded operator on $\ell^2(R_g(x))$.

\begin{proposition}
    If $B \in \mathcal{E}$ and $i \in \mathbb{N}$, then $  W_B^* = W_B$, $W_B^2 = W_B,$
    and for all $y \in R_g(x)$,
    \[
    T_{e_i}^*(\delta_y) 
       = \chi_{\left( \bigcup\limits_{\{j \mid v_j \in r(e_i)\}} I_j \right)}( g(y)) \,
         \chi_{I_i}(y)\,\delta_{g(y)}.
    \]
\end{proposition}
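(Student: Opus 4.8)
The plan is to handle $W_B$ and $T_{e_i}^*$ separately, exploiting throughout that $\{\delta_y\}_{y\in R_g(x)}$ is an orthonormal basis, so that every bounded operator is determined by its matrix coefficients on this basis. For $W_B$ I would observe that it acts diagonally: $W_B\delta_y=c_y\,\delta_y$ with $c_y:=\chi_{(\bigcup_{\{j\mid v_j\in B\}}I_j)}(y)\in\{0,1\}$. Self-adjointness is then immediate, since $\langle W_B\delta_y,\delta_z\rangle$ and $\langle\delta_y,W_B\delta_z\rangle$ both vanish unless $y=z$, in which case they equal $c_y$ and $\overline{c_z}=c_z=c_y$ respectively (the scalars being real). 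Idempotency follows from $c_y^2=c_y$, giving $W_B^2\delta_y=c_y^2\delta_y=c_y\delta_y=W_B\delta_y$. Thus $W_B$ is a projection.

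For the adjoint of $T_{e_i}$, write $R_{e_i}:=\bigcup_{\{j\mid v_j\in r(e_i)\}}I_j$. I would first record the matrix coefficient
\[
\langle T_{e_i}\delta_a,\delta_b\rangle=\chi_{R_{e_i}}(a)\,[\,g_i^{-1}(a)=b\,],
\]
where $[\,\cdot\,]$ denotes the indicator of the displayed condition (taken to be $0$ when $g_i^{-1}(a)$ is undefined). Since all coefficients are real, the adjoint has coefficients $\langle\delta_a,T_{e_i}^*\delta_b\rangle=\chi_{R_{e_i}}(b)\,[\,g_i^{-1}(b)=a\,]$, whence $T_{e_i}^*\delta_y=\sum_{z\in R_g(x)}\chi_{R_{e_i}}(z)\,[\,g_i^{-1}(z)=y\,]\,\delta_z$. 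The key step is that this sum collapses to at most one term: because $g_i=g|_{I_i}$ is injective with inverse defined on its image, there is some $z$ with $g_i^{-1}(z)=y$ precisely when $y\in\mathring{I_i}$, and then $z=g(y)$ is the unique such index. Hence $T_{e_i}^*\delta_y=\chi_{R_{e_i}}(g(y))\,\chi_{\mathring{I_i}}(y)\,\delta_{g(y)}$, and since $y\in R_g(x)$ lies in $\bigcup_n\mathring{I_n}$ by Lemma~\ref{interior}(1), I may replace $\chi_{\mathring{I_i}}(y)$ by $\chi_{I_i}(y)$, obtaining the asserted formula. Equivalently, one can verify directly that the operator on the right-hand side satisfies the defining identity $\langle T_{e_i}\delta_a,\delta_b\rangle=\langle\delta_a,S\delta_b\rangle$ on all basis pairs; the two conditions ``$a\in R_{e_i}$ and $g_i^{-1}(a)=b$'' and ``$b\in I_i$, $g(b)\in R_{e_i}$, and $a=g(b)$'' are seen to be equivalent.

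The main obstacle I anticipate is bookkeeping rather than depth: one must keep the closed intervals $I_i$ that appear inside the characteristic functions consistent with the open intervals $\mathring{I_i}$ on which $g_i$ is genuinely invertible. This is exactly what Lemma~\ref{interior}(1) resolves, since it guarantees that every index $y\in R_g(x)$ avoids the boundary set $\Gamma$, so that $\chi_{I_i}(y)=\chi_{\mathring{I_i}}(y)$ for all such $y$ and the closed/open discrepancy is harmless. A secondary point to check is that whenever the coefficient $\chi_{R_{e_i}}(g(y))\,\chi_{I_i}(y)$ is nonzero, the target $g(y)$ indeed lies in $R_g(x)$, so that $\delta_{g(y)}$ is a legitimate basis vector. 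This holds because a nonzero coefficient forces $g(y)\in R_{e_i}\subseteq\operatorname{dom}(g)$, which rules out the escape step and, together with $y\in R_g(x)$, places $g(y)$ in $R_g(x)$.
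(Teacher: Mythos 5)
Your proposal is correct and takes essentially the same approach as the paper: both arguments verify the adjoint identity through matrix coefficients against the orthonormal basis $\{\delta_y\}_{y\in R_g(x)}$, both invoke Lemma~\ref{interior} to pass between $I_i$ and $\mathring{I_i}$, and both rule out the escape step (your ``$g(y)\in\operatorname{dom}(g)$'' observation is the paper's ``$N\geq 2$'' argument) to ensure $g(y)\in R_g(x)$ whenever the coefficient $\chi_{I_i}(y)\,\chi_{\left(\bigcup_{\{j\mid v_j\in r(e_i)\}}I_j\right)}(g(y))$ is nonzero. One transcription slip: your intermediate formula $\langle\delta_a,T_{e_i}^*\delta_b\rangle=\chi_{R_{e_i}}(b)\,[\,g_i^{-1}(b)=a\,]$ has the roles of $a$ and $b$ transposed (as written it would force $T_{e_i}^*=T_{e_i}$); the correct coefficient is $\chi_{R_{e_i}}(a)\,[\,g_i^{-1}(a)=b\,]$, which is exactly what your displayed expansion of $T_{e_i}^*\delta_y$ actually uses, so nothing downstream is affected.
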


\begin{proof}
It is clear that $W_B^2 = W_B$. 
Let $y_1, y_2 \in R_g(x)$. 
A direct computation shows that
\[
\int \chi_{\left( \bigcup_{\{j \mid v_j \in B\}} I_j \right)}(y_1)\,
   \delta_{y_1}\,\delta_{y_2}\, d\mu
   = \int \delta_{y_1}\,
   \chi_{\left( \bigcup_{\{j \mid v_j \in B\}} I_j \right)}(y_2)\,
   \delta_{y_2}\, d\mu,
\]
and therefore $W_B^* = W_B$.  

To check that $T_{e_i}^*$ is well defined, note first that 
$T_{e_i}^*(\delta_y) = 0$ if 
\[
g(y) \notin \bigcup_{\{j \mid v_j \in r(e_i)\}} I_j.
\] 
Thus we may assume that 
$g(y) \in \bigcup_{\{j \mid v_j \in r(e_i)\}} I_j$. 
We claim that in this case $g(y) \in R_g(x)$.  

Indeed, since $y \in R_g(x)$, there exists $N \in \mathbb{N}$ such that 
$g^N(y) = g^{\tau(x)}(x) \in E_J$. 
If $N=1$, then $g(y) \in E_J$, which is impossible because
\[
E_J \cap \bigcup_{\{j \mid v_j \in r(e_i)\}} I_j = \emptyset.
\]
Hence $N \geq 2$, and consequently $g(y) \in R_g(x)$. 
This shows that $T_{e_i}^*$ is well defined, and one can verify that it extends to a bounded linear operator on $\ell^2(R_g(x))$.

Finally, to check the adjoint property, observe that
\[
\int \delta_{y_1}\,
   \chi_{\left( \bigcup_{\{j \mid v_j \in r(e_i)\}} I_j \right)}(g(y_2))\,
   \chi_{I_i}(y_2)\,
   \delta_{g(y_2)}\, d\mu = (*)
\] is equal to \[ (*)=
\begin{cases}
   0 & \text{if } y_2 \notin I_i, \\[4pt]
   0 & \text{if } y_2 \in I_i \text{ and } g_i(y_2) \neq y_1, \\[4pt]
   0 & \text{if } y_2 \in I_i,\, g_i(y_2) = y_1, 
       \text{ and } y_1 \notin \bigcup_{\{j \mid v_j \in r(e_i)\}} I_j, \\[4pt]
   1 & \text{if } y_2 \in I_i,\, g_i(y_2) = y_1, 
       \text{ and } y_1 \in \bigcup_{\{j \mid v_j \in r(e_i)\}} I_j.
\end{cases}
\]

It is straightforward to verify that the same holds for
\[
\int \chi_{\left( \bigcup_{\{j \mid v_j \in r(e_i)\}} I_j \right)}(y_1)\,
   \delta_{g_i^{-1}(y_1)}\,\delta_{y_2}\, d\mu.
\]
This proves the result.
\end{proof}

The next theorem shows how to construct a representation of the Toeplitz algebra using Markov maps.

\begin{theorem}\label{basic}
    The map 
    $
    \nu_x: C^*(\mathcal{G}, \emptyset) \;\longrightarrow\; B(\ell^2(R_g(x)))
    $
    defined by
    $
    \nu_x(p_B) = W_B$, $ 
    \nu_x(s_{e_i}) = T_{e_i},
    $
    is a $*$-homomorphism.
\end{theorem}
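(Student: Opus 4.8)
The plan is to invoke the universal property of $C^*(\mathcal{G},\emptyset)$. By Definition~\ref{relative} with $X=\emptyset$, this algebra is universal for families $\{s_e\}$ of partial isometries with orthogonal ranges together with projections $\{p_A\}$ satisfying only conditions (1)--(3) of Definition~\ref{def of C^*(mathcal{G})}; condition (4) is vacuous here because $X=\emptyset$. Hence it suffices to show that the operators $W_B$ and $T_{e_i}$ on $\ell^2(R_g(x))$ satisfy these relations, after which $\nu_x$ extends to the desired $*$-homomorphism. The preceding proposition already records that each $W_B$ is a self-adjoint idempotent and provides the adjoint formula for $T_{e_i}^*$, so those facts may be used freely.

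The first step is the Boolean relations (1). Since $W_B$ is multiplication by $\chi_{\bigcup_{\{j \mid v_j\in B\}} I_j}$, we have $W_\emptyset=0$, and $W_A W_B$ is multiplication by $\chi_{(\bigcup_{v_j\in A} I_j)\cap(\bigcup_{v_j\in B} I_j)}$. Now $(\bigcup_{v_j\in A} I_j)\cap(\bigcup_{v_j\in B} I_j)$ equals $\bigcup_{\{j \mid v_j\in A\cap B\}} I_j$ together with the single boundary points $I_j\cap I_k$ coming from pairs $j\neq k$. Here is the crucial point: by Lemma~\ref{interior} every $y\in R_g(x)$ lies in some interior $\mathring{I_n}$, so these boundary points contribute nothing on $\ell^2(R_g(x))$, and therefore $W_AW_B=W_{A\cap B}$ as operators. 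The identity $W_{A\cup B}=W_A+W_B-W_{A\cap B}$ then follows by inclusion--exclusion for characteristic functions. This use of Lemma~\ref{interior} to discard the shared endpoints is the only genuinely delicate step, and I expect it to be the main obstacle: without it, the endpoint overlaps of the intervals $I_n$ would destroy the multiplicativity of the $W_B$.

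Next I would verify the range relation $T_{e_i}^*T_{e_i}=W_{r(e_i)}$, i.e. relation (2). Evaluating on a basis vector $\delta_y$ with $y\in R_g(x)$: if $y\notin\bigcup_{\{j \mid v_j\in r(e_i)\}} I_j$ both sides vanish, while if $y\in\mathring{I_j}$ for some $j$ with $v_j\in r(e_i)$ then $T_{e_i}\delta_y=\delta_{g_i^{-1}(y)}$ with $g_i^{-1}(y)\in\mathring{I_i}$; applying the adjoint formula of the preceding proposition and using $g(g_i^{-1}(y))=y$ returns $\delta_y$, which is exactly $W_{r(e_i)}\delta_y$. In particular $T_{e_i}^*T_{e_i}$ is a projection, so each $T_{e_i}$ is automatically a partial isometry.

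Finally I would treat relation (3) and the orthogonality of ranges. A symmetric computation shows that $T_{e_i}T_{e_i}^*$ is the diagonal projection onto $\{\delta_y : y\in\mathring{I_i},\ g(y)\in\bigcup_{\{j \mid v_j\in r(e_i)\}} I_j\}$, where one uses $g_i^{-1}(g(y))=y$ for $y\in\mathring{I_i}$. Since its support lies in $\mathring{I_i}$, we obtain $T_{e_i}T_{e_i}^*\le W_{\{v_i\}}$, the multiplication operator by $\chi_{I_i}$; as $s(e_i)=v_i$, this is precisely relation (3). Moreover, for $i\neq k$ the supports $\mathring{I_i}$ and $\mathring{I_k}$ are disjoint, so $T_{e_i}T_{e_i}^*$ and $T_{e_k}T_{e_k}^*$ are orthogonal projections, giving the required orthogonality of ranges. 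Having checked (1)--(3), the orthogonality of ranges, and the partial-isometry property, the universal property of $C^*(\mathcal{G},\emptyset)$ yields the $*$-homomorphism $\nu_x$.
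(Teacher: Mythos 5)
Your proposal is correct and follows essentially the same route as the paper: verify relations (1)--(3) of Definition~\ref{def of C^*(mathcal{G})} on the basis vectors $\delta_y$, use Lemma~\ref{interior} to reduce to the disjoint interiors $\mathring{I_n}$, and invoke the universal property with condition (4) vacuous since $X=\emptyset$. The only (harmless) variations are that you deduce the partial-isometry property from $T_{e_i}^*T_{e_i}$ being a projection rather than from $T_{e_i}T_{e_i}^*T_{e_i}=T_{e_i}$, and you phrase relation (3) via support containment of diagonal projections instead of the paper's identity $W_{s(e_i)}\circ T_{e_i}\circ T_{e_i}^*=T_{e_i}\circ T_{e_i}^*$.
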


\begin{proof}
We check that $\{W_B, T_{e_i}\}_{B \in \mathcal{E},\, i \in \mathbb{N}}$ 
satisfies the relations that define $C^*(\mathcal{G}, \emptyset)$.  

First, $W_{\emptyset} = 0$.  
Since $R_g(x) \subseteq \bigcup_{i \in \mathbb{N}} \mathring{I_i}$ and 
$\mathring{I_i} \cap \mathring{I_j} = \emptyset$ for $i \neq j$, 
we obtain
\[
W_A \circ W_B = W_{A \cap B},
\qquad 
W_{A \cup B} = W_A + W_B - W_{A \cap B},
\]
which shows that the first item of Definition~\ref{relative} is satisfied.  

Next, for $y \in R_g(x)$ we compute:
\begin{align*}
     T_{e_i}^* \circ T_{e_i}(\delta_y) 
     &= T_{e_i}^* \Big( \chi_{\bigcup_{\{j \mid v_j \in r(e_i)\}} I_j}(y)\, 
                         \delta_{g_i^{-1}(y)} \Big) \\
     &= \chi_{\bigcup_{\{j \mid v_j \in r(e_i)\}} I_j}(y)\,
        \chi_{\bigcup_{\{j \mid v_j \in r(e_i)\}} I_j}(y)\,
        \chi_{I_i}(g_i^{-1}(y))\, \delta_y \\
     &= \chi_{\bigcup_{\{j \mid v_j \in r(e_i)\}} I_j}(y)\,\delta_y \\
     &= W_{r(e_i)}(\delta_y).
\end{align*}
Thus the second item holds.  
Applying $T_{e_i}$ to the equality above yields 
$T_{e_i} \circ T_{e_i}^* \circ T_{e_i} = T_{e_i}$, 
so $T_{e_i}$ is a partial isometry.  

Now, we compute:
\begin{align*}
    T_{e_i} \circ T_{e_i}^*(\delta_y) 
    &= T_{e_i}\Big(
       \chi_{\bigcup_{\{j \mid v_j \in r(e_i)\}} I_j}(g(y))\,
       \chi_{I_i}(y)\,\delta_{g(y)}
       \Big) \\
    &= \chi_{\bigcup_{\{j \mid v_j \in r(e_i)\}} I_j}(g(y))\,
       \chi_{I_i}(y)\,
       \chi_{\bigcup_{\{j \mid v_j \in r(e_i)\}} I_j}(g(y))\,
       \delta_y \\
    &= \chi_{\bigcup_{\{j \mid v_j \in r(e_i)\}} I_j}(g(y))\,
       \chi_{I_i}(y)\,\delta_y.
\end{align*}
Since
\[
W_{s(e_i)}(\delta_y) 
   = \chi_{\bigcup_{\{j \mid v_j \in s(e_i)\}} I_j}(y)\,\delta_y
   = \chi_{I_i}(y)\,\delta_y,
\]
we conclude that 
\[
W_{s(e_i)} \circ T_{e_i} \circ T_{e_i}^* 
   = T_{e_i} \circ T_{e_i}^*,
\]
which shows that the third item of Definition~\ref{relative} holds.  

Moreover, the computation above also shows that 
\[
T_{e_i} \circ T_{e_i}^* \circ T_{e_j} \circ T_{e_j}^*(\delta_y) = 0
\quad \text{for } i \neq j,
\]
so the partial isometries have orthogonal ranges.  

Finally, item (4) is vacuously satisfied since we chose $X = \emptyset$.  
\end{proof}

To obtain representations of relative $C^*$-algebras of the form 
$C^*(\mathcal{G},X)$ with $X \neq \emptyset$, 
we require an additional hypothesis. 
This assumption will be crucial throughout the remainder of this work 
and is introduced in the next lemma.

\begin{lemma}\label{lemaprincipalmarkov}
     If $i \in \mathbb{N}$ is such that 
     $\mathring{I_i} \cap g^{-1}(E_J) = \emptyset$, 
     then for all $y \in \mathring{I_i} \cap R_g(x)$ 
     there exists $j \in \mathbb{N}$ such that 
     $g(y) \in I_j$ and $v_j \in r(e_i)$.
\end{lemma}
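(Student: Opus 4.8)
The plan is to fix $y \in \mathring{I_i} \cap R_g(x)$ and to locate the desired index $j$ by tracking a single step of the orbit of $y$. Since $y \in R_g(x)$, there is some $n \in \mathbb{N}$ with $g^n(y) = g^{\tau(x)}(x) \in E_J$. My first move is to rule out $n = 1$: if $n = 1$ then $g(y) = g^{\tau(x)}(x) \in E_J$, so $y \in g^{-1}(E_J)$, contradicting the standing hypothesis $\mathring{I_i} \cap g^{-1}(E_J) = \emptyset$ together with $y \in \mathring{I_i}$. Hence $n \geq 2$. This is precisely where the hypothesis on $g^{-1}(E_J)$ enters, and it is the conceptual heart of the argument: it forbids $y$ from escaping in one step.

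With $n \geq 2$, the mere fact that $g^n(y)$ is defined forces the intermediate points $y, g(y), \dots, g^{n-1}(y)$ to lie in $\operatorname{dom}(g)$; in particular $g(y) \in \operatorname{dom}(g)$. Since moreover $g^{\,n-1}\bigl(g(y)\bigr) = g^{\tau(x)}(x)$ with $n-1 \geq 1$, I would conclude that $g(y) \in R_g(x)$. Lemma~\ref{interior}(1) then places $g(y)$ in $\bigcup_m \mathring{I_m}$, so there is a unique index $j$ with $g(y) \in \mathring{I_j} \subseteq I_j$, which already secures the first assertion $g(y) \in I_j$.

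It then remains to verify $v_j \in r(e_i)$, that is, $\mathring{I_j} \subseteq g(\mathring{I_i})$. Here I would observe that $g(y) \in g(I_i) \cap \mathring{I_j}$, so this intersection is nonempty; by the all-or-nothing dichotomy recorded in Remark~\ref{emptyorall} it must therefore equal $\mathring{I_j}$, giving $\mathring{I_j} \subseteq g(I_i)$. Finally, Lemma~\ref{interior}(2) upgrades this to $\mathring{I_j} \subseteq g(\mathring{I_i})$, which is exactly the condition $A_{ij} = 1$, i.e. $v_j \in r(e_i)$. Combined with $g(y) \in I_j$, this finishes the proof.

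I expect the only genuinely delicate step to be the dichotomy on $n$: one must be careful that the hypothesis on $g^{-1}(E_J)$ excludes a one-step escape, while the well-definedness of the $n$-fold iterate (for $n\ge 2$) independently supplies $g(y) \in \operatorname{dom}(g)$. Once $g(y)$ is known to lie in the interior of a partition interval, the identification of $j$ and the verification of the edge relation are essentially mechanical consequences of Remark~\ref{emptyorall} and Lemma~\ref{interior}(2).
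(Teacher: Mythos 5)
Your proposal is correct and follows essentially the same route as the paper's proof: rule out a one-step escape via the hypothesis $\mathring{I_i}\cap g^{-1}(E_J)=\emptyset$, deduce $g(y)\in\operatorname{dom}(g)$ and (via Lemma~\ref{interior}) $g(y)\in\mathring{I_j}$, then use Remark~\ref{emptyorall} together with Lemma~\ref{interior}(2) to conclude $v_j\in r(e_i)$. If anything, you make explicit a step the paper leaves implicit, namely that $g(y)\in R_g(x)$ is needed before invoking Lemma~\ref{interior}(1).
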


\begin{proof}
Let $y \in \mathring{I_i} \cap R_g(x)$.  
By hypothesis, there exists $N \in \mathbb{N}$ such that 
\[
g^N(y) = g^{\tau(x)}(x) \in E_J.
\]
If $N=1$, then $g(y) \in E_J$, which would imply 
$y \in \mathring{I_i} \cap g^{-1}(E_J)$, a contradiction. 
Hence $N \geq 2$.  

Therefore $g(y) \in \operatorname{dom}(g)$, so we can choose 
$j \in \mathbb{N}$ with $g(y) \in I_j$. 
By Lemma~\ref{interior}, this implies $g(y) \in \mathring{I_j}$; 
in particular, $g(y) \in \mathring{I_j} \cap g(\mathring{I_i})$.

To conclude, it suffices to show that $v_j \in r(e_i)$.  
By the definition of the ultragraph $\mathcal{G}$, 
this is equivalent to proving that 
$\mathring{I_j} \subseteq g(\mathring{I_i})$.  
Indeed, since 
\[
g(y) \in \mathring{I_j} \cap g(\mathring{I_i}) 
   \subseteq \mathring{I_j} \cap g(I_i),
\] 
Remark~\ref{emptyorall} yields 
$\mathring{I_j} \cap g(I_i) = \mathring{I_j}$, 
and thus $\mathring{I_j} \subseteq g(I_i)$.  
The result then follows from Lemma~\ref{interior}.
\end{proof}

As a consequence of the above, under the additional hypothesis of 
    Lemma~\ref{lemaprincipalmarkov}, 
    the projection associated to a vertex $v_i$ decomposes as the sum of the 
    range projections of the edges emitted by $v_i$, and hence the representation constructed from the Markov map satisfies the full 
    set of Cuntz--Krieger relations. We state this precisely below.

\begin{corollary}\label{CK4}
    If $i \in \mathbb{N}$ is such that 
    $\mathring{I_i} \cap g^{-1}(E_J) = \emptyset$, 
    then
    \[
    W_{v_i} 
       = T_{e_i} \circ T_{e_i}^* 
       = \sum_{\,e \in \mathcal{G}^1 \,:\, s(e) = v_i} 
         T_{e} \circ T_{e}^*.
    \]
\end{corollary}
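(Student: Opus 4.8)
The plan is to verify the operator identity by evaluating both sides on the canonical orthonormal basis $\{\delta_y : y \in R_g(x)\}$. First I would dispose of the second equality, which is essentially a tautology: by the construction of $\mathcal{G} = \mathcal{G}_g$ we have $s(e_k) = v_k$ for every $k$, so the unique edge with source $v_i$ is $e_i$ itself. Hence $\sum_{e \in \mathcal{G}^1 : s(e) = v_i} T_e T_e^* = T_{e_i} T_{e_i}^*$, and the whole content of the corollary reduces to proving the single identity $W_{v_i} = T_{e_i} T_{e_i}^*$.

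To compare these two operators I would recall the formulas already established. On the one hand, since $B = \{v_i\}$ selects only the interval $I_i$, we have $W_{v_i}(\delta_y) = \chi_{I_i}(y)\,\delta_y$. On the other hand, the preceding proposition gives
\[
T_{e_i} T_{e_i}^*(\delta_y) = \chi_{\left(\bigcup_{\{j \mid v_j \in r(e_i)\}} I_j\right)}(g(y))\,\chi_{I_i}(y)\,\delta_y.
\]
Thus the two operators agree on $\delta_y$ exactly when the extra factor $\chi_{\left(\bigcup_{\{j \mid v_j \in r(e_i)\}} I_j\right)}(g(y))$ equals $1$ on the support $\{y \in I_i\}$ of $\chi_{I_i}$, so it remains only to check this on points $y \in I_i \cap R_g(x)$.

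For such a point I would first apply Lemma~\ref{interior}(1): since $y \in R_g(x)$ it lies in some open interval $\mathring{I_n}$, and because the intervals $I_n$ are disjoint apart from shared boundary points, the condition $y \in I_i$ forces $y \in \mathring{I_i}$. Now the hypothesis $\mathring{I_i} \cap g^{-1}(E_J) = \emptyset$ is precisely what lets me invoke Lemma~\ref{lemaprincipalmarkov}, which produces $j \in \mathbb{N}$ with $g(y) \in I_j$ and $v_j \in r(e_i)$; in particular $g(y) \in \bigcup_{\{j \mid v_j \in r(e_i)\}} I_j$, so the characteristic factor evaluates to $1$. For $y \notin I_i$ both sides vanish since $\chi_{I_i}(y) = 0$. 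Combining the two cases yields $W_{v_i}(\delta_y) = T_{e_i} T_{e_i}^*(\delta_y)$ for every basis vector, and hence the operator equality.

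The only genuinely substantive point is this last step, and it is exactly where the escape hypothesis is essential: absent $\mathring{I_i} \cap g^{-1}(E_J) = \emptyset$, a point $y \in \mathring{I_i} \cap R_g(x)$ could have $g(y) \in E_J$, so that $g(y)$ lands in no $I_j$ and the extra factor drops to $0$, breaking the Cuntz--Krieger relation at $v_i$. Since the heart of the matter has already been isolated in Lemma~\ref{lemaprincipalmarkov}, the corollary follows from the bookkeeping above with no further estimates.
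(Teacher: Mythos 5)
Your proof is correct and follows essentially the same route as the paper: dispose of the second equality via the definition of $\mathcal{G}_g$ (the unique edge with source $v_i$ is $e_i$), then verify $W_{v_i}=T_{e_i}T_{e_i}^*$ on the basis $\{\delta_y\}$, using Lemma~\ref{interior} to place $y\in\mathring{I_i}$ and Lemma~\ref{lemaprincipalmarkov} to show the factor $\chi_{\bigcup_{\{j\mid v_j\in r(e_i)\}}I_j}(g(y))$ equals $1$. The only cosmetic difference is that the formula for $T_{e_i}T_{e_i}^*$ you quote is established in the proof of Theorem~\ref{basic} rather than in the proposition computing $T_{e_i}^*$, but this does not affect the argument.
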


\begin{proof}
The second equality follows immediately from the definition of the ultragraph 
$\mathcal{G}$.  

For the first equality, let $y \in R_g(x)$. Then
\begin{align*}
    T_{e_i} \circ T_{e_i}^*(\delta_y) 
       &= \chi_{\bigcup_{\{j \mid v_j \in r(e_i)\}} I_j}(g(y))\,
          \chi_{I_i}(y)\,\delta_y \\
       &= \chi_{I_i}(y)\,\delta_y \\
       &= W_{v_i}(\delta_y).
\end{align*}
The second line follows from Lemma~\ref{lemaprincipalmarkov} when 
$y \in I_i$ (and thus $y \in \mathring{I_i}$, since $y \in R_g(x)$). 
If $y \notin I_i$, the equality is immediate. 
\end{proof}

\begin{proposition}\label{repMarkov}
    Let $X \subseteq \mathcal{G}^0$. 
    If for all $v_i \in X$ we have 
    $\mathring{I_i} \cap g^{-1}(E_J) = \emptyset$, 
    then the map $
    \nu_x: C^*(\mathcal{G}, X) \;\longrightarrow\; B(\ell^2(R_g(x))),
    $
    defined by
    $
    \nu_x(p_B) = W_B,$ $
    \nu_x(s_{e_i}) = T_{e_i},
    $
    is a $*$-homomorphism.
\end{proposition}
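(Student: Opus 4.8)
The plan is to invoke the universal property of the relative ultragraph algebra $C^*(\mathcal{G},X)$. By Definition~\ref{relative}, it suffices to exhibit partial isometries with orthogonal ranges $\{T_{e_i}\}_{i\in\mathbb{N}}$ and projections $\{W_B\}_{B\in\mathcal{E}}$ that satisfy relations (1)--(3) of Definition~\ref{def of C^*(mathcal{G})} for all vertices and edges, together with relation (4) for every $v\in X$; universality will then furnish the desired $*$-homomorphism $\nu_x$ sending $p_B\mapsto W_B$ and $s_{e_i}\mapsto T_{e_i}$.

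First I would note that relations (1)--(3), the orthogonality of the ranges, and the partial-isometry property of each $T_{e_i}$ have already been verified in the proof of Theorem~\ref{basic}. Crucially, that verification never refers to $X$: it concerns only the operators $W_B$ and $T_{e_i}$ intrinsically. Consequently these relations continue to hold for $C^*(\mathcal{G},X)$ without any change, and the entire remaining task is to check the Cuntz--Krieger relation (4) at each vertex of $X$.

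For relation (4) I would use the fact that in $\mathcal{G}=\mathcal{G}_g$ every vertex emits exactly one edge, so $s^{-1}(v_i)=\{e_i\}$ and relation (4) at $v_i$ collapses to the single identity $W_{v_i}=T_{e_i}\circ T_{e_i}^*$. The standing hypothesis---that $\mathring{I_i}\cap g^{-1}(E_J)=\emptyset$ for every $v_i\in X$---is exactly the hypothesis of Corollary~\ref{CK4}. Applying that corollary to each $v_i\in X$ gives
\[
W_{v_i}=T_{e_i}\circ T_{e_i}^*=\sum_{e\in\mathcal{G}^1:\,s(e)=v_i}T_e\circ T_e^*,
\]
which is precisely relation (4) at $v_i$. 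All defining relations of $C^*(\mathcal{G},X)$ are then in place, and the universal property delivers $\nu_x$.

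I do not expect a real obstacle here: the substantive analysis has already been carried out in Lemma~\ref{lemaprincipalmarkov} and Corollary~\ref{CK4}, and this proposition is little more than their assembly through the universal property. The only point I would handle carefully is the bookkeeping observation that relations (1)--(3) are genuinely independent of the choice of $X$, so that none of the Theorem~\ref{basic} computations need to be repeated; once that is recorded, the argument is immediate.
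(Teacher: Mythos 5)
Your proposal is correct and follows essentially the same route as the paper, whose proof is precisely "Theorem~\ref{basic} together with Corollary~\ref{CK4}": relations (1)--(3) come from Theorem~\ref{basic} independently of $X$, and relation (4) at each $v_i\in X$ reduces (since $s^{-1}(v_i)=\{e_i\}$) to the identity supplied by Corollary~\ref{CK4}. Your write-up merely makes explicit the bookkeeping that the paper leaves implicit.
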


\begin{proof}
This follows directly from Theorem~\ref{basic} together with 
Corollary~\ref{CK4}.
\end{proof}

To conclude this section, we show that every representation $\nu_x$ constructed above coincides with a representation induced by a relative branching system. 
Hence, the class of such representations is contained in the class of representations induced by relative branching systems.  

Recall that $g \in M(I)$ and $x \in E_g$ are fixed. 
Also recall that the ultragraph $\mathcal{G}_g = \mathcal{G}$ induced by $g$ is the ultragraph defined by the adjacency matrix $A_g$, namely
\[
\mathcal{G}^0 = \{v_i \mid i \in \mathbb{N} \}, 
\qquad 
\mathcal{G}^1 = \{e_i \mid i \in \mathbb{N} \},
\]
with
\[
s(e_i) = v_i,
\qquad 
r(e_i) = \{v_j \mid A_{ij} = 1\} 
        = \{v_j \mid \mathring{I_j} \subseteq g(\mathring{I_i})\}.
\]

For each $j \in \mathbb{N}$ and for all $A \in \mathcal{E}$, define the sets
\[
D_{v_j} = I_j \cap R_g(x), 
\qquad 
D_A = \bigcup_{\{j \mid v_j \in A\}} D_{v_j}, 
\qquad 
R_{e_i} = \bigcup_{\{j \mid v_j \in r(e_i)\}} g_i^{-1}(I_j \cap R_g(x)).
\]

By Lemma~\ref{interior}, we have 
$I_j \cap R_g(x) = \mathring{I_j} \cap R_g(x)$.  
Therefore these sets can also be written as
\[
D_{v_j} = \mathring{I_j} \cap R_g(x), 
\qquad 
D_A = \bigcup_{\{j \mid v_j \in A\}} D_{v_j}, 
\qquad 
R_{e_i} = \bigcup_{\{j \mid v_j \in r(e_i)\}} g_i^{-1}(\mathring{I_j} \cap R_g(x)).
\]

Thus, depending on the context, we may choose whichever formulation of the sets 
is more convenient. 
Observe that all these sets are contained in $R_g(x)$ and therefore they are 
measurable with respect to the counting measure.  

For each $i \in \mathbb{N}$ we also define the function
\[
f_{e_i} \colon 
D_{r(e_i)} 
   = \bigcup_{\{j \mid v_j \in r(e_i)\}} \bigl(I_j \cap R_g(x)\bigr) 
   \;\longrightarrow\; 
R_{e_i} 
   = \bigcup_{\{j \mid v_j \in r(e_i)\}} g_i^{-1}(I_j \cap R_g(x))
\]
by $
f_{e_i}(y) = g_i^{-1}(y).
$
This construction yields a branching system, as stated in the next result.

\begin{proposition}\label{markovmapsinducesbs}
    Let $X \subseteq \mathcal{G}^0$. 
    If for all $v_i \in X$ we have 
    $\mathring{I_i} \cap g^{-1}(E_J) = \emptyset$, 
    then $
    \{D_A, R_{e_i}, f_{e_i}\}_{A \in \mathcal{E},\, i \in \mathbb{N}} $
    is a $(\mathcal{G},X)$-relative branching system in $\ell^2(R_g(x))$.
\end{proposition}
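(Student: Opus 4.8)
The plan is to verify the five conditions of Definition~\ref{relativebs} for the family $\{D_A, R_{e_i}, f_{e_i}\}$ in the measure space $(R_g(x),\mu)$, where $\mu$ is the counting measure. The geometric facts driving the verification are that the interiors $\mathring{I_j}$ are pairwise disjoint (since $|I_i\cap I_j|\le 1$) and that each $g_i$ is injective on $\mathring{I_i}$; the only condition requiring the standing hypothesis $\mathring{I_i}\cap g^{-1}(E_J)=\emptyset$ is condition~(4).

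First I would dispatch the routine conditions. Because $R_{e_i}=\bigcup_{\{j\mid v_j\in r(e_i)\}} g_i^{-1}(\mathring{I_j}\cap R_g(x))\subseteq \mathring{I_i}\cap R_g(x)=D_{v_i}$, condition~(3) holds immediately, and the disjointness of the interiors $\mathring{I_i}$ gives $R_{e_i}\cap R_{e_k}=\emptyset$ for $i\ne k$, which is condition~(1). For condition~(2), the sets $D_{v_j}=\mathring{I_j}\cap R_g(x)$ are pairwise disjoint, so $A\mapsto D_A$ sends finite intersections to intersections and finite unions to unions, and $D_\emptyset=\emptyset$ is clear. For condition~(5), the map $f_{e_i}(y)=g_i^{-1}(y)$ is a well-defined bijection from $D_{r(e_i)}$ onto $R_{e_i}$ with inverse $z\mapsto g_i(z)$: for $y\in D_{r(e_i)}$ we have $y\in\mathring{I_j}$ with $\mathring{I_j}\subseteq g(\mathring{I_i})$, so $y$ admits a unique $g_i$-preimage in $\mathring{I_i}$. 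Since $\mu$ is the counting measure and $f_{e_i}$ is a bijection onto its image, the pushforwards $\mu\circ f_{e_i}$ and $\mu\circ f_{e_i}^{-1}$ coincide with $\mu$ on their respective domains; hence both are absolutely continuous with Radon--Nikodym derivatives identically equal to $1$.

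The heart of the proof is condition~(4). In $\mathcal{G}=\mathcal{G}_g$ each vertex emits a single edge, so $s^{-1}(v_i)=\{e_i\}$ and the condition reduces to $D_{v_i}=R_{e_i}$ for every $v_i\in X$. The inclusion $R_{e_i}\subseteq D_{v_i}$ is condition~(3), so it remains to prove $D_{v_i}\subseteq R_{e_i}$, and this is precisely where Lemma~\ref{lemaprincipalmarkov} enters. Given $y\in D_{v_i}=\mathring{I_i}\cap R_g(x)$, the hypothesis $\mathring{I_i}\cap g^{-1}(E_J)=\emptyset$ allows Lemma~\ref{lemaprincipalmarkov} to furnish an index $j$ with $g(y)\in I_j$ and $v_j\in r(e_i)$. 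As in the proof of that lemma, the iteration length $N$ witnessing $y\in R_g(x)$ satisfies $N\ge 2$, so $g(y)\in\operatorname{dom}(g)$ and $g^{\,N-1}(g(y))=g^{\tau(x)}(x)\in E_J$; thus $g(y)\in R_g(x)$, and by Lemma~\ref{interior} we obtain $g(y)\in\mathring{I_j}\cap R_g(x)$. Applying $g_i^{-1}$ gives $y=g_i^{-1}(g(y))\in g_i^{-1}(\mathring{I_j}\cap R_g(x))\subseteq R_{e_i}$, completing the missing inclusion.

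Having checked all five conditions, the family $\{D_A,R_{e_i},f_{e_i}\}$ is a $(\mathcal{G},X)$-relative branching system, as claimed. I expect condition~(4) to be the only genuine obstacle: the remaining four conditions are formal consequences of the disjointness of the interval interiors, the injectivity of the maps $g_i$, and the normalization afforded by the counting measure.
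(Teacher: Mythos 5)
Your proposal is correct and follows essentially the same route as the paper's proof: conditions (1)--(3) and (5) are handled by the disjointness of the interiors, the injectivity of each $g_i$, and the counting-measure normalization, while the key condition (4) reduces (since $s^{-1}(v_i)=\{e_i\}$) to the inclusion $D_{v_i}\subseteq R_{e_i}$, which you establish exactly as the paper does, via Lemma~\ref{lemaprincipalmarkov}, the $N\ge 2$ argument showing $g(y)\in R_g(x)$, and Lemma~\ref{interior}.
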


\begin{proof}
Let $A,B \in \mathcal{E}$. 
The equality $D_{A \cup B} = D_{A} \cup D_{B}$ is immediate. 
Moreover,
\[
D_{A} \cap D_{B} 
   = \bigcup_{\{j \mid v_j \in A\}}
       \Big( \bigcup_{\{i \mid v_i \in B\}} D_{v_j} \cap D_{v_i} \Big) 
   = \bigcup_{\{j \mid v_j \in A \cap B\}} D_{v_j} 
   = D_{A \cap B},
\]
since $D_{v_j} \cap D_{v_i} 
   = \mathring{I_i} \cap \mathring{I_j} \cap R_g(x)$, 
which equals $D_{v_j}$ if $i=j$ and is empty otherwise.  

Now let $x \in R_{e_i}$.  
Then $x \in I_i$. 
If $x \in \partial I_i$, then $g_i(x) \in \Gamma$.  
But since $x \in R_{e_i}$ we also have $g(x) \in R_g(x)$, 
and by Lemma~\ref{interior} it follows that $\Gamma \cap R_g(x) = \emptyset$, 
a contradiction.  
Thus $x \in \mathring{I_i}$, and therefore $R_{e_i} \subseteq \mathring{I_i}$ for all $i \in \mathbb{N}$. 
Consequently,
\[
R_{e_i} \cap R_{e_j} \subseteq \mathring{I_i} \cap \mathring{I_j} = \emptyset
\quad \text{for } i \neq j.
\]

To see that $R_{e_i} \subseteq D_{s(e_i)}$, observe first that 
\[
D_{s(e_i)} = D_{v_i} = I_i \cap R_g(x).
\] 
If $y \in R_{e_i}$, then clearly $y \in I_i$. 
Moreover, there exists $j \in \mathbb{N}$ with $v_j \in r(e_i)$ and 
$g(y) \in R_g(x) \cap I_j$. 
In particular, $y \in R_g(x)$. 
Hence $y \in I_i \cap R_g(x) = D_{v_i}$.  

We now prove that 
\[
D_{v_i} = \bigcup_{\{e \mid s(e) = v_i\}} R_e = R_{e_i}
\quad \text{for all } i \in \mathbb{N} \text{ with } v_i \in X.
\]
From the previous paragraph we know that $R_{e_i} \subseteq D_{v_i}$.  
Thus it remains to show $D_{v_i} \subseteq R_{e_i}$.  
Let $y \in D_{v_i}$.  
By our hypothesis, Lemma~\ref{lemaprincipalmarkov} guarantees that there exists 
$j \in \mathbb{N}$ such that $v_j \in r(e_i)$ and $g(y) \in I_j$.  

We claim that $g(y) \in R_g(x)$.  
Indeed, since $y \in D_{v_i}$, we know $y \in R_g(x)$, so there exists $N \in \mathbb{N}$ with
$
g^N(y) = g^{\tau(x)}(x) \in E_J.
$
If $N=1$, then $g(y) \in E_J$.  
But this contradicts the fact that $g(y) \in I_j$.  
Therefore $N \geq 2$, and consequently $g(y) \in R_g(x)$.  
Thus $
y \in g_i^{-1}(I_j \cap R_g(x)) \subseteq R_{e_i}$.

Finally, since we are working with the counting measure, to prove the last axiom 
of a relative branching system it suffices to show that each $f_{e_i}$ is a bijection.  
This follows directly from the definition, since $g_{|\mathring{I_i}}$ is injective.  
\end{proof}

We denote by $\pi_x$ the representation induced by the 
$(\mathcal{G},X)$-relative branching system above, via 
Proposition~\ref{relativerepinducedbybranchingsystems}. 
It follows that 
\[
\pi_x: C^*(\mathcal{G},X) \;\longrightarrow\; B(\ell^2(R_g(x)))
\]
is given by
\[
\pi_x(p_{A})(F) 
   = \chi_{\bigcup_{\{i \mid v_i \in A\}} (I_i \cap R_g(x))}\cdot F, 
   \qquad 
   \pi_x(s_{e_i})(F) 
   = \chi_{\bigcup_{\{j \mid v_j \in r(e_i)\}} g_i^{-1}(I_j \cap R_g(x))}\cdot (F \circ g_i),
\]
for all $F \in \ell^2(R_g(x))$.

\begin{theorem}\label{markovandbsareequal}
     Let $X \subseteq \mathcal{G}^0$. 
     If for all $v_i \in X$ we have $\mathring{I_i} \cap g^{-1}(E_J) = \emptyset$, 
     then the representations $\nu_x$ of Propositon~\ref{repMarkov} 
     and $\pi_x$ of the theorem above coincide.
\end{theorem}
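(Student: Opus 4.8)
The plan is to exploit that both $\nu_x$ and $\pi_x$ are $*$-homomorphisms out of the universal $C^*$-algebra $C^*(\mathcal{G},X)$ (the former by Proposition~\ref{repMarkov}, the latter by Proposition~\ref{relativerepinducedbybranchingsystems} applied to the relative branching system of Proposition~\ref{markovmapsinducesbs}). Hence it suffices to check that they agree on the generating set $\{p_A : A \in \mathcal{E}\} \cup \{s_{e_i} : i \in \mathbb{N}\}$. Since both representations act on $\ell^2(R_g(x))$, which carries the orthonormal basis $\{\delta_y : y \in R_g(x)\}$, I would verify each generator equality by evaluating the two operators on an arbitrary basis vector $\delta_y$ and comparing.

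For the projections, I compute $\nu_x(p_A)\delta_y = W_A\delta_y = \chi_{\bigcup_{\{j \mid v_j \in A\}} I_j}(y)\,\delta_y$ directly from the definition of $W_A$, while the branching-system formula of Proposition~\ref{relativerepinducedbybranchingsystems} gives $\pi_x(p_A)\delta_y = \chi_{\bigcup_{\{j \mid v_j \in A\}} (I_j \cap R_g(x))}(y)\,\delta_y$. The only point to observe is that $y \in R_g(x)$, so that $y$ lies in $\bigcup_{\{j \mid v_j \in A\}} I_j$ if and only if it lies in $\bigcup_{\{j \mid v_j \in A\}} (I_j \cap R_g(x))$; the two scalar coefficients therefore coincide and $\nu_x(p_A) = \pi_x(p_A)$.

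For the partial isometries, I would unwind the branching-system representation with the counting measure, so that the Radon--Nikodym factors $\Phi_{f_{e_i}^{-1}}^{1/2}$ are identically $1$ and $f_{e_i}^{-1} = g_i$ on $R_{e_i}$. Evaluating on $\delta_y$ gives $\pi_x(s_{e_i})\delta_y = \chi_{R_{e_i}}\cdot(\delta_y \circ g_i)$, and since $g_i$ is injective on $\mathring{I_i} \supseteq R_{e_i}$ this equals $\delta_{g_i^{-1}(y)}$ when $y \in D_{r(e_i)} = \bigcup_{\{j \mid v_j \in r(e_i)\}} (I_j \cap R_g(x))$ and $0$ otherwise. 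On the other hand $\nu_x(s_{e_i})\delta_y = T_{e_i}\delta_y = \chi_{\bigcup_{\{j \mid v_j \in r(e_i)\}} I_j}(y)\,\delta_{g_i^{-1}(y)}$; using once more that $y \in R_g(x)$, the indicator set $\bigcup_{\{j \mid v_j \in r(e_i)\}} I_j$ may be replaced by $D_{r(e_i)}$, so the two outputs agree.

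The argument is essentially bookkeeping, and I do not expect a genuine obstacle; the points that require care are \emph{(i)} interpreting $\phi \circ f_{e_i}^{-1}$ correctly on a basis vector, namely that precomposition with $g_i$ sends $\delta_y$ to $\delta_{g_i^{-1}(y)}$ and simultaneously restricts support to $R_{e_i}$, and \emph{(ii)} matching the two families of characteristic functions---those over the full intervals $I_j$ used in the definition of $W_B,T_{e_i}$ against those over $I_j \cap R_g(x)$ used in the branching system. Both are resolved by the single observation that all vectors live on $R_g(x)$, together with Lemma~\ref{interior}, which guarantees that $R_g(x)$ avoids the boundaries $\Gamma$ and hence that $g_i^{-1}(y)$ is a well-defined element of $R_g(x)$ whenever it occurs.
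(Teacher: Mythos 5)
Your proposal is correct and follows essentially the same route as the paper's proof: both reduce the statement to agreement on the generators $\{p_A\}\cup\{s_{e_i}\}$ and verify this by direct evaluation on the orthonormal basis $\{\delta_y : y\in R_g(x)\}$, using that every basis point lies in $R_g(x)$ (so $I_j$ may be replaced by $I_j\cap R_g(x)$ in all indicator functions) and that the counting measure makes the Radon--Nikodym factors trivial. The only cosmetic difference is that the paper compares matrix entries $\nu_x(\cdot)(\delta_y)(\tilde{y})$ case by case, whereas you compare the image vectors directly.
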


\begin{proof}
It is enough to prove that $\pi_x = \nu_x$ on all generators of $C^*(\mathcal{G},X)$.  
Let $A \in \mathcal{E}$ and $i \in \mathbb{N}$. 
For $y,\tilde{y} \in R_g(x)$, a straightforward computation shows that
\[
\nu_x(p_A)(\delta_y)(\tilde{y}) =
\begin{cases}
    0 & \text{if } y \neq \tilde{y}, \\[4pt]
    0 & \text{if } y = \tilde{y} \text{ and } y \notin \bigcup_{\{j \mid v_j \in A\}} I_j, \\[4pt]
    1 & \text{if } y = \tilde{y} \text{ and } y \in \bigcup_{\{j \mid v_j \in A\}} I_j,
\end{cases}
= \pi_x(p_A)(\delta_y)(\tilde{y}).
\]

Similarly,
\[
\nu_x(s_{e_i})(\delta_y)(\tilde{y}) =
\begin{cases}
    0 & \text{if } g_i^{-1}(y) \neq \tilde{y}, \\[4pt]
    0 & \text{if } g_i^{-1}(y) = \tilde{y} 
         \text{ and } y \notin \bigcup_{\{j \mid v_j \in r(e_i)\}} I_j, \\[4pt]
    1 & \text{if } g_i^{-1}(y) = \tilde{y} 
         \text{ and } y \in \bigcup_{\{j \mid v_j \in r(e_i)\}} I_j,
\end{cases}
= \pi_x(s_{e_i})(\delta_y)(\tilde{y}).
\]

Therefore $\pi_x = \nu_x$ on all generators of $C^*(\mathcal{G},X)$, 
and the result follows.
\end{proof}


\section{Applications and Examples}

This section aims to present applications and concrete examples showing how
Markov maps, branching systems, and injectivity theorems can be applied in a
single setting.

Let $g \in M(I)$, let $x \in E_g$, and let $\mathcal{G}_g=\mathcal{G}$ be the
ultragraph induced by $g$. Fix $X \subseteq \mathcal{G}^0=\{v_1,v_2,\ldots\}$
such that, for every $v_i \in X$, we have
$\mathring{I_i}\cap g^{-1}(E_J)=\emptyset$.
By Theorem~\ref{markovandbsareequal}, the representation $\nu_x$ induced by the
Markov map coincides with the representation $\pi_x$ associated to the
$(\mathcal{G},X)$-relative branching system on $\ell^2(R_g(x))$ given by
\[
D_{v_i}=I_i\cap R_g(x),\qquad
R_{e_i}=\bigcup_{\{j\,:\, v_j\in r(e_i)\}} g_i^{-1}\big(I_j\cap R_g(x)\big),
\qquad
f_{e_i}=g_i^{-1}.
\]
As usual, write $Y=\mathrm{Reg}(\mathcal{G})\setminus X$; in our context we will
use $Y=\mathcal{G}^0\setminus X$ (since $\mathcal{G}$ has no sinks).

As the reader might expect, the injectivity theorems proved in the previous
sections apply to the representation $\nu_x=\pi_x$. For clarity, we record one
useful criterion and then apply it in the examples that follow.

\begin{theorem}\label{injectivityformarkovreps}
With the notation above, suppose that $r(e)\cap Y$ is finite for every
$e\in\mathcal{G}^1$. Then $\nu_x$ is injective if and only if:
\begin{enumerate}
    \item For every $i\in\mathbb{N}$ with $v_i\in X$,
          \,$I_i\cap R_g(x)\neq\emptyset$.
    \item For every $i\in\mathbb{N}$ with $v_i\in Y$ there exist
          $y_i\in I_i$ and $j\in\mathbb{N}$ with $v_j\in r(e_i)$ such that
          $g(y_i)\in I_j\cap R_g(x)$.
    \item For every $i\in\mathbb{N}$ with $v_i\in Y$,
          \,$I_i\cap R_g(x)\cap g^{-1}(E_J)\neq\emptyset$.
    \item For any cycle $\alpha=(e_{j_1},\ldots,e_{j_N})$ without exits such
          that $r(e_{j_i})\notin Y$ for all $i=1,\ldots,N$, and for any finite
          subset $\mathcal{F}\subseteq\mathbb{N}$, there exists
          $x\in D_{s(\alpha)}$ with
          \[
             \big(g_{j_1}^{-1}\circ\cdots\circ g_{j_N}^{-1}\big)^{\circ n}(x)\neq x
             \qquad \forall\, n\in\mathcal{F}.
          \]
\end{enumerate}
\end{theorem}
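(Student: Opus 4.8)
The plan is to reduce the entire statement to the branching-system criterion already established in Theorem~\ref{injectivitybscountingmeasure}. By Proposition~\ref{markovmapsinducesbs}, the data $\{D_A,R_{e_i},f_{e_i}\}$ constitute a $(\mathcal{G},X)$-relative branching system on $\ell^2(R_g(x))$ with respect to the counting measure, and by Theorem~\ref{markovandbsareequal} the Markov representation $\nu_x$ coincides with the representation $\pi_x$ that this system induces. Hence $\nu_x$ is injective if and only if the four conditions (a)--(d) of Theorem~\ref{injectivitybscountingmeasure} hold. The remaining work is therefore a \emph{translation}: I would rewrite each of (a)--(d) in terms of the concrete sets $D_{v_i}=I_i\cap R_g(x)$ and $R_{e_i}=\bigcup_{\{j\,:\,v_j\in r(e_i)\}}g_i^{-1}(I_j\cap R_g(x))$ and the maps $f_{e_i}=g_i^{-1}$, and check that they become exactly (1)--(4).

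First I would dispatch the three routine correspondences. Condition (a), that $D_{v_i}\neq\emptyset$ for every $v_i\notin Y$ (equivalently $v_i\in X$), is literally condition (1). Condition (b), that $R_{e_i}\neq\emptyset$ whenever $s(e_i)=v_i\in Y$, unwinds---using that in $\mathcal{G}=\mathcal{G}_g$ each vertex $v_i$ emits the single edge $e_i$, so $\bigcup_{\{e\,:\,s(e)=v_i\}}R_e=R_{e_i}$---to the existence of some $j$ with $v_j\in r(e_i)$ and some $y_i\in I_i$ with $g(y_i)\in I_j\cap R_g(x)$, which is precisely condition (2). Condition (d) matches condition (4) once one identifies the iterated branching map $f_{n\alpha}$ with $(g_{j_1}^{-1}\circ\cdots\circ g_{j_N}^{-1})^{\circ n}$, which follows from $f_{e_i}=g_i^{-1}$ together with the composition convention $f_\alpha=f_{\alpha_1}\circ\cdots\circ f_{\alpha_N}$; here I would also invoke Lemma~\ref{lemma relative condition L}(1) to match ``cycle without exits in $\mathcal{G}_X$'' with ``cycle without exits in $\mathcal{G}$ having $r(\alpha_i)\notin Y$ for all $i$''.

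The main work---and the step I expect to be the genuine obstacle---is showing that condition (c), namely $D_{v_i}\setminus R_{e_i}\neq\emptyset$ for all $v_i\in Y$, is equivalent to condition (3), that $I_i\cap R_g(x)\cap g^{-1}(E_J)\neq\emptyset$. The forward implication (3)$\Rightarrow$(c) is immediate: if $y\in I_i\cap R_g(x)$ with $g(y)\in E_J$, then $g(y)\notin\operatorname{dom}(g)$, so $g(y)\notin I_j\cap R_g(x)$ for every $j$, whence $y\in D_{v_i}\setminus R_{e_i}$. For the converse I would take $y\in D_{v_i}\setminus R_{e_i}$ and use $y\in R_g(x)$ to pick $N$ with $g^N(y)=g^{\tau(x)}(x)\in E_J$. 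If $N=1$ then $g(y)\in E_J$ and (3) holds, so I must rule out $N\ge 2$. In that case $g(y)\in R_g(x)\cap\operatorname{dom}(g)$, so by Lemma~\ref{interior} $g(y)\in\mathring{I_j}$ for some $j$; since $y\in\mathring{I_i}$ (again Lemma~\ref{interior}) this gives $g(y)\in\mathring{I_j}\cap g(\mathring{I_i})$, and Remark~\ref{emptyorall} together with Lemma~\ref{interior}(2) forces $\mathring{I_j}\subseteq g(\mathring{I_i})$, i.e.\ $v_j\in r(e_i)$. But then $g(y)\in I_j\cap R_g(x)$ with $v_j\in r(e_i)$ contradicts $y\notin R_{e_i}$, so $N=1$ and (3) holds. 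Assembling these four equivalences completes the argument.
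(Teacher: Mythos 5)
Your proposal is correct and follows essentially the same route as the paper's proof: reduce to Theorem~\ref{injectivitybscountingmeasure} via Theorem~\ref{markovandbsareequal}, dispose of (1), (2), (4) as direct translations, and establish the key equivalence (3)$\Leftrightarrow$(c) by the same two-step argument (the forward direction from $E_J\cap\operatorname{dom}(g)=\emptyset$, the converse by ruling out $N\ge 2$ via Lemma~\ref{interior} and Remark~\ref{emptyorall}). The only cosmetic difference is your appeal to Lemma~\ref{lemma relative condition L}(1) in matching (4) with (d), which is unnecessary since (d) is already phrased for cycles in $\mathcal{G}$.
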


\begin{proof}
We prove that each of the hypotheses \((1)\)--\((4)\) is equivalent to one of
the hypotheses of Theorem~\ref{injectivitybscountingmeasure}. It is
straightforward that \((1)\) is equivalent to \((a)\) and \((2)\) is equivalent
to \((b)\) of Theorem~\ref{injectivitybscountingmeasure}. Since, in the
definition of the relative branching system, \(f_{e_i}=g_i^{-1}\), it follows
that \((4)\) is equivalent to \((d)\) of the same theorem.

It remains to show that \((3)\) is equivalent to the condition
\[
\Bigl(D_{v_i}\setminus \bigcup_{\{e\,:\,s(e)=v_i\}} R_e\Bigr)
= D_{v_i}\setminus R_{e_i}\neq \emptyset
\quad\text{for all } i\in\mathbb{N} \text{ with } v_i\in Y.
\]
First, fix \(i\in\mathbb{N}\) with \(v_i\in Y\) and suppose that
\(I_i\cap R_g(x)\cap g^{-1}(E_J)\neq\emptyset\). Then there exists
\(y_i\in I_i\cap R_g(x)\cap g^{-1}(E_J)\). In particular, \(y_i\in D_{v_i}\)
and \(g_i(y_i)\in E_J\). Therefore
\[
y_i\notin R_{e_i}
=\bigcup_{\{j\,:\, v_j\in r(e_i)\}} g_i^{-1}\bigl(R_g(x)\cap I_j\bigr),
\]
for otherwise \(g_i(y_i)\in R_g(x)\cap I_j\subseteq I_j\) for some
\(j\) with \(v_j\in r(e_i)\), which contradicts \(g_i(y_i)\in E_J\).
Hence \(D_{v_i}\setminus R_{e_i}\neq\emptyset\).

Conversely, suppose \(D_{v_i}\setminus R_{e_i}\neq\emptyset\) and take
\(y_i\in D_{v_i}=I_i\cap R_g(x)\) with
\[
y_i\notin \bigcup_{\{j\,:\, v_j\in r(e_i)\}} g_i^{-1}\bigl(R_g(x)\cap I_j\bigr).
\]
Choose \(N\in\mathbb{N}\) such that \(g^N(y_i)=g^{\tau(x)}(x)\).
If \(N=1\), then \(g(y_i)=g^{\tau(x)}(x)\in E_J\), and we are done.

If \(N\ge 2\), we obtain a contradiction as follows: from \(N\ge 2\) we have
\(g(y_i)\in R_g(x)\) and \(g(y_i)\in \bigcup_{j\in\mathbb{N}} I_j\). By
Lemma~\ref{interior}, there exists \(j_0\in\mathbb{N}\) such that
\(g(y_i)\in \mathring{I}_{j_0}\). By Remark~\ref{emptyorall} this implies
\(g(I_i)\cap \mathring{I}_{j_0}=\mathring{I}_{j_0}\), hence
\(\mathring{I}_{j_0}\subseteq g(I_i)\). By Lemma~\ref{interior},
\(\mathring{I}_{j_0}\subseteq g(\mathring{I}_i)\), and consequently
\(v_{j_0}\in r(e_i)\). Therefore
\[
y_i \in g_i^{-1}\bigl(I_{j_0}\cap R_g(x)\bigr)
\subseteq \bigcup_{\{j\,:\, v_j\in r(e_i)\}} g_i^{-1}\bigl(I_j\cap R_g(x)\bigr)
= R_{e_i},
\]
contradicting the choice of \(y_i\). Thus \(N\) cannot be greater or equal to 2, and the only
possibility is \(N=1\), which yields \(g(y_i)\in E_J\). Hence
\(I_i\cap R_g(x)\cap g^{-1}(E_J)\neq\emptyset\).

We have shown that \((3)\) is equivalent to
\(D_{v_i}\setminus R_{e_i}\neq\emptyset\) for all \(i\) with \(v_i\in Y\).
Combining this with the equivalences for \((1)\), \((2)\), and \((4)\), the
theorem follows.
\end{proof}

\begin{remark}
If $\mathcal{G}$ has no cycles without exits, or if $\mathcal{G}$ satisfies the relative Condition~(L), then item~(4) of the above theorem is vacuously satisfied.
\end{remark}

\begin{example}
Consider $I=[0,\infty)$. For each $n\in\mathbb{N}$, set
\[
I_n:=[2n-2,\,2n-1]\quad(n\ge 1),
\]
so that $E_n=(2n-1,\,2n)$. Define
\[
g(x)=
\begin{cases}
x-2, & x\in I_n \text{ with } n\ge 2,\\
3x,  & x\in I_1.
\end{cases}
\]
Note that if $n\ge 2$ then $g(I_n)=I_{n-1}$.

\begin{center}
\begin{tikzpicture}
    \draw[ultra thick,-] (-1,0) -- (12,0);
    \draw[thick] (1,-0.25) -- (1,0.25);
    \draw[thick] (3,-0.25) -- (3,0.25);
    \draw[thick] (5,-0.25) -- (5,0.25);
    \draw[thick] (7,-0.25) -- (7,0.25);
    \draw[thick] (9,-0.25) -- (9,0.25);
    \draw[thick] (11,-0.25) -- (11,0.25);
    \node[below] at (-1,-0.25) {$0$};
    \node[below] at (1,-0.25) {$1$};
    \node[below] at (3,-0.25) {$2$};
    \node[below] at (5,-0.25) {$3$};
    \node[below] at (7,-0.25) {$4$};
    \node[below] at (9,-0.25) {$5$};
    \node[below] at (11,-0.25) {$6$};
    \node[below] at (0,0) {$I_1$};
    \node[above] at (2,0.5) {$E_1$};
    \node[below] at (4,0) {$I_2$};
    \node[above] at (6,0.5) {$E_2$};
    \node[below] at (8,0) {$I_3$};
    \node[above] at (10,0) {$E_3$};
    \node[above] at (12,0) {$\dots$};
    \node[below] at (12,0) {$\dots$};
\end{tikzpicture}
\end{center}

We claim that $g\in M(I)$. Condition~(1) of Definition~\ref{Markovclass} is
clearly satisfied by the choice of the closed intervals $(I_n)_{n\in\mathbb{N}}$.
For condition~(2), note that $g|_{\mathring{I_i}}$ is injective for every $i$;
moreover,
\[
g(\partial I_1)=g(\{0,1\})=\{0,3\}\subseteq \Gamma
\quad\text{and}\quad
g(\partial I_n)=\partial I_{n-1}\subseteq \Gamma\ \ (n\ge 2),
\]
so $g(\Gamma)\subseteq \Gamma$, where $\Gamma=\bigcup_{n\in\mathbb{N}}\partial I_n$.
For condition~(3), if $n\ge 2$ we have $g(I_n)=I_{n-1}$, hence
$g(I_n)\cap\big(\bigcup_{m} I_m\big)=I_{n-1}$ is a union of intervals of the
form $I_m$. For $n=1$, $g(I_1)=[0,3]=I_1\cup E_1\cup I_2$, so
\[
g(I_1)\cap\bigcup_{m} I_m = I_1\cup I_2,
\]
again a union of intervals of the form $I_m$. Therefore $g$ satisfies all three
conditions of Definition~\ref{Markovclass}, and hence $g\in M(I)$.

\begin{center}
\begin{tikzpicture}
  \draw[->] (-1,0) -- (7,0);
  \draw[->] (0,-1) -- (0,7);
  \foreach \i in {1,2,3} \node at ({2*\i - 0.5}, -0.6) {$E_{\i}$};
  \foreach \i in {1,2,3,4} \node at ({2*\i - 1.5}, -0.6) {$I_{\i}$};
  \foreach \i in {1,2,3} \node at (-0.6, {2*\i - 0.5}) {$E_{\i}$};
  \foreach \i in {1,2,3} \node at (-0.6, {2*\i - 1.5}) {$I_{\i}$};
  \foreach \x in {0,1,2,3,4,5,6,7} \draw[dashed] (\x,0) -- (\x,7);
  \foreach \y in {0,1,2,3,4,5,6} \draw[dashed] (0,\y) -- (7,\y);
  \draw[red,thick] (0,0) -- (1,3);
  \draw[red,thick] (2,0) -- (3,1);
  \draw[red,thick] (4,2) -- (5,3);
  \draw[red,thick] (6,4) -- (7,5);
\end{tikzpicture}
\end{center}

Recall that
\[
r(e_i)=\{\,v_j \mid \mathring{I_j}\subseteq g(\mathring{I_i})\,\},
\]
and by Lemma~\ref{interior}, if $\mathring{I_j}\subseteq g(I_i)$ then
$\mathring{I_j}\subseteq g(\mathring{I_i})$. In the present example,
$g(I_1)=I_1\cup E_2\cup I_2$ and $g(I_n)=I_{n-1}$ for $n\ge 2$, hence
$
r(e_1)=\{v_1,v_2\}$ and $r(e_n)=\{v_{n-1}\}\ \ (n\ge 2)$.
Thus the ultragraph $\mathcal{G}$ induced by $g$ has the following description:
\begin{center}
\begin{tikzpicture}[>=stealth,->,auto,node distance=2.2cm,thick]
  \usetikzlibrary{automata,positioning}
  \tikzset{every state/.style={minimum size=0pt}}
  \tikzset{every loop/.style={min distance=10mm,in=0,out=80,looseness=20}}

  \node[state,inner sep=0.5pt,draw=none] (A)                {$v_1$};
  \node[state,inner sep=0.5pt,draw=none,right=of A] (B)     {$v_2$};
  \node[state,inner sep=0.5pt,draw=none,right=of B] (C)     {$v_3$};
  \node[state,inner sep=0.5pt,draw=none,right=of C] (D)     {$v_4$};
  \node[state,inner sep=0.5pt,draw=none,right=of D] (E)     {$\cdots$};

  \path
    (B) edge node[above] {$e_2$} (A)
    (C) edge node[above] {$e_3$} (B)
    (D) edge node[above] {$e_4$} (C)
    (E) edge node[above] {$e_5$} (D)
    (A) edge [in=120,out=180,looseness=20] node {$e_1$} (A)
    (A) edge [in=270,out=270] node[below] {$e_1$} (B);
\end{tikzpicture}
\end{center}

Consider the point $x=\tfrac{1}{2}$. Note that $x\in E_g$ because
$g(x)=\tfrac{3}{2}\in E_1$. It follows that $\tau(x)=1$ and $J=J_x=1$.
Let $X=\{\,v_i \mid i\ge 2\,\}$. Since $g^{-1}(E_J)=\bigl(\tfrac{1}{3},\tfrac{2}{3}\bigr)$,
we have $\mathring{I_i}\cap g^{-1}(E_J)=\emptyset$ for all $i\ge 2$.
By Propositon~\ref{repMarkov} there is a representation
\[
\nu_{1/2}: C^*(\mathcal{G},X)\longrightarrow B\bigl(\ell^2(R_g(1/2))\bigr),
\]
and by Theorem~\ref{markovandbsareequal} we have $\nu_{1/2}=\pi_{1/2}$. We claim
that $\pi_{1/2}$ is injective. To verify this, we check the hypotheses of
Theorem~\ref{injectivityformarkovreps}:

\begin{enumerate}
    \item For each $i\ge 2$, let $y_i$ be the midpoint of $I_i$. Then
          $g(y_i)=y_i-2$, which is the midpoint of $I_{i-1}$, and iterating we
          obtain $g^{\,i-1}(y_i)=\tfrac{1}{2}$ and hence
          $g^{\,i}(y_i)=\tfrac{3}{2}=g^{\tau(x)}(x)$. Therefore
          $y_i\in I_i\cap R_g(x)$.
    \item For $i=1$, choose $y_1=\tfrac{1}{6}\in I_1$ and set $j=1$ (note that
          $v_1\in r(e_1)$). Then $g(\tfrac{1}{6})=\tfrac{1}{2}\in I_1\cap R_g(x)$.
    \item Observe that $\tfrac{1}{2}\in I_1\cap R_g(x)\cap g^{-1}(E_1)$.
    \item This item holds vacuously.
\end{enumerate}

\end{example}

\begin{example}
Consider the interval $I = [0,3)$ and define $I_1 = [0,1]$, $I_2 = [2, 2 + \frac{1}{2}]$, and $$I_n = [2 + \frac{1}{2} + \ldots \frac{1}{2^{n - 2}}, 2 + \frac{1}{2} + \ldots \frac{1}{2^{n - 1}}], \text{ for all $n \geq 3$.}$$ Define $g: \bigcup\limits_{n \in \mathbb{N}} I_n \to I$ as $g(x) = 2x + 1$ if $x \in (0,1)$; $g(0) = g(1) = 2$, and for $n \geq 2$ the map $g|_{I_n}$ is the crescent linear homeomorphism between $I_n$ and $I_{n - 1}$. Thus $g(I_1)=(1,3)$ and $g(I_n)=I_{n-1}$ for all $n\ge 2$. It follows that
$g\in M(I)$ and $E_n=\emptyset$ for all $n\ge 2$.
 
\begin{center}
\begin{tikzpicture}
  \draw[ultra thick,{[-) }] (-1,0) -- (11,0);
  \draw[thick] (3,-0.25) -- (3,0.25);
  \draw[thick] (7,-0.25) -- (7,0.25);
  \draw[thick] (9,-0.25) -- (9,0.25);
  \draw[thick] (10,-0.25) -- (10,0.25);
  \node[below] at (-1,-0.25) {$0$};
  \node[below] at (3,-0.25) {$1$};
  \node[below] at (7,-0.25) {$2$};
  \node[below] at (9,-0.5) {$2 + \tfrac{1}{2}$};
  \node[above] at (10,0.5) {$2 + \tfrac{1}{2} + \tfrac{1}{4}$};
  \node[below] at (11,-0.25) {$3$};
  \node[below] at (1,0) {$I_1$};
  \node[above] at (5,0.5) {$E_1$};
  \node[below] at (8,0) {$I_2$};
  \node[below] at (9.5,0) {$I_3$};
  \node[below] at (10.5,0) {$\dots$};
\end{tikzpicture}
\end{center}

\begin{center}
\begin{tikzpicture}
  \draw[->] (-1,0) -- (9,0);
  \draw[->] (0,-1) -- (0,7);
  \node[below] at (1,-0.5) {$I_1$};
  \node[below] at (3,-0.5) {$E_1$};
  \node[below] at (4.5,-0.5) {$I_2$};
  \node[below] at (5.25,-0.5) {$I_3$};
  \node[below] at (5.65,-0.5) {$I_4$};
  \node[below] at (6,-0.5) {$\dots$};
  \node[left] at (-0.5, 1) {$I_1$};
  \node[left] at (-0.5, 3) {$E_1$};
  \node[left] at (-0.5, 4.5) {$I_2$};
  \node[left] at (-0.5, 5.25) {$I_3$};
  \node[left] at (-0.5, 5.65) {$I_4$};
  \node[left] at (-0.5, 6) {$\vdots$};
  \foreach \y in {2,4} { \draw[dashed] (0,\y) -- (6.25,\y); }
  \draw[-] (0.25, 6.25) -- (-0.25, 6.25) node[left= 0.3cm,above = 0.3cm] {$3$};
  \draw[dashed] (0,5) -- (6.25,5);
  \draw[dashed] (0,5.5) -- (6.25,5.5);
  \draw[dashed] (0,5.75) -- (6.25,5.75);
  \draw[dashed] (6.25, 6.25) -- (6.25,0);
  \foreach \x in {2,4} { \draw[dashed] (\x,0) -- (\x,6.25); }
  \draw[-] (6.25,0.25) -- (6.25, -0.25) node[right = 0.3cm,above = 0.3cm] {$3$};
  \draw[dashed] (5,0) -- (5,6.25);
  \draw[dashed] (5.5,0) -- (5.5,6.25);
  \draw[dashed] (5.75,0) -- (5.75,6.25);
  \draw[dashed] (0, 6.25) -- (6.25, 6.25);
  \draw[red,thick] (4,0) -- (5,2);
  \draw[red,thick] (0,2) -- (2,6.25);
  \draw[red,thick] (5,4) -- (5.5,5);
  \draw[red,thick] (5.5,5) -- (5.75,5.5);
\end{tikzpicture}
\end{center}

The ultragraph $\mathcal{G}$ induced by $g$ is geometrically the following:
\begin{center}
\begin{tikzpicture}[>=stealth,->,auto,node distance=2.2cm,thick]
  \tikzset{every state/.style={minimum size=0pt}}
  \tikzset{every loop/.style={min distance=10mm,in=0,out=80,looseness=20}}

  \node[state,inner sep=0.5pt,draw=none] (A)                {$v_1$};
  \node[state,inner sep=0.5pt,draw=none,right=of A] (B)     {$v_2$};
  \node[state,inner sep=0.5pt,draw=none,right=of B] (C)     {$v_3$};
  \node[state,inner sep=0.5pt,draw=none,right=of C] (D)     {$v_4$};
  \node[state,inner sep=0.5pt,draw=none,right=of D] (E)     {$\cdots$};

  \path
    (B) edge node[above] {$e_2$} (A)
    (C) edge node[above] {$e_3$} (B)
    (D) edge node[above] {$e_4$} (C)
    (E) edge node[above] {$e_5$} (D);

  \path
    (A) edge[in=270,out=270]              node[below] {$e_1$} (B)
    (A) edge[in=260,out=280,looseness=1]  node[below] {$e_1$} (C)
    (A) edge[in=250,out=290,looseness=1]  node[below] {$e_1$} (D);
\end{tikzpicture}
\end{center}
Indeed, $
r(e_1)=\{v_j\mid j\ge 2\}=\{v_j\mid \mathring{I_j}\subseteq g(\mathring{I_1})\}$, and
$
r(e_n)=\{v_{n-1}\}=\{v_j\mid \mathring{I_j}\subseteq g(\mathring{I_n})\},\ n\ge 2.$

Consider the point $x=\tfrac{17}{8}$. Note that
$g^2(\tfrac{17}{8})=g(\tfrac14)=\tfrac32\in E_1$. Hence
$x\in E_g$, $\tau(x)=2$, and $J=J_x=1$. Let $X=\{v_i\mid i\ge 2\}$. Since
$g^{-1}(E_J)=g^{-1}(E_1)=(0,\tfrac12)$, we have
$g^{-1}(E_1)\cap \mathring{I_i}=\emptyset$ for all $i\ge 2$. By
Propositon~\ref{repMarkov} the representation
\[
\nu_{\frac{17}{8}}:\ C^*(\mathcal{G},X)\longrightarrow B\bigl(\ell^2(R_g(\tfrac{17}{8}))\bigr)
\]
is well defined. By Theorem~\ref{markovandbsareequal} we have
$\nu_{\frac{17}{8}}=\pi_{\frac{17}{8}}$. If the hypotheses of
Theorem~\ref{injectivityformarkovreps} are verified, then
$\nu_{\frac{17}{8}}$ is injective. We check them:

\begin{itemize}
  \item[(1)] For each $i\ge 2$, choose $y_i\in I_i$ such that
             $g^{\,i-1}(y_i)=\tfrac14$ (this exists and is unique since
             $g^{\,i-1}:I_i\to I_1$ is a linear homeomorphism). Then
             $g^{\,i}(y_i)=g(\tfrac14)=\tfrac32=g^{\tau(x)}(x)$, so
             $y_i\in I_i\cap R_g(x)$.
  \item[(2)] For $i=1$, take $y_1=\tfrac{9}{16}\in I_1$ and $j=2$ (note that
             $v_2\in r(e_1)$). Since $g(\tfrac{9}{16})=\tfrac{17}{8}\in I_2\cap R_g(x)$,
             the condition holds.
  \item[(3)] We have $\tfrac14\in I_1\cap R_g(\tfrac{17}{8})\cap g^{-1}(E_1)$.
  \item[(4)] Holds vacuously (there is no cycle without exits satisfying the
             stated condition with range in $Y$).
\end{itemize}
Therefore, the representation $\nu_{\frac{17}{8}}$ is injective.
\end{example}

\begin{remark}
In the example above, if we choose $X'$ to be any proper subset of $\{v_i : i \ge 2\}$, the representation
\[
\nu_{17/8} \colon C^*(\mathcal{G},X') \longrightarrow B\bigl(\ell^2(R_g(\tfrac{17}{8}))\bigr)
\]
still exists, since $g^{-1}(E_1)=(0,\tfrac{1}{2})$ and therefore $\mathring{I_i}\cap g^{-1}(E_1)=\varnothing$ for all $i\ge 2$. However, the conclusion changes: $\nu_{17/8}$ is \emph{not} injective. Indeed, if $j\ge 2$ is such that $v_j\notin X'$ (so $v_j\in Y:=\mathcal{G}^0\setminus X'$), then
\[
I_j \cap R_g(\tfrac{17}{8}) \cap g^{-1}(E_1) \;=\; \varnothing,
\]
because $I_j \cap g^{-1}(E_1)=\varnothing$ for all $j\ge 2$. The same reasoning applies, \emph{mutatis mutandis}, to the preceding example.
\end{remark}

We finish this work with an example where $Y=\mathrm{Reg}(\mathcal{G})\setminus X$ is an infinite set.

\begin{example}
Consider the interval $I=[0,\infty)$. Define $I_1=[0,1]$, $I_2=[2,3]$, and
$I_n=[\,n+1,\,n+2\,]$ for all $n\ge 3$.
\begin{center}
\begin{tikzpicture}
  \draw[ultra thick,-] (-1,0) -- (12,0);
  \draw[thick] (1,-0.25) -- (1,0.25);
  \draw[thick] (3,-0.25) -- (3,0.25);
  \draw[thick] (5,-0.25) -- (5,0.25);
  \draw[thick] (7,-0.25) -- (7,0.25);
  \draw[thick] (9,-0.25) -- (9,0.25);
  \draw[thick] (11,-0.25) -- (11,0.25);
  \node[below] at (-1,-0.25) {$0$};
  \node[below] at (1,-0.25) {$1$};
  \node[below] at (3,-0.25) {$2$};
  \node[below] at (5,-0.25) {$3$};
  \node[below] at (7,-0.25) {$4$};
  \node[below] at (9,-0.25) {$5$};
  \node[below] at (11,-0.25) {$6$};
  \node[below] at (0,0) {$I_1$};
  \node[above] at (2,0.5) {$E_1$};
  \node[below] at (4,0) {$I_2$};
  \node[above] at (6,0.5) {$E_2$};
  \node[below] at (8,0) {$I_3$};
  \node[below] at (10,0) {$I_4$};
  \node[below] at (12,0) {$\dots$};
\end{tikzpicture}
\end{center}

Note that $E_1=(1,2)$, $E_2=(3,4)$, and $E_n=\emptyset$ for all $n\ge 3$. We define $g$ as follows.

\begin{itemize}
\item For $x\in I_1$: if $x\in \mathring{I_1}$ set $g(x)=2x+1$, and if $x\in\{0,1\}$ define $g(x)=3$.
\begin{center}
\begin{tikzpicture}
  \draw[->] (-1,0) -- (5,0);
  \draw[->] (0,-1) -- (0,4);
  \node at (0.5,-0.5) {$I_1$};
  \node at (-0.5, 0.5) {$I_1$};
  \node at (-0.5, 1.5) {$E_1$};
  \node at (-0.5, 2.5) {$I_2$};
  \node at (-0.5, 3.5) {$E_2$};
  \filldraw[red] (1,3) circle (1pt);
  \filldraw[white,thick] (0,1) circle (1pt);
  \foreach \x in {1,2,3} \draw[dashed] (\x,0) -- (\x,4);
  \foreach \y in {1,2,3} \draw[dashed] (0,\y) -- (4,\y);
  \draw[red,thick] (0,1) -- (1,3);
\end{tikzpicture}
\end{center}

\item If $n\ge 2$ is even, divide $I_n=[\,n+1,\,n+2\,]$ into three equal subintervals
\[
A_1^n=[\,n+1,\,n+1+\tfrac{1}{3}\,],\quad
A_2^n=(\,n+1+\tfrac{1}{3},\,n+1+\tfrac{2}{3}\,),\quad
A_3^n=[\,n+1+\tfrac{2}{3},\,n+2\,].
\]
Define $g$ on $A_1^n$ as the increasing linear homeomorphism onto $I_{n-1}$,
on $A_3^n$ as the increasing linear homeomorphism onto $I_{n+1}$, and on
$A_2^n$ as the increasing linear homeomorphism onto $E_2$.
\begin{center}
\begin{tikzpicture}
  \draw[->] (-1,0) -- (8,0);
  \draw[->] (0,-1) -- (0,7);
  \draw[|-|] (4,-1) -- (7,-1);
  \node at (5.5,-1.5) {$I_n$};
  \node at (4.5,-0.5) {$A_1^n$};
  \node at (5.5,-0.5) {$A_2^n$};
  \node at (6.5,-0.5) {$A_3^n$};
  \node at (-0.5,3.5) {$I_{n-1}$};
  \node at (-0.5,4.5) {$I_n$};
  \node at (-0.5,5.5) {$I_{n+1}$};
  \node at (3,-0.5) {$\dots$};
  \node at (-0.5,1.5) {$E_2$};
  \node at (-0.5,2.5) {$\vdots$};
  \node at (-0.5,0.5) {$\vdots$};
  \foreach \x in {4,5,6,7} \draw[dashed] (\x,0) -- (\x,7);
  \foreach \y in {1,2,3,4,5,6} \draw[dashed] (0,\y) -- (8,\y);
  \draw[red,thick] (4,3) -- (5,4);
  \draw[red,thick] (5,1) -- (6,2);
  \draw[red,thick] (6,5) -- (7,6);
\end{tikzpicture}
\end{center}

\item If $n\ge 3$ is odd, divide $I_n=[\,n+1,\,n+2\,]$ similarly into
$A_1^n$, $A_2^n$, $A_3^n$. Define $g$ on $A_1^n$ as the increasing linear
homeomorphism onto $I_{n-1}$, on $A_3^n$ as the increasing linear homeomorphism
onto $I_{n+1}$, and on $A_2^n$ as the increasing linear homeomorphism onto $E_1$.
\begin{center}
\begin{tikzpicture}
  \draw[->] (-1,0) -- (8,0);
  \draw[->] (0,-1) -- (0,7);
  \draw[|-|] (4,-1) -- (7,-1);
  \node at (5.5,-1.5) {$I_n$};
  \node at (4.5,-0.5) {$A_1^n$};
  \node at (5.5,-0.5) {$A_2^n$};
  \node at (6.5,-0.5) {$A_3^n$};
  \node at (-0.5,3.5) {$I_{n-1}$};
  \node at (-0.5,4.5) {$I_n$};
  \node at (-0.5,5.5) {$I_{n+1}$};
  \node at (3,-0.5) {$\dots$};
  \node at (-0.5,1.5) {$E_1$};
  \node at (-0.5,2.5) {$\vdots$};
  \node at (-0.5,0.5) {$\vdots$};
  \foreach \x in {4,5,6,7} \draw[dashed] (\x,0) -- (\x,7);
  \foreach \y in {1,2,3,4,5,6} \draw[dashed] (0,\y) -- (8,\y);
  \draw[red,thick] (4,3) -- (5,4);
  \draw[red,thick] (5,1) -- (6,2);
  \draw[red,thick] (6,5) -- (7,6);
\end{tikzpicture}
\end{center}
\end{itemize}

The construction for even and odd indices is the same except for the target
escape set ($E_2$ vs.\ $E_1$). Condition~(1) in the definition of Markov maps
holds immediately. Conditions~(2) and~(3) also follow from the definitions,
since on each subinterval we use increasing linear homeomorphisms and split
each $I_n$ into three parts. In particular,
\[
g(I_1)=E_1\cup I_2,\qquad
g(I_n)=I_{n-1}\cup I_{n+1}\cup E_1\ \ \text{if $n$ is odd,}
\]
\[
g(I_n)=I_{n-1}\cup I_{n+1}\cup E_2\ \ \text{if $n\ge 2$ is even.}
\]

From the behavior of $g$ on the interiors, it follows that
$r(e_1)=\{v_2\}$ and $r(e_n)=\{v_{n-1},v_{n+1}\}$ for all $n\ge 2$.
Hence, the ultragraph $\mathcal{G}$ induced by $g$ is:
\begin{center}
\begin{tikzpicture}[->,auto,node distance=2cm,thick]
  \tikzset{every state/.style={minimum size=0pt}}
  \tikzset{every loop/.style={min distance=10mm,in=0,out=80,looseness=20}}
  \node[state,inner sep=0.5pt,draw=none] (A) {$v_1$};
  \node[state,inner sep=0.5pt,draw=none,right of=A] (B) {$v_2$};
  \node[state,inner sep=0.5pt,draw=none,right of=B] (C) {$v_3$};
  \node[state,inner sep=0.5pt,draw=none,right of=C] (D) {$\;\;v_4\;\;\dots$};
  \path
    (B) edge [in=60,out=120] node[above] {$e_2$} (A)
    (A) edge [in=240,out=300] node[below] {$e_1$} (B)
    (C) edge [in=60,out=120] node[above] {$e_3$} (B)
    (B) edge [in=240,out=300] node[below] {$e_2$} (C)
    (D) edge [in=60,out=120] node[above] {$e_4$} (C)
    (C) edge [in=240,out=300] node[below] {$e_3$} (D);
\end{tikzpicture}
\end{center}

Now take $x=\tfrac{5}{2}\in I_2$. Then $g(x)=\tfrac{7}{2}\in E_2$, so
$x\in E_g$, $\tau(x)=1$, and $J=J_x=2$. Let $X=\{\,v_i \mid i \text{ is odd}\,\}$.
Note that $I_i\cap g^{-1}(E_2)=\emptyset$ for all odd $i$, because
$g(I_i)=I_{i-1}\cup I_{i+1}\cup E_1$ when $i$ is odd. Therefore the
representation $\nu_x=\pi_x\colon C^*(\mathcal{G},X)\to B(\ell^2(R_g(x)))$
is well defined. Using Theorem~\ref{injectivityformarkovreps}, we show that
$\nu_x$ is injective:

\begin{enumerate}
  \item We must show $I_i\cap R_g(x)\neq\emptyset$ for every odd $i$.
        For $i=1$, since $g(I_1)=E_1\cup I_2$ and $\tfrac{5}{2}\in I_2$,
        pick $y_1\in I_1$ with $g(y_1)=\tfrac{5}{2}$; then
        $g^2(y_1)=g^{\tau(x)}(x)=\tfrac{7}{2}$.  
        For $i=3$, note $g(I_3)=I_2\cup I_4\cup E_1$; choose
        $y_2\in I_2$ with $g(y_2)=\tfrac{7}{2}$, and then $y_3\in I_3$
        with $g(y_3)=y_2$. Hence $g^2(y_3)=\tfrac{7}{2}$, so
        $y_3\in I_3\cap R_g(x)$.  
        For $i=5$, use $g(I_5)=I_4\cup I_6\cup E_1$ and
        $g(I_4)=I_3\cup I_5\cup E_2$ to choose $y_4\in I_4$ with
        $g(y_4)=\tfrac{7}{2}$ and then $y_5\in I_5$ with $g(y_5)=y_4$.
        Thus $y_5\in I_5\cap R_g(x)$. The same argument applies to all odd $i$.
  \item Let $i$ be even. Since $i-1$ is odd, by (1) there exists
        $y_{i-1}\in I_{i-1}\cap R_g(x)$. As $g(I_i)=I_{i-1}\cup I_{i+1}\cup E_2$,
        we can write $y_{i-1}=g(y_i)$ for some $y_i\in I_i$. Since
        $v_{i-1}\in r(e_i)$, the condition holds.
  \item Let $i$ be even. Because $g(I_i)=I_{i-1}\cup I_{i+1}\cup E_2$, choose
        $y_i\in I_i$ with $g(y_i)=\tfrac{7}{2}\in E_2$. Then
        $y_i\in I_i\cap R_g(x)\cap g^{-1}(E_2)$.
  \item This item is vacuous.
\end{enumerate}
Therefore, $\nu_x=\nu_{5/2}$ is injective.
\end{example}




\medskip

\section{Declarations}

\subsection*{Ethical Approval:}

This declaration is not applicable.

\subsection*{Conflicts of interests/Competing interests:} We have no conflicts of interests/competing interests to disclose.

\subsection*{Authors' contributions:}

All authors contributed equally to this work.

\subsection*{Funding:}
The first author would like to thank do Fundação de Amparo à Pesquisa e Inovação do Estado de Santa Catarina (FAPESC) for the essential financial support. The second author was partially supported by Capes-PrInt, Conselho Nacional de Desenvolvimento Cient\'ifico e Tecnol\'ogico (CNPq) - Brazil, and Funda\c{c}\~ao de Amparo \`a Pesquisa e Inova\c{c}\~ao do Estado de Santa Catarina (FAPESC).

\subsection*{Data Availability and materials:} Data sharing not applicable to this article as no datasets were generated or analysed during the current study.





\end{document}